\newtheorem{theorem}{Theorem}
\newtheorem{lemma}[theorem]{Lemma}
\newtheorem{proposition}[theorem]{Proposition}
\newtheorem{corollary}[theorem]{Corollary}
\newtheorem{assumption}{Assumption}
\newtheorem{definition}{Definition}
\newcommand{\bR}{\mathbb{R}}
\newcommand{\cE}{\mathcal{E}}
\newcommand{\cJ}{\mathcal{J}}
\newcommand{\cL}{\mathcal{L}}
\newcommand{\cV}{\mathcal{V}}
\newcommand{\fA}{\,\forall\,}
\newcommand{\dS}{\displaystyle}
\newcommand{\bq}[1]{{\left[#1\right]}}
\newcommand{\bp}[1]{{\left(#1\right)}}
\newcommand{\vphi}{\varphi}
\renewcommand{\O}{\Omega}
\newcommand{\cG}{\mathcal{G}}
\newcommand{\eP}{\varepsilon}
\newcommand{\pp}{\partial}
\newcommand{\rd}{\mathrm{d}}
\newcommand{\ddx}[1]{{\frac{\rd }{\rd #1}}}
\newcommand{\ppx}[1]{{\frac{\partial }{\partial #1}}}
\newcommand{\anatoly}[1]{{\color{black}#1}}
\begin{document}
%
\title{Monotonicity Properties of Physical Network Flows and Application to Robust Optimal Allocation}
%
%
%

\allowdisplaybreaks

\author{Sidhant~Misra,
        Marc~Vuffray,
        and~Anatoly~Zlotnik
        \thanks{S. Misra, M. Vuffray and A. Zlotnik are with Los Alamos National Laboratory, Los Alamos, NM 87545. Email: \{ sidhant $\mid$ vuffray $\mid$ azlotnik \}@lanl.gov.}
 }

\newtheorem{thm}{Theorem}
\newtheorem{lem}[thm]{Lemma}
\newtheorem{prop}[thm]{Proposition}
\newtheorem{cor}[thm]{Corollary}

\newtheorem{df}{Definition}
\newtheorem{nt}{Notation}
\newtheorem{ex}{Example}
\newtheorem{cns}{Construction}
\newtheorem{goal}{Goal}
\newtheorem{rem}{Remark}
\newtheorem{alg}{Algorithm}

\maketitle

\begin{abstract}
We derive conditions for monotonicity properties that characterize general flows of a commodity over a network, where the flow is described by potential and flow dynamics on the edges, as well as potential continuity and Kirchhoff-Neumann mass balance requirements at nodes.  The transported commodity may be injected or withdrawn at any of the network nodes, and its movement throughout the network is controlled by nodal actuators.  For a class of dissipative nonlinear parabolic partial differential equation (PDE) systems on networks, we derive conditions for monotonicity properties in steady-state flow, as well as for propagation of monotone ordering of states with respect to time-varying boundary condition parameters.  In the latter case, initial conditions, as well as time-varying parameters in the coupling conditions at vertices, provide an initial boundary value problem (IBVP).  We prove that ordering properties of the solution to the IBVP are preserved when the initial conditions and the parameters of the time-varying coupling law are appropriately ordered.  Then, we prove that when monotone ordering is not preserved, the first crossing of solutions occurs at a network node.  We consider the implications for robust optimization and optimal control formulations and real-time monitoring of uncertain dynamic flows on networks, and discuss application to subsonic compressible fluid flow with energy dissipation on physical networks. The main result and monitoring policy are demonstrated for gas pipeline test networks and a case study using data corresponding to a real working system.  We propose applications of this general result to the control and monitoring of natural gas transmission networks.
\end{abstract}


\section{Introduction} \label{secintro}

The optimal allocation of commodity flows over networks has been studied from theoretical and computational perspectives since the early work of Ford and Fulkerson, which focused on maximal utilization of capacity and minimization of economic cost in the steady state \cite{ford58b,ford62,ford56,ford58a}.  Subsequent focus has been on maximum profit, or alternatively minimum loss, network flow problems that aim to maximize economic welfare for users of the network by delivering the maximum amount of flow (of a commodity) from sources to terminals.  Prominent algorithms in operations research have particular importance for transportation problems \cite{gass90}, which may involve commodities such as vehicles \cite{ma04,tjandra03phd,como10,10CSADF,13Var_a,13Var_b}, natural gas \cite{osiadacz87,wong68,misra15,luongo91,riosmercado15}, energy \cite{geidl07}, electric power  \cite{Kirchhoff1847,98Bol}, and information \cite{dapice08}.  

Formally, these problems are constructed by extending the standard network flow setting (see e.g. \cite{89AMO,95AMOR} and references therein), with additional physical constraints, introducing nodal potentials, and relating the potential drop along an edge of the network to a function of the flow. Thus, in the case of fluid flows through pipeline in the steady-state regime, the potentials are squared pressures (which are related bijectively to density), and the drop in squared pressure is a bilinear function of the flow and the flow amplitude with a term related to compression \cite{osiadacz87,12BNV,misra15}.

The difficulty of network flow problems is amplified when the flows are unbalanced, i.e., when commodity inflows at origins and outflows at destinations are time-dependent \cite{gottlich05}.  Such situations arise in air traffic flow \cite{ma04}, telecommunications networks \cite{dapice08}, and other flow problems that require dynamic modeling and control design \cite{herty03,como10}.  The number of constraints and decision variables increases by a factor directly related to the temporal complexity of commodity inflows and outflows.  The dynamics are then characterized by systems of ordinary or partial differential equations (ODEs or PDEs), which represent fluid flow or the aggregated motion of discrete particles.  In this context, optimization requires incorporating differential constraints rather than purely algebraic ones, for example in vehicle traffic \cite{gugat05} and gas pipeline flows \cite{steinbach07pde}.

\anatoly{The computational tractability of optimizing dynamic network flows is further challenged by the presence in practice of uncertainty in the volume and timing of the variable commodity inflows and outflows.  In the case of continuous dynamic flows, uncertainty in constant or time-varying functional system parameters, e.g., network inflows and outflows, requires a continuum of constraints to ensure feasibility of the optimization solution.  The area of monotone control systems \cite{angeli03,angeli04} has provided mathemetical mechanisms with which the resulting semi-infinite optimal control problems can be simplified and made tractable for computation.  Developments over the past decade have facilitated stability analysis for systems with monotone order propagation properties \cite{lovisari14}, and enabled robust control in applications including automation of building ventilation systems \cite{meyer13}. Monotonicity properties have been invoked to tractably optimize steady-state fluid flows over networks using variational approaches \cite{dvijotham15,vuffray15cdc}, when the system state can be shown to have a monotone ordering with respect to certain input parameters.  Crucially, this property was shown to enable significant simplification of robust optimization formulations, in particular for distributed flows on large-scale networks.}

\anatoly{In this paper, we begin with a review of the major applications that motivate the robust optimization of uncertain physical flows on networks, as well as of the mathematical developments that lead up to the state of the art.  We then unify results on monotonicity properties for generalized dissipative network flows in the steady state, monotone order propagation properties for transient (unsteady) flows defined by PDEs on graph edges, and preservation of monotone order for spatial discretization schemes of the PDE dynamics.}  Specifically, we show that the steady-states have a monotone ordering with respect to input parameters of practical interest, and in each case derive conditions required for monotonicity properties to hold.  The steady state result is shown to be a special case of the dynamic result, and we also review the connection to a set-theoretic proof \cite{vuffray15cdc}.  Crucially, this property was shown to enable significant simplification of robust optimization formulations, in particular for distributed flows on large-scale networks.  We show that monotonicity renders the adjustable robust maximum profit problem tractable, in the sense that instead of enforcing infinitely many conditions, associated with all possible realizations of the uncertain variables, it is sufficient to only account for two extremal conditions correspondent to every uncertain variable (withdrawal from the network) to greedily maximize/minimize their values. 

We then derive results on the propagation of monotone order properties for systems of nonlinear parabolic PDEs on metric graphs, following our previous study \cite{misra16mtns}.  PDEs with a general nonlinear dissipation term define state evolution on each edge, and balance laws create Kirchhoff-Neumann boundary conditions at the vertices.  We first suppose that initial conditions, together with time-varying parameters that characterize coupling conditions at vertices, provide a well-posed initial boundary value problem (IBVP).  Our main result is a theorem that establishes preservation of monotone ordering properties of the solution to the IBVP when the initial conditions and time-varying coupling law parameters at vertices are appropriately ordered.  Furthermore, we prove that when monotone ordering is not preserved, the first crossing of solutions occurs at a graph vertex.  

In addition, we show that the same conditions for propagation of order hold when the PDE system on a network is discretized in space.  Establishing this property is required to guarantee the same properties in computational implementations of robust optimal network control problems for the examined PDE system, in the presence of uncertainty in nodal commodity withdrawals.  To derive the result, we use the notion of a monotone parameterized control system \cite{zlotnik16ecc}, to which we apply the standard Kamke conditions \cite{kamke32,hirsch05} in order to establish monotonicity with respect to parameter functions.  Lumped-element approximation is used to discretize the dissipative PDEs on network edges as ODE systems, to which existing monotone systems theory can be applied.  Similar to the result for the undiscretized system, we give conditions for the state when the commodity density anywhere in the network can only increase monotonically when any commodity injection is increased.  

\anatoly{The derived monotonicity properties are intended to aid in tractable re-formulation of canonical problems in the mathematical optimization of pipeline transport.  Such problems involve selection of actuator control protocols and/or a subset of injections and withdrawals to optimize an economic or operational cost objective subject to nodal commodity withdrawal limits and bounds on the nodal potentials, with uncertainty in another subset of flows.   When the potentials are monotone functions of the withdrawals, the infinite collection of constraints that enforce the potential limits can be satisfied if the two bounds for the minimum and maximum values of the uncertainty interval are satisfied.   The derived properties are used to compactly formulate tractable robust optimization and optimal control formulations for network flow systems under uncertainty, and to synthesize local feedback policies that maintain monotone state ordering, and can be used to guarantee state feasibility in real-time pipeline operations.}

\anatoly{In order to demonstrate the direct relevance of our results to an application of broad and growing interest, we also present the outcomes of several numerical computations that demonstrate the property that is embodied in our main theorem.  For the motivating application of gas pipeline flow control, we verify the result by confirming the monotone ordering of time-varying physical flow and pressure states computed by simulations of the network dynamics given various monotone ordered time-varying independent (input) parameter functions at the network boundaries.  We consider the examples of a single pipe, a small test network with several compressors, and finally a case study synthesized from a sub-section of capacity planning model of actual pipeline system as well as measurement time-series obtained from its supervisory control and data acquisition (SCADA) system.  }


The manuscript is organized as follows.  \anatoly{In Section \ref{sec:background} we describe the motivating engineering applications, physical modeling, and mathematical developments that inspire this study.} In Section \ref{sec:formulation}, we formulate actuated commodity flows through dissipative transport networks as a class of nonlinear parabolic PDE systems over a collection of domains that form a graph when coupled by Kirchhoff-Neumann boundary conditions.  In Section \ref{sec:discrete_dynamics}, we derive a lumped-element spatial discretization of the continuum dynamics, in which the network is refined and a collection of ODEs that represent nodal density dynamics are obtained.  In Section \ref{sec:result}, we formulate the required assumptions, and state the main results on monotonicity in the theorems on (i) steady state, (ii) monotone order propagation and crossing point conditions for solutions to the PDE system, and (iii) monotone ordering of solutions to the discretized system. We provide formal proofs of the three theorems in Appendices \ref{sec:proof_ss}, \ref{sec:proof_pde}, and \ref{sec:proof_ode}.   Then in Section \ref{sec:discussion} we discuss several properties including uniqueness of steady-state network flow solutions, application to potential difference systems, relationships between the three models and associated theorems, and the application to robust optimal control problems and monitoring (feedback) policies for uncertain dynamic flows on networks. \anatoly{In Section~\ref{sec:example} we  present the outcomes of several computational studies that verify our results for small test networks and a model that represents a working physical system.}  We conclude in Section \ref{sec:conc}.


\anatoly{
\section{Motivation and Background} \label{sec:background}

The theoretical analysis of monotone system properties has largely been driven by optimal transportation problems \cite{como10,vuffray15cdc}.  We first review the applications and how they guide the mathematical setting, and then thoroughly review the relevant mathematical background in the areas of robust optimization, network science, and monotone control systems.


\subsection{Gas Pipeline Systems} \label{subsec:gaspipeflows}

Our main motivation is optimization and control of the complex engineered systems designed for physical transport of natural gas over pipeline networks.  The basic structure of such systems consists of network edges on which a physical flow can be characterized by PDE equations that represent mass and momentum conservation laws.  The edges are connected at nodes where flow balance laws and pressure or density compatibility conditions are additional physical properties.  The flow of compressible gas throughout the network is caused by actuators that can be modeled as nodal or node-connecting. We provide a brief overview of modeling and distinctive physical and mathematical characteristics of these systems, focusing mainly on the conservation laws.  The network modeling is standard, and we define it formally in Section \ref{sec:formulation}.

Gas flow in a pipe is described by the partial differential equation (PDE) system of conservation laws for mass, momentum, and energy in one dimension together with Darcy’s law, which relates mass flow and pressure changes in time and along a pipe segment \cite{wylie78,osiadacz87}.  For large-scale pipeline systems, where most of the pipes are buried underground, the gas temperature is typically the same as that of the ground, except for pipeline sections that are directly connected to the outlet of a compressor.   Therefore in practice the compressibility factor depends on local (in space) pressure and not significantly on temperature, and the analysis of non-isothermal processes may be neglected for the majority of practical cases [9].  It is therefore standard to consider an isothermal process for which the energy conservation law is not required.  The classic transient flow equations for gas flow in a single pipe are 
\begin{subequations} \label{eq:gaspde0}
\begin{align}
    \frac{\partial \rho}{\partial t} + \frac{\partial (\rho u)}{\partial x} & = 0 \label{eq:gaspde0a} \\
    \frac{\partial (\rho u)}{\partial t} + \frac{\partial (p + \rho u^2)}{\partial x} & = - \frac{\lambda}{2D}\rho u |u| - \rho g \frac{d h}{d x} \label{eq:gaspde0b} \\
    p = \rho ZRT & = c_s^2 \rho \label{eq:gaspde0c}
\end{align}
\end{subequations}
Equations \eqref{eq:gaspde0} represent mass conservation, momentum conservation, and the gas equation of state law.  The state variables $u$, $p$, and $\rho$ represent gas velocity, pressure, and density, respectively, and depend on time $t\in[0,T_0]$ and space $x\in(0,L)$, where $T_0$ is a finite time horizon and where $L$ is the length of the pipe, and the variable $h$ gives the elevation of the pipeline.  The dimensionless parameter $\lambda$ is the friction factor that scales the phenomenological Darcy-Weisbach term, which quantifies the momentum loss caused by turbulent friction.  Other parameters are the internal pipe diameter $D$, and the wave (sound) speed  $c_s=\sqrt{ZRT}$ in the gas where $Z$, $R$, and $T$ are the gas compressibility factor, specific gas constant, and absolute temperature, respectively.  Here $dh/dx=\sin(\theta(x))$ where $\theta$ is the angle of the pipe relative to the horizontal, and $g$ is constant acceleration caused by gravity.  In general, the gas compressibility  $Z=Z(p,T)$ depends on pressure and temperature and varies substantially within the physical regime seen in high pressure transmission pipelines.  The term $\partial(\rho u)/\partial t$ in equation \eqref{eq:gaspde0b} represents kinetic energy and the $\partial(\rho u^2)/\partial x$ term represents inertia.  It is standard to apply the transformation of variables by defining the mass flow $\phi=S\rho u$, where $S$ is the cross sectional area of the pipe.  A range of assumptions are made depending on the analysis setting of interest. The common baseline assumptions for gas transmission pipelines are: gas flow is an isothermal process; all pipes are horizontal and have uniform diameter and internal surface roughness; flow is turbulent and has high Reynolds number; and the flow process is adiabatic, i.e. there is no heat exchange with ground. With these assumptions, the coefficients $R$, $T$, $D$, and $\lambda$ can be approximated by constants, and the equations \eqref{eq:gaspde0} can be reduced to
\begin{subequations} \label{eq:gaspde1}
\begin{align} 
    \frac{\partial \rho}{\partial t} + \frac{1}{S}\frac{\partial \phi}{\partial x} & = 0 \\
    \frac{1}{S}\frac{\partial \phi}{\partial t} + \frac{\partial p}{\partial x} & = - \frac{\lambda}{2D}\frac{\phi|\phi|}{S^2\rho} \label{eq:gaspde1:b} \\
    p & = Z(p,T)RT \rho.  \label{eq:gaspde1:c} 
\end{align}
\end{subequations}
There exists a consensus in the literature regarding these initial assumptions. Here we address two subtle modeling points.  There is some controversy about whether it is acceptable to omit the inertial term $\partial \phi/\partial t$ in \eqref{eq:gaspde1:b}, which is often done for mathematical convenience in optimization of transient pipeline flows.  In general, the term can be neglected when the transients are slow, as shown in empirical studies \cite{osiadacz84,gyrya19}.  In order to prove our main result, we must omit the inertial term, and we will show using empirical simulations in Section \ref{sec:example} that our main theorem applies in the regime of slow transients but not when transients are fast.  

A more critical issue is the typical assumption in the majority of gas pipeline optimization studies to assume the ideal gas equation of state, where the gas compressibility $Z$ is a constant.  However for high pressure transmission systems, the gas state becomes highly non-ideal.  The compressibility factor of pipeline quality natural gas, which consists of 95\% methane and 5\% ethane and other longer chain hydrocarbons, can vary between $Z\approx 1$ (at under 2 MPa) to $Z\approx 0.8$ (at over 8 MPa).  This variation occurs throughout time and space and is not known a priori, so that substituting for $p$ in equation \eqref{eq:gaspde1:b} using equation \eqref{eq:gaspde1:c} with constant wave speed significantly changes the solution behavior either for the solution to an IBVP or an optimization problem.

However, we note that equation \eqref{eq:gaspde1:c} can be reformulated when $R$ and $T$ are constant into the form $p=\gamma(\rho)$, where $\gamma$ is a bijective mapping with $\gamma(0)=0$ and derivative satisfying $\gamma'>0$.  Taking the derivative with respect to space results in $\partial_x p=\gamma'(\rho)\cdot \partial_x \rho$. We may then drop the inertial term from equation \eqref{eq:gaspde1:b}, and solve for $\phi$ in the form $\phi=F(t,\rho,\partial_x p)= F(t,\rho,\gamma'(\rho) \partial_x \rho)$, which by the implicit function theorem can in turn be rewritten in the form $\phi=f(t,\rho,\partial_x \rho)$.  In the regime of slow transients and the additional assumptions above, this form of the momentum conservation equation provides a sufficiently accurate representation, and thus we use it as the form of the generalized dissipative relation for gas flow in equation \eqref{eq:in_dissipation_eq} in Section \ref{sec:full_dynamics} and henceforth.

The flow of natural gas through pipelines is propelled by gas compressors, which are of either the centrifugal turbine or reciprocating pump type \cite{mokhatab2012handbook}.  In large gas transmission systems, multiple compressor machines are located at large compressor stations with possibly complex connection topologies \cite{koch15}.  For the purpose of our study, we model such stations as nodal elements that augment gas density between the nodal value and the value at the boundary of a network edge.

The state of the art of modeling, analysis, and computational methods for gas pipeline systems is highly advanced for IBVPs and steady-state optimization.  Recent studies have highlighted the challenge of model predictive optimal control of these complex network systems \cite{zlotnik15cdc,jalving18,zlotnik19cdc}.  The transition of such methods to practice remains an open challenge because of model complexity and parameter uncertainty, which makes tractable optimization problematic.  The main result of our present study provides a powerful theoretical tool, which we demonstrate can addresses this practical challenge.    

\subsection{Additional Physical Network Flow Systems} \label{subsec:physicalflows}

There are additional engineering systems designed for physical transport of commodities over networks, which are structurally similar to gas pipelines, but where the physics of flow on edges are different and the fluid mechanical properties of the flow actuators are more complex.  These include pipeline systems that transport crude petroleum and processed petroleum products, water, or other liquids.

Networks that are used to transport water are analyzed using very similar equations as gas pipeline flows, with the exception that the density is assumed to be constant to account for incompressibility of the fluid \cite{tasseff16}.  Transients for such systems are complex and quickly changing boundary conditions can exhibit water hammer effects \cite{saikia2006simulation}.  Analyses are typically therefore conducted in the steady-state \cite{sarbu1998energetic}, or with the assumption of steady-state flows on a sequence of successive intervals \cite{vrachimis2018real}.  The objective in optimization can be to optimize pumping energy usage \cite{sarbu1998energetic}, as in water distribution pipelines, or to re-allocate flows in open channels to react to flooding or other incidents \cite{tasseff16}.  The modeling of pumping machinery for such systems can become complex as well \cite{georgescu2014estimation}.

For the transport of petroleum and other weakly compressible liquids, an assumption of constant density is also usually made \cite{pharris2008argonne,rizwan2013crude}. Optimization is similarly done by assuming a sequence of intervals that feature steady-state flows \cite{losenokov2017optimization}.  The engineering considerations of pumping systems that actuate flows throughout the system and the resulting effect on physical flows on network edges are critically important. The complex nonlinear relationships between flow rate, rotational drive shaft frequency, head difference, and thermal effects between inlet and outlet of variable frequency drive electric pumps that are predominantly used for petroleum transport are problematic for optimization \cite{rejowski2008novel,12Grishin}, but must be considered to appropriately represent system functions.

Many of the relationships between physical variables and control parameters in commodity transport networks exhibit monotone ordering.  While we focus here on the application to gas pipelines, other network systems provide a rich variety of properties to explore optimization reformulations aided by monotone system theory.


\subsection{Monotonicity and Robust Optimization of Network Flows} \label{subsec:monotonicitybackground}

A significant challenge to computational tractability for practical management of network flows arises through the presence of uncertainty in the volume and timing of the variable commodity inflows and outflows.}  In such settings, it is desirable to extend canonical problems to robust formulations, or more accurately ``adjustable robust optimization'' problems following the terminology commonly accepted in the literature on robust optimization \cite{03BS,ben2009robust,13BNS,13BG, bental98,bertsimas03}.  The robust optimization network flow model considered involves three different types of variables: the uncertain parameters, the non-adjustable variables, and the adjustable variables. The uncertain parameters express information that is not certain, i.e. available for the optimization decision only in the form of allowed range. The non-adjustable variables represent the ``here and now'' decision in the system. Their values should be feasible for any realization of the uncertain parameters from the allowed range. Finally the adjustable variables represent the ``wait and see'' decisions. Their values are adaptable to a particular values of the uncertain parameters.  The adjustable robust optimization is composed of a test of robust feasibility and an optimization procedure. A value of the non-adjustable variables is said to be {\em robust feasible} if for any acceptable configuration of the uncertain variables there exists feasible values of the adjustable variables. Then the optimization procedure consists of finding a robust feasible protocol for setting the non-adjustable variables such that the objective function of overall economic welfare for users of the system is maximized.
Uncertainty can be handled by enforcing a robust feasibility constraint, which is in essence an intersection of infinitely (and possibly uncountably) many constraints, one corresponding to each allowed value of the uncertain parameters. This results in the so-called semi-infinite program \cite{Hettich93}. In the robust optimization literature, the ways to handle these constraints can be classified into three different categories. First, when the constraints and the uncertainty set have special structure, e.g., linear constraints and ellipsoidal uncertainty set, it is possible to use duality theory to represent the infinitely many constraints with one single  dual feasibility constraint \cite{BentalNemirovski98,Nemirovski99,BenTal00}. This category of formulations also includes approximations and relaxations of more complicated uncertainly sets and/or constraints with simpler sets that are amenable to the application of duality theory.  The second category/approach is similar to the so-called ``scenario based'' approach, where  a (possibly random) sampling of the uncertain parameters is performed, and the feasibility constraint corresponding to each sampled parameter is included in the optimization formulation \cite{Bertsimas07, Boyd09}. The quality of the solution thus obtained depends on the number of samples used and also on how the samples were chosen. The third case, which is the approach taken in this manuscript, is when one can analytically or numerically identify the ``extreme-cases'', i.e, find the subset of values of the uncertain parameters that can violate the feasibility constraints. When this subset is finite, or has a finite representation, the robust feasibility constraint again reduces to a finite number of standard constraints. Examples where this strategy is used are scarce. (See \cite[pp. 388]{Boyd09} for a discussion on the topic.)

In the case of continuous dynamic flows, uncertainty in constant or time-varying functional system parameters, e.g., network inflows and outflows, requires a continuum of constraints to ensure feasibility of the optimization solution.  As in the steady-state case, the challenge becomes to similarly show that feasibility for a finite number of appropriate scenarios will guarantee feasibility for an entire such uncountable ensemble of constraints.  A recent approach to control uncertain network flows with time-dependent dynamics sidesteps the need for global optimization over a possibly non-convex landscape by examining stability and robustness of distributed routing solutions \cite{como13a,como13b}.  The methodology in these studies was enabled by demonstrating that the dynamics in question were monotone control systems \cite{angeli03,como10,lovisari14,sootla2018operator}.  Such so-called cooperative systems, which possess a monotone order propagation property with respect to certain input variables, were investigated for ordinary differential equation systems \cite{kamke32,hirsch85,smith88,hirsch05}.   The recent discovery of numerous applications has renewed interest in such systems, for example to vehicle routing under uncertainty \cite{como10}, analysis of chemical reaction networks \cite{deleenheer04}, as well as power systems and turbulent jet flows \cite{budivsic12}.  The notion of monotone control systems \cite{angeli03,angeli04} has also facilitated stability analysis for systems with monotone order propagation properties \cite{lovisari14}, and enabled robust control in applications including automation of building ventilation systems \cite{meyer13}.  Several results on the propagation of order properties for stochastic systems exist as well \cite{sootla14b}.

Previous studies on monotone dynamical systems have largely focused on monotone order propagation properties of ordinary differential equations (ODEs) \cite{hirsch05}, and applications involving representations of fluid flow or the aggregated motion of discrete particles were examined with ODE models \cite{hirsch05,como13a}.  However, control and optimization approaches for systems represented by PDE dynamics could benefit significantly from monotone systems concepts, in particular control of fluid flows on networks \cite{steinbach07pde,zlotnik15cdc} and quantum graphs \cite{arioli16}.  Studies that have used monotonicity properties to optimize steady-state fluid flows over networks using variational approaches \cite{dvijotham15,vuffray15cdc} demonstrate that the steady-states have a monotone ordering with respect to certain input parameters.  Crucially, this property was shown to enable significant simplification of robust optimization formulations, in particular for distributed flows on large-scale networks.  

The need to develop robust optimal control formulations for emerging applications involving uncertain dynamic flows on networks motivates investigation of monotone order propagation properties for PDE systems as well as the associated discretization schemes.  The approximation of a diffusive PDE operator by an ODE system and derivation of order propagation properties using the established ODE theory has been suggested for basic reaction-diffusion problems \cite{deleenheer04, enciso06}.  Otherwise, monotone operators have been examined primarily in the context of existence and approximations of solutions to nonlinear PDE systems \cite{quaas08,briani12,showalter13}.  In the following exposition, we formalize and unify the notations used to study monotone system properties for physical network flows.


\section{Parabolic PDE Systems on Metric Graphs} \label{sec:formulation}
We consider a metric graph $\Gamma=\left(\cV,\cE,\lambda\right)$ where $\cV$ is the set of vertices and $\cE\subset \cV \times \cV$ is the set of directed  edges $(i,j)\in\cE$ that connect the vertices $i,j\in\cV$.  Here $\lambda:\cE\to\bR_+$ is a metric on the edges, where $\bR_+$ denotes the non-negative real numbers. Let the incoming and outgoing neighborhoods of $j\in \cV$ be denoted by $\partial_{+}j$ and $\partial_{-}j$, respectively.  These sets are defined as
\begin{align}
\partial_{+}j&=\left\{ i\in \cV\mid(i,j)\in \cE\right\} \\
\partial_{-}j&=\left\{ k\in \cV\mid(j,k)\in \cE\right\}.
\end{align}
Every edge $(i,j)\in \cE$ is associated with a spatial dimension on the interval $I_{ij}=[0,L_{ij}]$, where $L_{ij}=\lambda(i,j)>0$ is interpreted as the edge length defined by the metric $\lambda$.  We let $V=|\cV|$ and $E=|\cE|$ denote the number of vertices and of edges, respectively.

\subsection{Full System Dynamics} \label{sec:full_dynamics}
The state of the network system is characterized within each edge $(i,j)\in\cE$  by space-time dependent variables corresponding to flow $\phi_{ij}:[0,T]\times I_{ij}\rightarrow\mathbb{R}$ and non-negative density $\rho_{ij}:[0,T]\times I_{ij}\rightarrow\mathbb{R}_{+}$. In addition, every vertex $i\in \cV$ is associated with a time-dependent {internal} nodal density $\rho_{i}:[0,T]\rightarrow\mathbb{R}_{+}$ and is subject to
a time-dependent flow injection $q_{i}:[0,T]\rightarrow\mathbb{R}$.

We suppose that the density and flow dynamics on the edge  $(i,j)\in\cE$ evolve according to the generalized dissipative relations,
\begin{align}
\dS \pp_t\rho_{ij}(t,x_{ij})+\pp_x\phi_{ij}(t,x_{ij}) & =  0, \label{eq:in_continuity} \\
\phi_{ij}(t,x_{ij})+f_{ij}(t,\rho_{ij}(t,x_{ij}), \partial_{x}\rho_{ij}(t,x_{ij})) & =0,  \label{eq:in_dissipation_eq}
\end{align}  
which are called respectively the continuity and momentum dissipation equations. The functions $f_{ij}(t,u,v):[0,T]\times \mathbb{R}_{+} \times \mathbb{R} \rightarrow \mathbb{R}$ are called dissipation functions and we assume that they are increasing in their last argument.



Next, we establish nodal relations that characterize the boundary conditions for the flow dynamics \eqref{eq:in_continuity}-\eqref{eq:in_dissipation_eq} on each edge of the graph.  For this purpose, in order to simplify notation we define the edge boundary variables
\begin{align}
\underline{\rho}_{ij}(t)\triangleq\rho_{ij}(t,0), \quad \overline{\rho}_{ij}(t)\triangleq\rho_{ij}(t,L_{ij}), \label{eq:end_p_def} \\
\underline{\phi}_{ij}(t)\triangleq\phi_{ij}(t,0), \quad \overline{\phi}_{ij}(t)\triangleq\phi_{ij}(t,L_{ij}). \label{eq:end_q_def}
\end{align}
At each vertex $i\in V$ the flow and density values at the endpoints of adjoining edges must satisfy certain compatibility conditions.  First, a Kirchhoff-Neumann property of flow conservation is ensured through nodal continuity equations
\begin{align}
q_j(t)+\sum_{i\in\partial_{+}j}\overline{\phi}_{ij}- \sum_{k\in\partial_{-}j}\underline{\phi}_{jk}=0, \quad \fA j\in\cV. \label{eq:in_nodal_continuity} 
\end{align}
In addition, we include compatibility conditions that relate nodal densities to boundary conditions on edges.  
For each edge $(i,j)\in\cE$, the corresponding nodal conditions are
\begin{align}
\underline{\rho}_{ij}(t) = \underline{\alpha}_{ij} (t,\rho_{i}(t)), \quad
\overline{\rho}_{ij}(t)  = \overline{\alpha}_{ij}(t,\rho_{j}(t)), \label{eq:in_pressure_comp} 
\end{align}
where the compatibility functions $\underline{\alpha}_{ij}(t,\rho)$ and $ \overline{\alpha}_{ij}(t,\rho)$ are monotonically increasing functions in $\rho$ for all $t\in[0,T]$ and $\rho>0$.
The functions $\rho_{i}$ are auxiliary variables that denote internal nodal density values.  The above compatibility conditions are visualized in Figure \ref{fig:compatibility}.

We suppose that instantaneous state of the system at time $t=0$ is specified by initial density and flow profiles
\begin{align}
\!\!\! \rho_{ij}(0,x)=\rho_{ij}^{0}(x), \,\, \phi_{ij}(0,x)=\phi_{ij}^{0}(x), \quad \fA (i,j)\in\cE. \label{eq:in_initial_condition}
\end{align}

\begin{figure}[t]
\centering
\includegraphics[width=.95\linewidth]{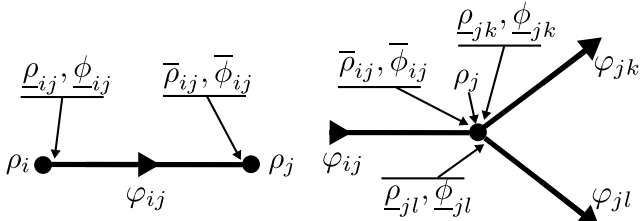} \vspace{-2ex} \caption{Nodal densities $\rho_j$ and boundary variables $\underline{\rho}_{ij}$, $\underline{\phi}_{ij}$, $\overline{\rho}_{ij}$, and $\overline{\phi}_{ij}$, and compatibility functions $\underline{\alpha}_{ij}$ and $\overline{\alpha}_{ij}$
for an edge (left) and a joint (right).
} \label{fig:compatibility}
\vspace{-2ex}
\end{figure}
With the above setting, we obtain an initial boundary value problem (IBVP) where the initial conditions are given by \eqref{eq:in_initial_condition}.

\subsection{Steady State} \label{sec:steady_state}
The steady state of the system described in Section~\ref{sec:full_dynamics}, assuming it exists, can be obtained by removing the time dependence and setting the terms with time derivatives in the PDEs to zero. Setting $\dS \pp_t\rho_{ij}(t,x_{ij})$ to zero in \eqref{eq:in_continuity} we get $\pp_x\phi_{ij}(t,x_{ij}) = 0$, which implies that the flow $\phi_{ij}(t,x_{ij})$ is constant over an edge. We can define the corresponding simplified steady state system states as density $\rho_{ij}(x_{ij})$ and flow $\phi_{ij}$ (where the dependence on $x_{ij}$ is no longer required and thus removed). 
The steady state is described by the following system of equations:
\begin{subequations} \label{eq:steady_state}
\begin{align}
    &\phi_{ij} = f_{ij}(\rho_{ij}(x_{ij}), \partial_{x}\rho_{ij}(x_{ij}) ) \quad \fA (i,j) \in\cE \label{eq:ss_dissipation} \\
    &q_j+\sum_{i\in\partial_{+}j}{\phi}_{ij}- \sum_{k\in\partial_{-}j}{\phi}_{jk}=0, \quad \fA j\in\cV  \label{eq:ss_flow_conservation} \\
    &\underline{\rho}_{ij} = \underline{\alpha}_{ij} (\rho_{i}), \quad
\overline{\rho}_{ij}  = \overline{\alpha}_{ij}(\rho_{j})    \quad \fA (i,j) \in\cE,\label{eq:ss_compressor}
\end{align}
\end{subequations}
where $\underline{\alpha}_{ij}(.)$ and $\overline{\alpha}_{ij}(.)$ are monotonically increasing functions for all $(i,j) \in \cE$. 

\section{Discretized System Dynamics} \label{sec:discrete_dynamics}
For applications involving the system described in Section~\ref{sec:full_dynamics} that require computational simulation and/or optimal control, it is necessary to obtain a finite representation of the PDEs by performing discretization in space and time. In this section, we present the system of coupled ODEs obtained by using a lumped element approximation on the PDEs.

We use a lumped element approximation \cite{heydweiller1977dynamic,grundel13a} to characterize edge dynamics \eqref{eq:in_continuity} and \eqref{eq:in_dissipation_eq}, with nodal conditions \eqref{eq:in_nodal_continuity} and \eqref{eq:in_pressure_comp} and subject to injection profiles $q_{i}(t)$,  which approximately defines the state on the network in terms of nodal densities $\rho_j(t)$. Our approach is to add enough nodes to the network so that density and flow are nearly uniform on any given segment. 
In particular, we obtain dynamic equations where the state is represented by the vector of nodal densities $\rho=(\rho_1,\ldots,\rho_V)$.  We begin with the following definition.

\begin{df}[Spatial Graph Refinement] \label{def:graphref} The refinement $\hat{\cG}_{\eP}=(\hat{\cV}_{\eP},\hat{\cE}_{\eP},\hat{\lambda}_{\eP})$ of a weighted oriented graph $\cG=(\cV,\cE,\lambda)$ is made by adding nodes to $\cV$ to sub-divide edges of $\cE$ where the length $\hat{L}_{ij}\in\hat{\cL}_{\eP}$ of a new edge $(i,j)\in\hat{\cE}_{\eP}$ satisfies
\begin{align}
\frac{\eP L_{\mu(ij)}}{\eP+L_{\mu(ij)}}<\hat{L}_{ij}<\eP,
\end{align}
where $\mu:\hat{\cE}\to\cE$ is an injective map of refined edges to the parent edges in $\cE$.
\end{df}

\begin{rem} Spatial graph refinement preserves the structure of the network represented by the graph, and can finely discretize the coupled one-dimensional domains on which the network dynamics \eqref{eq:in_continuity}-\eqref{eq:in_dissipation_eq} with \eqref{eq:in_nodal_continuity}-\eqref{eq:in_pressure_comp} evolve.  For $\eP\ll\min_{{i,j}\in\cE} L_{ij}$, the lengths in $\hat{\cL}_{\eP}$ are nearly uniform and very close to $\eP$.
\end{rem}

\begin{rem} We assume that $\eP$ is small enough so that the relative difference of density and flux at the start and end of each new edge $(i,j)\in\hat{\cE}_{\eP}$ is small.  Specifically,
\begin{align} \label{eq:pres_rel}
2\frac{\overline{\rho}_{ij}(t)-\underline{\rho}_{ij}(t)}{ \overline{\rho}_{ij}(t)+\underline{\rho}_{ij}(t)} \ll 1, \,\, 2\frac{\overline{\phi}_{ij}(t)-\underline{\phi}_{ij}(t)}{ \overline{\phi}_{ij}(t)+\underline{\phi}_{ij}(t)} \ll 1, \,\, \fA t
\end{align}
for the transient regime of interest.  In other words, $\eP$ is sufficiently small so that the relative density difference between neighboring nodes is very small at all times.
\end{rem}

Figure \ref{fig:netconstit} presents an illustration of an edge junction (left picture) and an edge segment (right picture) of a spatial graph refinement $\hat{\cG}_{\eP}$ with  $V_{\eP}=|\cV_{\eP}|$ nodes and $E_{\eP}=|\cE_{\eP}|$ edges of approximate length $\eP$. The variable $q_j$ is an injection into the network at node $j$, and $\overline{\O}_{ij}$ and $\underline{\O}_{jk}$ are  sub-elements corresponding to halves of incoming and outgoing edges $(i,j)$ and $(j,k)$ in $\hat{\cE}_{\eP}$. The flow at the midpoint of an edge is denoted by $\vphi_{ij}=\phi_{ij}(t,\hat{L}_{ij}/2)$. The densities $\underline{\rho}_{ij}$ and $\overline{\rho}_{ij}$ at the ends of the edge $(i,j)\in\hat{\cE}_{\eP}$ are related to the nodal densities $\rho_i$ and $\rho_j$ by Equation \eqref{eq:in_pressure_comp}, as described in Section \ref{sec:formulation}.


\begin{figure}[t]
\centering{
\includegraphics[width=.95\linewidth]{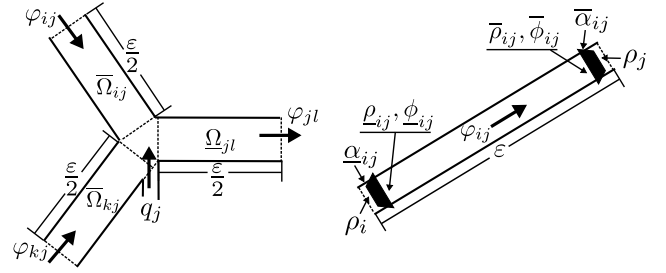} \caption{Lumped elements for discretization of equations \eqref{eq:in_continuity} at a node (left) and \eqref{eq:in_dissipation_eq} over an edge (right). The actuators located between the boundaries of an edge and the adjacent nodes are represented by trapezoids.} \label{fig:netconstit} }
\vspace{-2ex}
\end{figure}

We approximate the rate of change of mass within a nodal element in the refined graph by summing the integrals of mass flux gradient on each adjoining edge segment.  That is,
\begin{align}
&\sum_{i\in\partial_+j}\int_{\overline{\O}_{ij}}\partial_x\phi_{ij}(t,x_{ij}) +\sum_{k\in\partial_-j}\int_{\underline{\O}_{jk}}\partial_x\phi_{jk}(t,x_{jk}) \label{eq:fx_constit1}  \\
&\qquad=  \sum_{i\in\partial_+j}(\overline{\phi}_{ij}-\vphi_{ij}) +\sum_{k\in\partial_-j}(\vphi_{jk}-\underline{\phi}_{jk})  \label{eq:fx_constit2}\\
&\qquad= \sum_{k\in\partial_-j}\vphi_{jk} - \sum_{i\in\partial_+j}\vphi_{ij} -q_j,
\label{eq:fx_constit3}
\end{align}
where the last step is due to the nodal balance condition \eqref{eq:in_nodal_continuity}. Next, applying mass conservation \eqref{eq:in_continuity} to \eqref{eq:fx_constit1} results in
\begin{align}
&\sum_{i\in\partial_+j}\int_{\overline{\O}_{ij}}\partial_x\phi_{ij}(t,x_{ij}) +\sum_{k\in\partial_-j}\int_{\underline{\O}_{ij}}\partial_x\phi_{jk}(t,x_{jk}) \label{eq:pt_constit1}  \\
&\,\, =  -\!\!\sum_{i\in\partial_+j}\int_{\overline{\O}_{ij}}\!\!\partial_t \rho_{ij}(t,x_{ij}) - \!\!\sum_{k\in\partial_-j}\int_{\underline{\O}_{ij}}\!\!\partial_t \rho_{jk}(t,x_{jk})   \label{eq:pt_constit2}\\
&\,\,  \approx -\sum_{i\in\partial_+j}\frac{\eP}{2}\dot{\overline{\rho}}_{ij} -\sum_{k\in\partial_-j}\frac{\eP}{2}\dot{\underline{\rho}}_{jk}
\label{eq:pt_constit3} \\
&\,\, = \!\! -\sum_{i\in\partial_+j}\frac{\eP}{2}\ddx{t}(\overline{\alpha}_{ij}(t,\rho_{j})) -\sum_{k\in\partial_-j}\frac{\eP}{2}\ddx{t}(\underline{\alpha}_{jk}(t,\rho_{j}))
\label{eq:pt_constit4} \\
&\,\, =  -\frac{\eP}{2}\!\!\sum_{i\in\partial_+j}\!\left( \frac{\partial}{\partial t} \overline\alpha_{ij}(t,\rho_j) + \frac{\partial}{\partial \rho}\overline\alpha_{ij}(t,\rho_j) \dot\rho_j   \right) \nonumber \\
&\,\,\quad -\frac{\eP}{2}\!\!\sum_{k\in\partial_-j}\!\left( \frac{\partial}{\partial t} \underline\alpha_{jk}(t,\rho_j) + \frac{\partial}{\partial \rho}\underline\alpha_{jk}(t,\rho_j) \dot\rho_j  \right)  \\
&\,\, = -\frac{\eP}{2}\left( \frac{\partial}{\partial t}\alpha_j(t,\rho_j) + \frac{\partial}{\partial \rho}\alpha_j(t,\rho_j)\dot\rho_j  \right), \label{eq:pt_constit6}
\end{align}
where $\alpha_j(t,\rho_j)$ denotes aggregated actuation at node $j\in\hat{\cV}_{\eP}$,
\begin{align}
\alpha_j(t,\rho_j)= \sum_{i\in\partial_+j} \overline{\alpha}_{ij}(t,\rho_j) + \sum_{k\in\partial_-j} \underline{\alpha}_{jk}(t,\rho_j).
\label{eq:alpha_def}
\end{align}
The approximation in \eqref{eq:pt_constit3} is made by assuming sufficient network refinement \eqref{eq:pres_rel}, and the nodal density relations \eqref{eq:in_pressure_comp} are substituted into \eqref{eq:pt_constit3} to obtain \eqref{eq:pt_constit4}.  We have established equality of \eqref{eq:fx_constit3} and \eqref{eq:pt_constit6}, so solving for $\rho_j$ yields the discretized nodal mass conservation dynamics
\begin{align}
\!\!\!\!\!\! \dot{\rho}_j = & \dS \frac{2}{\eP \frac{\partial}{\partial \rho}\alpha_j(t,\rho_j)}\bq{\sum_{i\in\partial_+j}\!\!\vphi_{ij} \!\!  -  \!\!\!\dS\sum_{k\in\partial_-j}\!\!\vphi_{jk} \!+ q_j} \nonumber  \\ & \,\,\,\, - \frac{\frac{\partial}{\partial t}\alpha_j(t,\rho_j)}{\frac{\partial}{\partial \rho}\alpha_j(t,\rho_j)}, \, \fA j\in\hat{\cV}_{\eP}.  \label{eq:disc_mass_balance}
\end{align}

We now approximate the dissipation equation \eqref{eq:in_dissipation_eq} by evaluating the spatial gradient with a finite difference
\begin{align}
\partial_{x}\rho_{ij}(t,x_{ij}) \approx \frac{1}{\eP}(\overline{\rho}_{ij}-\underline{\rho}_{ij}) = \frac{1}{\eP}(\overline{\alpha}_{ij}(t,\rho_j)-\underline{\alpha}_{ij}(t,\rho_i)), \label{eq:px_grad}
\end{align}
accounting for endpoint actuators as shown at right in Figure~\ref{fig:netconstit}.  Applying \eqref{eq:px_grad} to approximate \eqref{eq:in_dissipation_eq} at nodes yields
\begin{align}
 {\vphi}_{ij} &= \dS -f_{\mu(ij)}\bp{\! t,\overline{\alpha}_{ij}(t,\rho_{j}), \frac{1}{\eP}(\overline{\alpha}_{ij}(t,\rho_j)-\underline{\alpha}_{ij}(t,\rho_i)) \!}, \nonumber \\ & \qquad \qquad \qquad \qquad \qquad \qquad \qquad \qquad  \fA i\in\partial_+j,  \label{eq:disc_diss_eq1} \\
 {\vphi}_{jk} &= \dS -f_{\mu(jk)}\bp{\! t,\underline{\alpha}_{jk}(t,\rho_{j}), \frac{1}{\eP}(\overline{\alpha}_{jk}(t,\rho_k)-\underline{\alpha}_{jk}(t,\rho_j)) \!}, \nonumber \\ & \qquad \qquad \qquad \qquad \qquad \qquad \qquad \qquad \fA  k\in\partial_-j,  \label{eq:disc_diss_eq2}
\end{align}
where $(i,j)$ and $(j,k)$ are used for incoming and outgoing edges at a node, respectively.  Substituting \eqref{eq:disc_diss_eq1}-\eqref{eq:disc_diss_eq2} into \eqref{eq:disc_mass_balance} produces the following purely nodal dynamics:

\begin{align}
 & \dot\rho_j = \frac{2}{\eP} \left(\frac{\partial}{\partial \rho}\alpha_j(t,\rho_j) \right)^{-1} \times \nonumber \\
  & \left[ \sum_{k\in\partial_-j} f_{\mu(jk)}\bp{\! t,\underline{\alpha}_{jk}(t,\rho_{j}), \frac{1}{\eP}(\overline{\alpha}_{jk}(t,\rho_k)-\underline{\alpha}_{jk}(t,\rho_j)) \!}\right. \nonumber \\
  & \,\,\, -\sum_{i\in\partial_+j} f_{\mu(ij)}\bp{\!t,\overline{\alpha}_{ij}(t,\rho_{j}),\frac{1}{\eP}(\overline{\alpha}_{ij}(t,\rho_j)-\underline{\alpha}_{ij}(t,\rho_i))} \nonumber \\
 & \quad \Bigl.+ q_j \Biggr]
 - \frac{\frac{\partial}{\partial t}\alpha_j(t,\rho_j)}{\frac{\partial}{\partial \rho}\alpha_j(t,\rho_j)} , \quad \fA j\in\hat{\cV}_{\eP}.  \label{disceq3}
\end{align}

\begin{rem}{Regularity Assumptions.} \label{rem:mol} First, we note that the ODE system \eqref{disceq3} is defined on the nodes $\hat{\cV}_\eP$ of the $\eP$-refined graph $\cG_\eP$.  We assume that this discretization scheme for the PDE system defined by \eqref{eq:in_continuity}-\eqref{eq:in_dissipation_eq} with \eqref{eq:in_nodal_continuity}-\eqref{eq:in_pressure_comp} is convergent and stable in the sense of a method of lines (MOL) solution.  That is, we suppose that the distance between solutions to \eqref{disceq3} and the classical solution to the PDE system defined by equations \eqref{eq:in_continuity}-\eqref{eq:in_pressure_comp} at locations corresponding to refined network nodes will converge point-wise to zero as $\eP\to 0$.
\end{rem}


\section{Monotone Order Properties} \label{sec:result}
Our main results establish certain monotone order preserving properties for the non-linear parabolic PDE systems described in Section~\ref{sec:formulation}. We derive such results for the system dynamics described in Section~\ref{sec:full_dynamics}, its steady-state in Section~\ref{sec:steady_state}, as well as the system of ODEs that describe the discretized dynamics in Section~\ref{sec:discrete_dynamics}. We state these results below, and then provide the proofs in Sections \ref{sec:proof_ss}, \ref{sec:proof_pde}, and \ref{sec:proof_ode}, respectively.

\subsection{Steady-State Result} \label{subsec:steadystatethm}

We begin by stating the result for the simplest setting -- the steady-state system. The only assumption we require for the result in Theorem~\ref{thm:ss} below is that the ODE imposed by the dissipation equation on edges in \eqref{eq:ss_dissipation} admits a unique solution. For examples of systems where this property exists, refer to Section~\ref{sec:discussion}. 

\begin{assumption}
\label{thm:ss_assumption}
Consider the dissipation function $f_{ij}$ in the dissipation equation \eqref{eq:ss_dissipation}. Let $\rho_{ij}(x_c) = \rho_0$ for some $x_c \in [0,L_{ij}]$ and $\phi_{ij} = \phi_0$ be given. Then for all admissible $x_c\in [0,L_{ij}]$, $\rho_c\geq0$ and $\phi_{0} \in \mathbb{R}$, the ODE in the dissipation equation along with the above initial conditions admit a unique solution, i.e., there exists a unique trajectory $\rho(x)$ for $x \in [0,L_{ij}]$ with $\rho(x_c) = \rho_0$ such that 
\begin{align}
f_{ij}(\rho(x), \pp_x \rho(x)) + \phi_{0}  = 0.
\end{align}
\end{assumption}
\vspace{1ex}

\begin{theorem}
\label{thm:ss}
Suppose that the dissipation function $f_{ij}(u,v)$ in \eqref{eq:ss_dissipation} is strictly increasing in the second argument $v$ for all $(i,j) \in \cE$ and that Assumption~\ref{thm:ss_assumption} is satisfied. Consider two sets of flow injections $q_i^{(1)}$ and $q_i^{(2)}$ associated with densities $\rho_i^{(1)}$ and $\rho_i^{(2)}$ respectively, satisfying \eqref{eq:steady_state}. Let $\mathcal{S} \subseteq \cV$ be an arbitrary subset of $\cV$ such that for all $i \in \mathcal{S}$ we have $q_i^{(1)} \geq q_i^{(2)}$ and for all $i \in \cV \setminus \mathcal{S}$ we have $\rho_i^{(1)} \geq \rho_i^{(2)}$. Then all the nodal densities in the system satisfy $\rho_i^{(1)} \geq \rho_i^{(2)}$ for all $i \in \cV$ and all edge densities satisfy $\rho_{ij}^{(1)}(x_{ij}) \geq \rho_{ij}^{(2)}(x_{ij})$ for all $x_{ij} \in [0,L_{ij}]$ and all $(i,j) \in \cE$.
\end{theorem}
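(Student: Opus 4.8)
The plan is to reduce the edge dissipation relation to a scalar first-order ODE, extract from it a strictly monotone dependence of each edge flow on its two boundary densities, and then run a discrete maximum-principle (graph-cut) argument over the set of nodes where ordering would fail. \textbf{Step 1 (scalar edge ODE).} Since $f_{ij}(u,v)$ is strictly increasing in $v$, for a fixed value of $\phi_{ij}$ the dissipation equation \eqref{eq:ss_dissipation} can be inverted in its second argument to yield a scalar first-order ODE $\partial_x\rho_{ij}=h_{ij}(\rho_{ij},\phi_{ij})$ in which $h_{ij}$ is strictly monotone in $\phi_{ij}$; Assumption~\ref{thm:ss_assumption} guarantees a unique trajectory on $[0,L_{ij}]$ once one boundary value and $\phi_{ij}$ are prescribed. \textbf{Step 2 (edge flow is monotone in the nodal densities).} By the compatibility conditions \eqref{eq:ss_compressor}, the boundary densities $\underline\rho_{ij}=\underline\alpha_{ij}(\rho_i)$ and $\overline\rho_{ij}=\overline\alpha_{ij}(\rho_j)$ are strictly increasing in $\rho_i$ and $\rho_j$. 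A comparison argument for the ODE of Step 1 shows that, for a fixed left boundary value, the right boundary value is strictly monotone in $\phi_{ij}$; inverting, the unique flow connecting the two prescribed boundary densities is a well-defined function $\phi_{ij}=\Phi_{ij}(\rho_i,\rho_j)$ that is strictly monotone in each argument, increasing in the upstream (tail) density and decreasing in the downstream (head) density. Consequently the net outflow at any node, $\sum_{k\in\partial_{-}j}\phi_{jk}-\sum_{i\in\partial_{+}j}\phi_{ij}$, is strictly increasing in $\rho_j$ and strictly decreasing in each neighboring nodal density.

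\textbf{Step 3 (cut argument).} Suppose for contradiction that the nodal conclusion fails, and set $W=\{\,j\in\cV:\rho_j^{(1)}<\rho_j^{(2)}\,\}$. Then $W\neq\emptyset$, and since every node of $\cV\setminus\cS$ satisfies $\rho_j^{(1)}\ge\rho_j^{(2)}$ by hypothesis, $W\subseteq\cS$. Summing the flow-conservation equation \eqref{eq:ss_flow_conservation} over $j\in W$, every edge internal to $W$ cancels, leaving
\begin{align}
\sum_{j\in W} q_j = \!\!\sum_{\substack{(a,b)\in\cE\\ a\in W,\,b\notin W}}\!\!\phi_{ab}\;-\!\!\sum_{\substack{(a,b)\in\cE\\ a\notin W,\,b\in W}}\!\!\phi_{ab},
\end{align}
so that $\sum_{j\in W}q_j$ equals the net flow out of the set $W$, for each of the two solutions. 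Because $W\subseteq\cS$ gives $\sum_{j\in W}q_j^{(1)}\ge\sum_{j\in W}q_j^{(2)}$, the net outflow of $W$ is no smaller in solution $1$. On the other hand, applying Step 2 edge by edge across the cut, every edge leaving $W$ has strictly lower tail density and no-larger head density in solution $1$, hence strictly smaller flow, while every edge entering $W$ has strictly larger flow in solution $1$; thus the net outflow of $W$ is \emph{strictly smaller} in solution $1$ whenever the cut is nonempty. This contradiction forces $W=\emptyset$, i.e. $\rho_i^{(1)}\ge\rho_i^{(2)}$ for all $i\in\cV$. The cut is nonempty whenever $W\neq\cV$, which holds because $\cV\setminus\cS$ is disjoint from $W$; connectedness of $\Gamma$ (or the presence of a density-referenced node outside $\cS$) then excludes $\partial W=\emptyset$.

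\textbf{Step 4 (edge interiors).} With all nodal densities ordered, \eqref{eq:ss_compressor} yields $\underline\rho_{ij}^{(1)}\ge\underline\rho_{ij}^{(2)}$ and $\overline\rho_{ij}^{(1)}\ge\overline\rho_{ij}^{(2)}$ on every edge. Two profiles solving the scalar ODE of Step 1 with constant flows can coincide at most once: at any coincidence point the profile with the larger $\phi$ has a strictly larger slope (strict monotonicity of $h_{ij}$ in $\phi$), so $\rho_{ij}^{(1)}-\rho_{ij}^{(2)}$ can change sign in only one direction. Since this difference is nonnegative at both endpoints, no interior sign change is possible, giving $\rho_{ij}^{(1)}(x_{ij})\ge\rho_{ij}^{(2)}(x_{ij})$ for all $x_{ij}\in[0,L_{ij}]$ and all $(i,j)\in\cE$, which completes the proof.

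The main obstacle is Step 2: rigorously establishing that the edge flow $\Phi_{ij}$ is a well-defined, strictly monotone function of the two nodal densities via the comparison principle for the (possibly merely continuous) scalar ODE, using only strict monotonicity of $f_{ij}$ and the uniqueness in Assumption~\ref{thm:ss_assumption}, and correctly fixing the sign of the dependence so that higher nodal density corresponds to larger net outflow. The secondary point requiring care is guaranteeing that the cut in Step 3 is nonempty, which is precisely where connectedness of $\Gamma$ and the existence of a density-pinned node in $\cV\setminus\cS$ enter.
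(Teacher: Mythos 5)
Your proposal is correct in substance but follows a genuinely different route from the paper's. The paper factors the argument into three pieces: an ``Aquarius theorem'' (Proposition~\ref{thm:aquarius}) that sums flow conservation over a growing family of node sets to extract, for each $i\in\mathcal{S}$, a path from $\cV\setminus\mathcal{S}$ to $i$ along which the flows are ordered $\phi^{(1)}\le\phi^{(2)}$; an edge lemma (Proposition~\ref{thm:edge_monotonicity}) showing that an ordered endpoint density together with an ordered flow forces the other endpoint density to be ordered; and a two-crossing-point lemma (Proposition~\ref{thm:inside_edge_monotonicity}) for edge interiors, with the nodal ordering propagated by induction along the Aquarius path. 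You instead package the edge physics into a single strictly monotone flow function $\phi_{ij}=\Phi_{ij}(\rho_i,\rho_j)$ and run a discrete maximum principle: sum flow conservation over the violating set $W=\{j:\rho_j^{(1)}<\rho_j^{(2)}\}$ and contradict the sign of the net cut flow. Your Step~4 is essentially the paper's Proposition~\ref{thm:inside_edge_monotonicity} (note a harmless sign slip: with the paper's convention $\phi=-f(\rho,\partial_x\rho)$ the profile with larger $\phi$ has the strictly \emph{smaller} slope at a coincidence point, but the ``at most one zero'' conclusion only needs the sign to be consistent). Two things your route costs relative to the paper's: (i) you must prove the stronger lemma that $\Phi_{ij}$ is well defined and strictly monotone in each endpoint density --- provable from Assumption~\ref{thm:ss_assumption} plus strict monotonicity of $f_{ij}$ in $v$ by the no-recrossing argument you sketch, but more than the paper's Proposition~\ref{thm:edge_monotonicity}, which only needs the ``shooting'' direction (one endpoint plus the flow determines the other endpoint); and (ii) the strict contradiction across the cut requires the compatibility functions $\underline{\alpha}_{ij},\overline{\alpha}_{ij}$ to be \emph{strictly} increasing, whereas the paper's path propagation gets by with weak monotonicity of the $\alpha$'s because strictness is carried by the flows from the Aquarius construction. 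What your route buys is a shorter, more global argument with a transparent connection to classical uniqueness and monotonicity proofs for nonlinear resistive networks. Both proofs share the same implicit hypothesis, which you correctly flag and the paper leaves tacit: every connected component of $\Gamma$ must contain a node of $\cV\setminus\mathcal{S}$, since otherwise the cut (respectively, the Aquarius set growth) can terminate with an empty boundary and the conclusion can genuinely fail.
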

 
\subsection{Full PDE system result} \label{subsec:fullpdethm}

Next, we state the result for the PDE form of the full system dynamics given in Section~\ref{sec:full_dynamics}. We first state a set of regularity conditions that we impose on these dynamics. 
\begin{assumption} \label{assumption}
We make the following assumptions on initial value problem \eqref{eq:in_continuity}-\eqref{eq:in_initial_condition} that describes the coupled network flow dynamics with initial conditions.
\begin{itemize}
\item[(i)] \emph{Well-posedness and regularity of initial conditions:} The initial densities and flows are twice continuously differentiable, i.e. $\rho_{ij}^{0},\,\phi_{ij}^{0}\in C^2([0,L_{ij}])$ for all $(i,j)\in\cE$. Moreover the coupling constraints \eqref{eq:in_nodal_continuity} and \eqref{eq:in_pressure_comp} hold at $t=0$.
\item[(ii)] \emph{Continuity of inputs and control:} The compatibility functions are twice continuously differentiable, i.e. $\underline{\alpha}_{ij},\,\overline{\alpha}_{ij}\in C_+^2([0,T]\times\bR_+)$ for all $(i,j)\in\cE$,
as well as the nodal parameter functions $q_i\in C^2([0,T])$ for all $i\in\cV$.
\item[(iii)] \emph{Well-posedness of coupled network dynamics:} The initial value problem consisting of the coupled network flow dynamics with the initial conditions in \eqref{eq:in_continuity}-\eqref{eq:in_initial_condition}, along with given
compatibility functions $\overline{\alpha}_{ij}$ and $\underline{\alpha}_{ij}$, admits a unique classical solution that is twice continuously differentiable.
\item[(iv)] \emph{Stability under small perturbations:} Let $\rho_{ij}(t,x_{ij}), \phi_{ij}(t,x_{ij})$ for $(i,j) \in \cE$ be the unique classical solution to \eqref{eq:in_continuity}-\eqref{eq:in_initial_condition}. Let
$\rho_{ij,\epsilon}(t,x_{ij})$ and $\phi_{ij,\epsilon}(t,x_{ij})$ for all $(i,j) \in \cE$ be a solution to the perturbed system
\begin{align}
\!\!\!\!\!\!\!\!\!   \dS \pp_t\rho_{ij,\epsilon}(t,x_{ij})\!+\!\pp_x\phi_{ij,\epsilon}(t,x_{ij})  - \epsilon & =  0,  \label{perturbed1} \\
\!\!\!\!\!\!\!\!\!  \phi_{ij,\epsilon}(t,x_{ij})\!+\!f_{ij}(t,\rho_{ij,\epsilon}(t,x_{ij}), \partial_{x}\rho_{ij,\epsilon}(t,x_{ij}))& =0,  \label{perturbed2}
\end{align}
with the perturbed initial conditions
\begin{align}
\!\!\! \rho_{ij,\epsilon}(0,x)=\rho_{ij}^{0}(x) + \epsilon, \quad \phi_{ij,\epsilon}(0,x)=\phi_{ij}^{0}(x)  \label{eq:in_initial_condition_pert}
\end{align}
for all $(i,j)\in\cE$.
Then as $\epsilon \rightarrow 0$, the perturbed solution converges point-wise to the original solution, i.e., for all $(i,j) \in \cE$, $\ x_{ij} \in I_{ij}$  and  $t \in [0,T]$, we have
\begin{align}
\lim_{\epsilon \rightarrow 0} \rho_{ij,\epsilon}(t,x_{ij}) = \rho_{ij}(t,x_{ij}).
\end{align}
\end{itemize}
\end{assumption}

The monotone order propagation property for the full dynamics described in Section \ref{sec:full_dynamics} is stated below. 
\begin{theorem}
\label{thm:pde}
Suppose the initial value problem described in \eqref{eq:in_continuity}-\eqref{eq:in_initial_condition} satisfies Assumption \ref{assumption}. Also suppose that the dissipation function
$f_{ij}(t,u,v)$ is strictly increasing in the third argument $v$ for all $(i,j) \in \cE$. Let $\rho_{ij}^{(1)}(0,x_{ij})$ and  $\rho_{ij}^{(2)}(0,x_{ij})$ be two initial conditions that satisfy $\rho_{ij}^{(1)}(0,x_{ij}) \geq \rho_{ij}^{(2)}(0,x_{ij})$ for  all $(i,j) \in \cE$, $x_{ij} \in I_{ij}$. Let $\mathcal{S} \subseteq \cV$ be an arbitrary subset of $\cV$. Let $t_0 \in [0,T]$ and suppose that for all $i \in \mathcal{S}$ we have that $q_i^{(1)}(t) \geq q_i^{(2)}(t)$
for all $t \in [0,t_0]$ and for all $i \in \cV \setminus \mathcal{S}$ we have that $\rho_i^{(1)}(t) \geq \rho_i^{(2)}(t)$ for all $t \in [0,t_0]$. Then  the densities in the system satisfy $\rho_{ij}^{(1)}(t,x_{ij}) \geq \rho_{ij}^{(2)}(t,x_{ij})$
for all $(i,j) \in \cE$, $x_{ij} \in I_{ij}$ and $t \in [0,t_0]$.
\end{theorem}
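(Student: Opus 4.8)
The plan is to eliminate the flux variable and reduce the coupled system to a scalar quasilinear parabolic equation on each edge, then run a comparison (maximum-principle) argument adapted to the graph, with the nodal coupling handled through a Hopf boundary-point lemma together with the Kirchhoff--Neumann balance \eqref{eq:in_nodal_continuity}. First I would substitute the dissipation relation \eqref{eq:in_dissipation_eq}, $\phi_{ij}=-f_{ij}(t,\rho_{ij},\pp_x\rho_{ij})$, into the continuity equation \eqref{eq:in_continuity} to obtain, on each edge, the single equation $\pp_t\rho_{ij}=\pp_x f_{ij}(t,\rho_{ij},\pp_x\rho_{ij})$. Because $f_{ij}$ is strictly increasing in its third argument, the coefficient of $\pp_{x}\pp_x\rho_{ij}$ is $\partial_v f_{ij}>0$, so this is a genuinely parabolic (diffusive) equation, and uniform parabolicity on the compact space--time domain follows from continuity of $\partial_v f_{ij}$ together with the $C^2$ bounds of Assumption \ref{assumption}(ii)--(iii). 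Writing $w_{ij}=\rho^{(1)}_{ij}-\rho^{(2)}_{ij}$ and applying the mean-value theorem to the difference of the two solutions yields a linear parabolic equation in divergence form, $\pp_t w_{ij}=\pp_x(a_{ij}w_{ij}+b_{ij}\pp_x w_{ij})$, where $b_{ij}=\int_0^1\partial_v f_{ij}\,ds>0$ and $a_{ij}=\int_0^1\partial_u f_{ij}\,ds$ are bounded coefficients evaluated along the segment joining the two states.

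Because the ordering in the conclusion is non-strict, I would first prove a strict version for the perturbed system \eqref{perturbed1}--\eqref{eq:in_initial_condition_pert} of Assumption \ref{assumption}(iv), comparing the perturbed solution $\rho^{(1)}_{ij,\epsilon}$ against $\rho^{(2)}_{ij}$, and then let $\epsilon\to0$ invoking the pointwise convergence guaranteed by Assumption \ref{assumption}(iv). For fixed $\epsilon>0$ the perturbation makes $w_\epsilon>0$ initially, and I would define $t^*$ as the first time that $w_\epsilon$ vanishes at some point of the network. The interior of every edge is excluded as the location of this first contact: at an interior spatial minimum with $w_\epsilon=0$ one has $\pp_x w_\epsilon=0$ and $\pp_x\pp_x w_\epsilon\ge0$, so the divergence-form equation together with the additional source $\epsilon$ yields $\pp_t w_\epsilon\ge\epsilon>0$, contradicting $\pp_t w_\epsilon\le0$ at a first downward contact. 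Hence at $t^*$ the difference is strictly positive throughout every edge interior and can vanish only at a vertex.

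It remains to rule out a first contact at a vertex $j$, and this is the main obstacle, since it is where the graph coupling enters. At a density-constrained vertex $j\in\cV\setminus\cS$ the boundary values are pinned by the monotone compatibility maps \eqref{eq:in_pressure_comp}, so $w_\epsilon\ge\underline\alpha_{jk}(t,\rho^{(1)}_j)-\underline\alpha_{jk}(t,\rho^{(2)}_j)\ge0$ there by the hypothesis $\rho^{(1)}_j\ge\rho^{(2)}_j$, and the difference cannot pass through zero to negative values. At a flux-constrained vertex $j\in\cS$, suppose $\rho^{(1)}_{j}(t^*)=\rho^{(2)}_{j}(t^*)$; monotonicity of $\underline\alpha_{jk},\overline\alpha_{ij}$ forces $w_\epsilon=0$ at the node-end of every adjoining edge while $w_\epsilon>0$ in each edge interior. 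The parabolic Hopf boundary-point lemma then supplies strict one-sided spatial derivatives, $\pp_x w_\epsilon>0$ at the $x=0$ end of each outgoing edge and $\pp_x w_\epsilon<0$ at the $x=L_{ij}$ end of each incoming edge. Since $\phi_{ij}=-f_{ij}$ and $w_\epsilon=0$ at the node, the linearization gives $\underline\phi^{(1)}_{jk}-\underline\phi^{(2)}_{jk}=-b_{jk}\pp_x w_\epsilon<0$ for outgoing edges and $\overline\phi^{(1)}_{ij}-\overline\phi^{(2)}_{ij}=-b_{ij}\pp_x w_\epsilon>0$ for incoming edges.

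Substituting these strict signs into the Kirchhoff--Neumann balance \eqref{eq:in_nodal_continuity}, written for each solution and subtracted, produces $q^{(1)}_j-q^{(2)}_j=\sum_{k\in\partial_-j}(\underline\phi^{(1)}_{jk}-\underline\phi^{(2)}_{jk})-\sum_{i\in\partial_+j}(\overline\phi^{(1)}_{ij}-\overline\phi^{(2)}_{ij})<0$, contradicting the hypothesis $q^{(1)}_j\ge q^{(2)}_j$. Thus no first contact occurs anywhere on $[0,t_0]$, so $w_\epsilon>0$ there; letting $\epsilon\to0$ via Assumption \ref{assumption}(iv) yields $\rho^{(1)}_{ij}\ge\rho^{(2)}_{ij}$, as claimed. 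The delicate points I would verify in full are the applicability of the Hopf lemma on the bounded space--time edge domain, which requires the uniform parabolicity and boundedness of $a_{ij},b_{ij}$ noted above, and the simultaneous treatment of all edges meeting at $j$, so that at least one strict contribution enters the balance and the inequality $q^{(1)}_j-q^{(2)}_j<0$ is indeed strict.
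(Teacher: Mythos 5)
Your proposal follows the same skeleton as the paper's proof: perturb the continuity equation by $\epsilon$ as in \eqref{perturbed1}--\eqref{eq:in_initial_condition_pert}, show that a first violation of the ordering cannot occur in the interior of an edge (via the sign of the second spatial derivative at a touching point together with the strict $\epsilon$ forcing), show that it cannot occur at a vertex (via the strict monotonicity of $f_{ij}$ in $v$ fed into the Kirchhoff balance \eqref{eq:in_nodal_continuity}), and then pass to the limit $\epsilon\to 0$ using Assumption~\ref{assumption}(iv). The one genuinely different ingredient is your use of the parabolic Hopf boundary-point lemma at a vertex in $\mathcal{S}$: it delivers strictly nonzero one-sided derivatives of $w_\epsilon$ on every adjoining edge in one stroke, so the strict flux imbalance $q_j^{(1)}-q_j^{(2)}<0$ is immediate. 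The paper instead argues elementarily, splitting into the case of equal first derivatives (disposed of by the same second-derivative computation as in the interior) and the case of a strict derivative inequality (which feeds the flux balance), and it uses an auxiliary lemma (Lemma~\ref{lem:nodal_cross_equiv}) to propagate the crossing to all edges meeting the vertex through the invertible compatibility maps. Your route is cleaner at the vertex but buys this at the cost of verifying the hypotheses of the Hopf lemma --- uniform parabolicity, boundedness of $a_{ij}$, $b_{ij}$ and their derivatives, and strict positivity of $w_\epsilon$ in the parabolic interior up to the contact time --- which you correctly flag as the delicate points; note that this implicitly requires $f_{ij}$ to be continuously differentiable in $(u,v)$, a hypothesis not stated in the theorem (though the paper's own proof also uses $\partial_u f_{ij}$ and $\partial_v f_{ij}$).

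One step needs repair. At a vertex $j\in\cV\setminus\mathcal{S}$ you bound $w_\epsilon$ below by $\underline{\alpha}_{jk}(t,\rho_j^{(1)})-\underline{\alpha}_{jk}(t,\rho_j^{(2)})$, but the quantity being compared against $\rho^{(2)}$ is the \emph{perturbed} solution, whose nodal density is $\rho^{(1)}_{j,\epsilon}$, not $\rho^{(1)}_j$. Unless the nodal density at such a vertex is literally prescribed data shared by the perturbed and unperturbed systems, you need the additional comparison $\rho^{(1)}_{j,\epsilon}\geq\rho^{(1)}_j$; this is exactly the paper's Lemma~\ref{rem:perturbed_ordering}, obtained by rerunning the no-first-crossing argument with $\rho^{(2)}$ replaced by $\rho^{(1)}$. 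With that lemma inserted, your argument closes.
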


Assuming that the dynamics converge to the steady state solution when the input is time-independent, then the monotone order property in Theorem~\ref{thm:pde} can be used to prove Theorem~\ref{thm:ss} by using the continuity property in Assumption~\ref{assumption} (ii). However, the result in Theorem~\ref{thm:ss} holds with no additional assumptions other than the necessary uniqueness of solutions in Assumption~\ref{thm:ss_assumption}, afforded by a very different first principle based proof technique, and is worth stating separately. 

\subsection{Spatially discretized ODE system result}  \label{subsec:discretizedthm}
Our next result asserts that the monotone order propagation property is preserved when the full PDE dynamics in Section~\ref{sec:full_dynamics} are discretized to the system of ODEs as described in Section~\ref{sec:discrete_dynamics}.
\begin{theorem}
\label{thm:ode}
Suppose that the dissipation function
$f_{ij}(t,u,v)$ is strictly increasing in the third argument $v$ for all $(i,j) \in \cE$.
Let $\rho_j^{(1)}(0)$ and $\rho_j^{(2)}(0)$ be two initial conditions such that $\rho_j^{(1)}(0) \geq \rho_j^{(2)}(0)$ for all $j \in \cV$. Let $\mathcal{S} \subseteq \cV$ be an arbitrary subset of $\cV$. Suppose that for all $i \in \mathcal{S}$ we have that $q_i^{(1)}(t) \geq q_i^{(2)}(t)$
for all $t \in [0,T]$ and for all $i \in \cV \setminus \mathcal{S}$ we have that $\rho_i^{(1)}(t) \geq \rho_i^{(2)}(t)$ for all $t \in [0,T]$. Then the evolution of nodal density variables for the system, according to Eq.~\eqref{disceq3}, satisfies $\rho_i^{(1)}(t) \geq \rho_i^{(2)}(t)$ for all $t \in [0,T]$ and for all $i \in \cV$.
\end{theorem}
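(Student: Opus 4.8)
The plan is to recognize the discretized nodal dynamics \eqref{disceq3} as a \emph{cooperative} (quasimonotone) parameterized control system in the sense of \cite{zlotnik16ecc,hirsch05}, and to obtain the ordering of trajectories from a comparison principle for such systems once the Kamke conditions \cite{kamke32,hirsch05} and monotonicity in the input parameters are verified. To set this up, partition the nodes of the refined graph $\hat{\cV}_{\eP}$ into \emph{state nodes}, whose density evolves according to \eqref{disceq3}, and \emph{boundary nodes} $\cV\setminus\mathcal{S}$, whose density $\rho_i(t)$ is a prescribed function of time. The state nodes comprise $\mathcal{S}$ together with all interior refinement nodes, which carry $q\equiv 0$ identically for both solutions. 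Collecting the evolving densities into a vector $\rho=(\rho_j)$ and the ordered data $\{q_i(t):i\in\mathcal{S}\}\cup\{\rho_i(t):i\in\cV\setminus\mathcal{S}\}$ into a parameter $p(t)$, the system takes the form $\dot\rho=G(t,\rho;p)$, with $G_j$ given by the right-hand side of \eqref{disceq3}.

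First I would verify the Kamke (quasimonotonicity) condition: for state nodes $k\neq j$, $\partial G_j/\partial\rho_k\geq 0$. The prefactor $\tfrac{2}{\eP}(\partial_\rho\alpha_j)^{-1}$ and the trailing term $-\partial_t\alpha_j/\partial_\rho\alpha_j$ depend only on $(t,\rho_j)$ through \eqref{eq:alpha_def}, and the ``first-slot'' arguments $\overline{\alpha}_{ij}(t,\rho_j)$, $\underline{\alpha}_{jk}(t,\rho_j)$ of the dissipation functions likewise involve only $\rho_j$; hence $\rho_k$ enters $G_j$ solely through the discrete gradient (third) argument of the dissipation function on the edge joining $j$ and $k$. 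If $(j,k)\in\hat{\cE}_{\eP}$ (downstream), this contribution is
\begin{align}
\frac{\partial G_j}{\partial\rho_k}=\frac{2}{\eP}\bp{\frac{\partial\alpha_j}{\partial\rho}}^{-1}\partial_v f_{\mu(jk)}\cdot\frac{1}{\eP}\frac{\partial\overline{\alpha}_{jk}}{\partial\rho}>0,
\end{align}
while if $(k,j)\in\hat{\cE}_{\eP}$ (upstream), the minus sign preceding the $\partial_+ j$ sum in \eqref{disceq3} and the minus sign multiplying $\underline{\alpha}_{kj}$ in the discrete gradient combine to give, once more, a positive contribution. Here I use $\partial_v f>0$ (strict monotonicity in the third argument, by hypothesis), the monotonicity of the compatibility functions $\overline{\alpha},\underline{\alpha}$, and $\partial_\rho\alpha_j>0$; non-adjacent $k$ give $\partial G_j/\partial\rho_k=0$. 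Thus $G$ is quasimonotone increasing in the state.

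Next I would check monotonicity in the parameters. The injection $q_j$ enters \eqref{disceq3} linearly with coefficient $\tfrac{2}{\eP}(\partial_\rho\alpha_j)^{-1}>0$, so $\partial G_j/\partial q_j>0$; and for a boundary neighbor $i\in\cV\setminus\mathcal{S}$, the density $\rho_i$ enters $G_j$ through exactly the gradient-argument mechanism analyzed above, so $\partial G_j/\partial\rho_i\geq 0$. Hence $G(t,\rho;p)$ is nondecreasing in every component of $p$ that the hypothesis orders. Since the ordering hypotheses give $p^{(1)}(t)\geq p^{(2)}(t)$ componentwise, it follows that $G(t,x;p^{(1)})\geq G(t,x;p^{(2)})$ for every $x$; in particular $\rho^{(2)}$ satisfies $\dot\rho^{(2)}\le G(t,\rho^{(2)};p^{(1)})$, i.e.\ it is a subsolution of the flow generated by the quasimonotone field $G(\cdot,\cdot;p^{(1)})$, while $\rho^{(1)}$ solves the corresponding equality with $\rho^{(1)}(0)\geq\rho^{(2)}(0)$.

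Finally I would invoke the comparison principle for quasimonotone systems to conclude $\rho^{(2)}(t)\leq\rho^{(1)}(t)$ on $[0,T]$. The main obstacle is the strictness gap in the first-crossing argument: at a first crossing time $t^\ast$, where $\rho_j^{(2)}(t^\ast)=\rho_j^{(1)}(t^\ast)$ while $\rho^{(2)}(t^\ast)\leq\rho^{(1)}(t^\ast)$, the quasimonotonicity and parameter monotonicity yield only the \emph{non-strict} inequality $\dot\rho_j^{(2)}(t^\ast)\leq\dot\rho_j^{(1)}(t^\ast)$, which does not by itself preclude a crossing. To close the gap I would use the standard device of perturbing the upper system, adding $\epsilon>0$ to each component of $G(\cdot,\cdot;p^{(1)})$ and raising its initial data by $\epsilon$, so that the first-crossing inequality becomes strict and no crossing can occur for the perturbed problem, and then letting $\epsilon\to 0$ and using continuous dependence of solutions on the data. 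Continuous dependence, together with the existence and uniqueness needed throughout, is guaranteed by the Lipschitz regularity of $f_{ij}$, $\overline{\alpha}_{ij}$, $\underline{\alpha}_{ij}$ and the well-posedness and positivity of the discretization asserted in the method-of-lines Remark~\ref{rem:mol}, which also keeps $\partial_\rho\alpha_j>0$ bounded away from zero along trajectories so that $G$ is well defined.
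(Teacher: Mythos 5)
Your proposal is correct and follows essentially the same route as the paper: cast \eqref{disceq3} as a parameterized control system, verify the Kamke--M\"uller conditions by checking that the state Jacobian is Metzler and the input Jacobian is non-negative (using $\partial_v f_{ij}>0$ and the monotonicity of the compatibility functions), and conclude via the standard comparison theorem for cooperative systems. The only difference is that you additionally sketch the $\epsilon$-perturbation argument behind the comparison principle itself, which the paper simply cites as a known result (its Proposition~\ref{thm:kamke_muller}).
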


Although Theorem~\ref{thm:pde} shows that the physical system described by the full dynamics preserves monotone ordering, it is not always the case that a discretization scheme inherits this property \cite{lipnikov2007monotone}. Theorem~\ref{thm:ode} shows that a simple Euler discretization scheme successfully inherits this property. As described later in Section~\ref{sec:discussion}, a discretization scheme that preserves monotone ordering is crucial to obtain tractable formulations in applications involving robust optimal control and security monitoring.

\section{Discussions and Applications} \label{sec:discussion}

The theorems provide a powerful set of conceptual tools that can be applied to establish additional properties of systems that feature physical flows on networks.  These monotone ordering property results can furthermore be applied to the development of algorithms for control and optimization of such systems, which they can greatly simplify by using formulations that are specific to how flows throughout the network are monitored and controlled.

\subsection{Uniqueness of Solutions for Steady-State Systems} \label{subsec:uniqueness}

Theorem~\ref{thm:ss}, which guarantees that the system is monotonic in the densities with respect to a change in the input parameters, enables us to prove that a solution for the network flow problem is necessarily unique.
Consider two solutions $(q^{(1)},\phi^{(1)},\rho^{(1)})$ and $(q^{(2)},\phi^{(2)},\rho^{(2)})$ of the steady-state problem given by Eq.~\eqref{eq:steady_state} where input parameters have been prescribed at a subset of nodes $\mathcal{S}\subseteq \cV$ and the nodal densities have been prescribed at the complementary subset of nodes $\cV\setminus\mathcal{S}$. The monotonicity Theorem~\ref{thm:ss} immediately implies that the densities of both solutions are equal everywhere as $q^{(1)}_i = q^{(2)}_i$ for $i\in \mathcal{S}$ and $\rho^{(1)}_j = \rho^{(2)}_j$ for $j\in \cV\setminus\mathcal{S}$. Since the densities are equal everywhere, it also implies that the flows and the input parameters at remaining nodes $\cV\setminus\mathcal{S}$ are equal as they are uniquely determined from the densities using Eq.~\eqref{eq:steady_state}.


\subsection{Potential Difference for Steady-State Systems} \label{subsec:potentialdiff}
In the steady-state regime, when the dissipation function is the composition of an increasing function $g$ and the derivative of an increasing function $h(\rho)$, 
\begin{align}
f_{ij}(\rho_{ij}(x_{ij}), \partial_x \rho_{ij}(x_{ij})) &= g\left(\frac{d}{dx}h(\rho_{ij}(x_{ij}))\right)\nonumber\\
&= g\left(h'(\rho_{ij}(x_{ij}))\partial_x \rho_{ij}(x_{ij})\right),
\end{align}
the system is integrable and can be expressed exclusively using nodal quantities.
The ODE equation for the flow now reads 
\begin{align}
g^{-1}(\phi_{ij}) = -h'(\rho_{ij}(x_{ij}))\partial_x \rho_{ij}(x_{ij}),
\end{align}
where $g^{-1}$ is the inverse function of $g$. This equation can be integrated exactly over $[0,L_{ij}]$ and gives
\begin{align}
g^{-1}(\phi_{ij}) L_{ij} = h(\rho_{ij}(0))-h(\rho_{ij}(L_{ij})).
\end{align}
The two terms $h(\underline \rho_{ij}) = \underline \psi_{ij}$ and $h(\overline \rho_{ij}) = \overline \psi_{ij}$ can be interpreted as a ``potential", and the flow is then induced by difference in this potential, and moreover is from higher to lower potential as the function $g$ is increasing. 
\begin{align}
\overline \psi_{ij} - \underline \psi_{ij} = -\frac{1}{L_{ij}} g^{-1}_{ij}(\phi_{ij}). \label{eq:pd_ss}
\end{align}
Equation \eqref{eq:pd_ss} closely resembles a resistive electric circuit with a non-linear dissipation term given by $g^{-1}(.)$.




\subsection{Application to Robust Optimal Control} \label{subsec:robustcontrol}

The monotonicity properties established above have several important implications for formulating robust optimal control problems for parabolic PDE systems on networks.  Here we consider a robust control formulation where the nodal parameter functions $q_i(t)$ are prescribed within a compact subset of twice continuously differential functions $C^2[0,T]$, but are uncertain.  Control formulations have been developed to address problems related to the transportation of commodities over networks, in particular the flow of compressible fluids such as natural gas in large scale pipeline systems \cite{herty10,zlotnik15cdc}, where the physical flows are described by systems of the form of equations \eqref{eq:in_continuity}-\eqref{eq:in_pressure_comp}.  However, the addition of uncertainty to the parameters in such problems, specifically in the consumption of the transported commodity by consumers throughout the network, requires the formulation and solution of the robust optimal control problem.  A major motivation for the development of the monotonicity theory presented here is its use in formulating computationally tractable and scalable algorithms for such problems. Consider the following deterministic optimal control problem:

\begin{subequations}\label{eq:ocp0}
\begin{align}
\!\!\! \min \,\,\, & \cJ(\rho, \phi, \alpha) = \int_0^T \cL(t,\underline{\phi}(t),\overline{\phi}(t),\underline{\alpha}(t),\overline{\alpha}(t)) dt, \label{eq:det_obj} \\
\!\!\!\mbox{s.t.} \,\,\, & \dS \pp_t\rho_{ij}(t,x_{ij})+\pp_x\phi_{ij}(t,x_{ij})  =  0  \label{eq:opc0_b}\\
\!\!\! &\phi_{ij}(t,x_{ij})+f_{ij}(t,\rho_{ij}(t,x_{ij}), \partial_{x}\rho_{ij}(t,x_{ij}))  =0, \label{eq:opc0_c}\\
\!\!\! &\underline{\rho}_{ij}(t) \!=\! \underline{\alpha}_{ij}\rho_{i}(t), \,\, \overline{\rho}_{ij}(t)  \!=\! \overline{\alpha}_{ij}\rho_{j}(t), \, \fA (i,j)\in\cE \label{eq:opc0_d} \\
\!\!\! &q_j(t)+\sum_{i\in\partial_{+}j}\overline{\phi}_{ij}- \sum_{k\in\partial_{-}j}\underline{\phi}_{jk}=0, \quad \fA j\in\cV \label{eq:opc0_e} \\
\!\!\! & \rho_{min} \leq \rho_{ij}(t,x_{ij}) \leq \rho_{max}, \,\, \fA (i,j)\in\cE, \label{eq:det_ineq} \\
\!\!\! & e_\rho(\rho_{ij}(0,x_{ij}),\rho_{ij}(T,x_{ij}))) = 0 \,\, \fA (i,j)\in\cE, \label{eq:terminal_a} \\
\!\!\! & e_\phi(\phi_{ij}(0,x_{ij}),\phi_{ij}(T,x_{ij}))) = 0 \,\, \fA (i,j)\in\cE. \label{eq:terminal_b} 
\end{align}
\end{subequations}
The above formulation is a minimal abstraction of the practical problem of pipeline optimal control \cite{zlotnik2017economic,zlotnik19cdc}, in which the PDE dynamic constraints \eqref{eq:in_continuity}-\eqref{eq:in_dissipation_eq}, nodal flow balance constraints \eqref{eq:in_nodal_continuity}, and nodal density (actuation) compatibility constraints \eqref{eq:in_pressure_comp} are represented by \eqref{eq:opc0_b}-\eqref{eq:opc0_c}, \eqref{eq:opc0_d}, and \eqref{eq:opc0_e}, respectively.  Here the density compatibility functions are linear with factors $\underline{\alpha}_{ij}$ and $\overline{\alpha}_{ij}$, which are not otherwise constrained.  In practice, complex constraints on gas compressor actuators must be enforced.  The constraints \eqref{eq:det_ineq} are invoked to enforce operational requirements to maintain system pressurization, and the equations \eqref{eq:terminal_a} and \eqref{eq:terminal_b} represent criteria on the initial and terminal states.  We leave these ambiguous here, because characterization of these conditions to guarantee well-posed optimization problems in function space remains an open problem. In defining the objective function, we write $\underline{\phi}(t)=\{\underline{\phi}_{ij}(t)\}_{(ij)\in E} \in \bR^{E}$ and $\overline{\phi}(t)=\{\overline{\phi}_{ij}(t)\}_{(ij)\in E} \in \bR^{E}$.  Similarly, we write $\underline{\alpha}(t)=\{\underline{\alpha}_{ij}(t)\}_{(ij)\in E} \in \bR^{E}$ and $\overline{\alpha}(t)=\{\overline{\alpha}_{ij}(t)\}_{(ij)\in E} \in \bR^{E}$, which form the collection of control functions.
Here we formulate a version of problem \eqref{eq:ocp0} in which the solution is feasible given instances of the nodal parameter functions $q_i(t)$ within some known bounds, i.e.,
\begin{align}
q^{(1)}_j(t) \geq q_j(t) \geq q^{(2)}_j(t),  \quad \fA j\in\cV \text{ and } t\in[0,T].  \label{box}
\end{align}
A solution to the resulting problem, which we call robust to uncertain variation, is extremely challenging because of the semi-infinite set of constraints (\eqref{eq:opc0_b}-\eqref{eq:det_ineq} must be satisfied for all values of $q_i(t)$ in \eqref{box}).  Using Theorem~\ref{thm:pde} (or one of Theorem~\ref{thm:ss} or Theorem~\ref{thm:ode} as the setting may require) however, we can compose a dramatically simplified reformulation of the robust control problem \eqref{eq:ocp0} with \eqref{box} as well as a ``monitoring'' mechanism that we will describe in a subsequent subsection.

\subsection{Simplified Representation of Robust Optimal Control} \label{subsec:robustreformulation}

As a consequence of Theorem~\ref{thm:pde}, we can obtain a reformulation of the semi-infinite constrained robust control problem with interval uncertainty as specified in the equation \eqref{box} by enforcing feasibility only for the extreme scenarios.  In particular, we seek an optimal solution that is simultaneously feasible for two scenarios - one in which the maximum load (corresponding to the minimum injections $q^{(2)}$) occurs while system pressures must be maintained above the minimum limits; and the other in which the minimum load occurs (corresponding to the maximum injections $q^{(1)}$) while the pressures are maintained below the maximum limits.  As long as the optimal control solution satisfies the constraints for the two extremal cases of nodal parameter functions $q_i(t)$ for $j\in\cV$, feasibility will also be guaranteed for all nodal parameter functions that are bounded by the extreme scenarios. 

\newpage

\noindent We state the entire formulation below for completeness:
\begin{subequations} \label{eq:ocp1}
\begin{align}
\!\!\! \min \,\,\, & \cJ(\rho, \phi, \alpha) \! =\!\! \int_0^T \!\!\!\!\cL(t,\underline{\phi}(t),\overline{\phi}(t),\underline{\alpha}(t),\overline{\alpha}(t)) dt, \label{eq:rob_obj} \\
\!\!\!\mbox{s.t.} \,\,\, & \dS \pp_t\rho_{ij}(t,x_{ij})+\pp_x\phi_{ij}(t,x_{ij})  =  0  \\
\!\!\! &\phi_{ij}\!(t,x_{ij}) \!+\! f_{ij}(t,\rho_{ij}(t,x_{ij}), \partial_{x}\rho_{ij}(t,x_{ij})) \!=\! 0, \\
\!\!\! &\underline{\rho}_{ij}\!(t) \!=\! \underline{\alpha}_{ij}\rho_{i}\!(t), \,\, \overline{\rho}_{ij}\!(t)  \!=\! \overline{\alpha}_{ij}\rho_{j}\!(t), \, \fA \! (\!i\!,\!j\!)\!\in\!\cE \\
\!\!\! &\hat{q}_j(t)+\sum_{i\in\partial_{+}j}\overline{\phi}_{ij}- \sum_{k\in\partial_{-}j}\underline{\phi}_{jk}=0, \quad \fA j\in\cV \\
\!\!\! & e_\rho(\rho_{ij}(0,x_{ij}),\rho_{ij}(T,x_{ij}))) = 0 \,\, \fA (i,j)\in\cE, \\
\!\!\! & e_\phi(\phi_{ij}(0,x_{ij}),\phi_{ij}(T,x_{ij}))) = 0 \,\, \fA (i,j)\in\cE, \\
\!\!\! & \dS \pp_t\rho_{ij}^{(1)}(t,x_{ij})+\pp_x\phi_{ij}^{(1)}(t,x_{ij})  =  0  \\
\!\!\! &\phi_{ij}^{(1)}\!(t,x_{ij}) \!+\! f_{ij}(t,\rho_{ij}^{(1)}(t,x_{ij}), \partial_{x}\rho_{ij}^{(1)}(t,x_{ij})) \!=\! 0, \\
\!\!\! &\underline{\rho}_{ij}^{(1)}\!(t) \!=\! \underline{\alpha}_{ij}\rho_{i}^{(1)}\!(t), \,\, \overline{\rho}_{ij}^{(1)}\!(t)  \!=\! \overline{\alpha}_{ij}\rho_{j}^{(1)}\!(t), \, \fA \! (\!i\!,\!j\!)\!\in\!\cE \\
\!\!\! &q_j^{(1)}(t)+\sum_{i\in\partial_{+}j}\overline{\phi}_{ij}^{(1)}- \sum_{k\in\partial_{-}j}\underline{\phi}_{jk}^{(1)}=0, \quad \fA j\in\cV \\
\!\!\! & e_\rho(\rho^{(1)}_{ij}(0,x_{ij}),\rho^{(1)}_{ij}(T,x_{ij}))) = 0 \,\, \fA (i,j)\in\cE, \\
\!\!\! & e_\phi(\phi^{(1)}_{ij}(0,x_{ij}),\phi^{(1)}_{ij}(T,x_{ij}))) = 0 \,\, \fA (i,j)\in\cE, \\
\!\!\! & \dS \pp_t\rho_{ij}^{(2)}(t,x_{ij})+\pp_x\phi_{ij}^{(2)}(t,x_{ij})  =  0  \\
\!\!\! &\phi_{ij}^{(2)}\!(t,x_{ij}) \!+\! f_{ij}(t,\rho_{ij}^{(2)}(t,x_{ij}), \partial_{x}\rho_{ij}^{(2)}(t,x_{ij})) \!=\! 0, \\
\!\!\! &\underline{\rho}_{ij}^{(2)}\!(t) \!=\! \underline{\alpha}_{ij}\rho_{i}^{(2)}\!(t), \,\, \overline{\rho}_{ij}^{(2)}\!(t)  \!=\! \overline{\alpha}_{ij}\rho_{j}^{(2)}\!(t), \, \fA \! (\!i\!,\!j\!)\!\in\!\cE \\
\!\!\! &q_j^{(2)}(t)+\sum_{i\in\partial_{+}j}\overline{\phi}_{ij}^{(2)}- \sum_{k\in\partial_{-}j}\underline{\phi}_{jk}^{(2)}=0, \quad \fA j\in\cV \\
\!\!\! & e_\rho(\rho^{(2)}_{ij}(0,x_{ij}),\rho^{(2)}_{ij}(T,x_{ij}))) = 0 \,\, \fA (i,j)\in\cE, \\
\!\!\! & e_\phi(\phi^{(2)}_{ij}(0,x_{ij}),\phi^{(2)}_{ij}(T,x_{ij}))) = 0 \,\, \fA (i,j)\in\cE, \\
\!\!\! & \rho_{min} \leq \rho_{ij}^{(1)}(t,x_{ij}) \leq \rho_{max}, \,\, \fA (i,j)\in\cE \label{eq:rob_ineq1} \\
\!\!\! & \rho_{min} \leq \rho_{ij}^{(2)}(t,x_{ij}) \leq \rho_{max}, \,\, \fA (i,j)\in\cE. \label{eq:rob_ineq2}
\end{align}
\end{subequations}

In the above formulation, the objective function $\cJ$ is defined in terms of flows and actuation factors $\underline{\phi}$, $\overline{\phi}$, $\underline{\alpha}$, and $\overline{\alpha}$ for the primary problem, which are defined by the optimal solution given the nominal injection profiles $\hat{q}_i(t)$ that are bounded by the extremal envelopes.  The optimization is subject to dynamic constraints that describe the effects of the controls $\underline{\alpha}$ and $\overline{\alpha}$, as in the deterministic problem \eqref{eq:ocp0}, for a nominal case $\hat{q}$ of the injections. In addition, we apply PDE dynamic constraints, nodal flow balance constraints, nodal density  compatibility constraints, and terminal constraints for application of the same optimized controls, $\underline{\alpha}$ and $\overline{\alpha}$, in the high injection case $q^{(1)}$ for the variables $\rho^{(1)}$ and $\phi^{(1)}$, and similarly in the low injection case $q^{(2)}$ for the variables $\rho^{(2)}$ and $\phi^{(2)}$.  This enforces feasibility of the $\rho_{ij}(t,x_{ij})$ only with respect to the extreme scenarios corresponding to the lower and upper envelopes $q_i^{(1)}(t)$ and $q_i^{(2)}(t)$ of the uncertain nodal parameters $q_i(t)$.  By Theorem~\ref{thm:pde}, as long as $q_i^{(1)}(t) \geq \hat{q}_i(t) \geq q_i^{(2)}(t)$ holds for all $i\in\mathcal V$, then the corresponding densities $\rho_{ij}(t,x_{ij})$ must also satisfy $\rho_{ij}^{(1)}(t,x_{ij}) \geq \rho_{ij}(t,x_{ij}) \geq \rho_{ij}^{(2)}(t,x_{ij})$ for all $t\in[0,T]$, and therefore the constraints in \eqref{eq:rob_ineq1} and \eqref{eq:rob_ineq2} are satisfied as well.    Furthermore, if the objective function is also monotone with respect to the nodal input parameters, one can obtain a simplified representation of min-max robust optimal control, along the lines of \cite{vuffray15cdc}.

In this example we have considered a formulation that is robust with respect to the so-called interval constraints in Eq.~\eqref{box} as nodal parameter functions are constrained independently from each others. However, the motoniticy properties can be applied to reduce arbitrary uncertainty sets in a robust formulation. In this case motoniticy properties guarantees that realizations of nodal parameter functions $q_i(t)$ can be removed from the uncertainty set if there exists there exists $q_i^{(1)}(t)$ and $q_i^{(2)}(t)$ in the uncertainty set for which
$q^{(1)}_j(t) \geq q_j(t) \geq q^{(2)}_j(t)$,  $\fA j\in\cV \text{ and } t\in[0,T]$.
\subsection{Real-Time Nodal Monitoring Policy} \label{subsec:nodalpolicy}
In \eqref{eq:ocp1} we provide a tractable formulation for the robust control problem for which the interval  envelopes for the uncertainty in the nodal injection parameters $q_i(t)$ are known a priori.  We now suppose that there exists a feasible solution to the simplified representation for the robust optimal control problem \eqref{eq:ocp1}, and that we have determined the optimal control vectors $\underline{\alpha}(t)$ and $\overline{\alpha}(t)$ that maintain system feasibility under all possible instances of the uncertain injection profiles.  A simple monitoring mechanism is proposed here, which can be used to respond to and correct real-time deviations outside of the predicted uncertainty envelope.  If, for instance, an error in uncertainty quantification causes $q_i(t)$ for some $i \in \cV$ to deviate outside of the envelope $[q_i^{(2)}(t),q_i^{(1)}(t)]$ used to specify the box constraints \eqref{box}, then application of the control solution $\underline{\alpha}(t)$ and $\overline{\alpha}(t)$ to problem \eqref{eq:ocp1} no longer guarantees an acceptable solution because the assumptions of Theorem~\ref{thm:pde} no longer hold.  One way to compensate for such variation is to fix the nodal input parameters $q_i(t)$ to the upper or lower bounds, $q_i^{(1)}(t)$ or $q_i^{(2)}(t)$, as appropriate, at the time when the crossing would occur. However, this may be too conservative for enforcing the density inequality constraints \eqref{eq:det_ineq}, because $q_i(t)$ may be outside of the feasibility envelope without violation of the density constraints, which are critical in practice. However, Theorem~\ref{thm:pde} still applies, and this facilitates a much less conservative Nodal Monitoring Policy (NMP) to reactively maintain system densities within feasible values.

\vspace{0.1in}
\noindent \emph{\bf Nodal Monitoring Policy (NMP):} Let $\overline{\cal{S}} \subset \cV$ be the subset of nodes where the upper bound $q_i(t) \leq q_i^{(1)}(t)$ on nodal parameters is violated, and suppose that $\underline{\cal{S}} \subset \cV$ is the subset of nodes where the lower bound $q_i(t) \geq q_i^{(2)}(t)$ is violated. Let $\rho_i^{(1)}(t)$ and $\rho_i^{(2)}(t)$ for $i\in\cV$ be the collections of nodal density solutions that correspond to fixing the nodal input parameters at $q_i^{(1)}(t)$ and $q_i^{(2)}(t)$, respectively. The policy is to monitor the real-time density profiles $\rho_i(t)$ for $i \in \underline{\cal{S}} \cup \overline{\cal{S}}$. If no crossing points are encountered between the real-time solution $\rho_i(t)$ and the upper and lower density profile solutions $\rho_i^{(1)}(t)$ and $\rho_i^{(2)}(t)$, respectively, then the system is safe with respect to the density limits. Alternatively, suppose we encounter a crossing point at time $t_c$ at node $i \in \overline{\cal{S}}$. Then the policy will be to reset the nodal injection parameter at $i$ to $q_i^{(1)}(t)$ while leaving the remaining nodal parameters unchanged.  This simple action is sufficient to guarantee that the system-wide density profiles remains within $\rho_{\min}$ and $\rho_{\max}$. This is contained in the following corollary.

 \begin{corollary}[Sufficiency of Nodal Monitoring Policy] Suppose that the NMP is implemented as described above. It follows that $\rho^{(2)}(t,x_{ij}) \leq \rho(t,x_{ij}) \leq \rho^{(1)}(t, x_{ij})$ for all $t \in [0,T]$ and $(i,j) \in \cE$.
\end{corollary}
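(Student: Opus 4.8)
The plan is to establish the two one-sided inequalities $\rho(t,x_{ij})\le\rho^{(1)}(t,x_{ij})$ and $\rho(t,x_{ij})\ge\rho^{(2)}(t,x_{ij})$ separately. By the symmetry between the upper envelope (violation set $\overline{\mathcal{S}}$, reset target $q^{(1)}$) and the lower envelope (violation set $\underline{\mathcal{S}}$, reset target $q^{(2)}$), it suffices to prove the upper bound and transcribe the argument for the lower one. Throughout I take the three solutions $\rho$, $\rho^{(1)}$, $\rho^{(2)}$ to share the common measured initial state, so that $\rho^{(2)}(0,x_{ij})\le\rho(0,x_{ij})\le\rho^{(1)}(0,x_{ij})$ holds at $t=0$.

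The conceptual backbone is Theorem~\ref{thm:pde} applied to the pair consisting of the real-time solution $\rho$ and the upper-envelope solution $\rho^{(1)}$, with the vertex set partitioned as $\mathcal{S}=\cV\setminus\overline{\mathcal{S}}$ and $\cV\setminus\mathcal{S}=\overline{\mathcal{S}}$. At nodes $i\in\mathcal{S}$ the upper injection bound is respected, so $q_i(t)\le q_i^{(1)}(t)$ supplies the injection ordering required by the theorem; at nodes $i\in\overline{\mathcal{S}}$ the injection ordering may fail, and there I must instead furnish the nodal density ordering $\rho_i(t)\le\rho_i^{(1)}(t)$ as the boundary hypothesis. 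The NMP is precisely the device that enforces this: by monitoring $\rho_i(t)$ against $\rho_i^{(1)}(t)$ for $i\in\overline{\mathcal{S}}$ and resetting $q_i$ to $q_i^{(1)}$ at the instant of an incipient crossing, it guarantees $\rho_i(t)\le\rho_i^{(1)}(t)$ for every monitored node and all $t\in[0,T]$.

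Because the policy switches the governing injection at each reset instant, I would make the argument rigorous through the first-crossing mechanism rather than a single application of Theorem~\ref{thm:pde} with a fixed $\mathcal{S}$. Arguing by contradiction, let $t^\star$ be the infimum of times at which $\rho(t,x_{ij})>\rho^{(1)}(t,x_{ij})$ for some edge point. By the crossing-point property established in the proof of Theorem~\ref{thm:pde} --- that when monotone ordering is first lost, it is lost at a vertex --- the first violation occurs at some node $i$ with $\rho_i(t^\star)=\rho_i^{(1)}(t^\star)$. For such a node I distinguish two cases. If $i\in\mathcal{S}$ (injection ordering intact), then at the equality the adjacent edge flows are ordered, since the interior ordering holds on $[0,t^\star]$ and flows are determined from density and its spatial gradient through the strictly monotone dissipation relation \eqref{eq:in_dissipation_eq}; the Kirchhoff--Neumann balance \eqref{eq:in_nodal_continuity} together with $q_i^{(1)}(t^\star)\ge q_i(t^\star)$ then forces $\dot\rho_i^{(1)}(t^\star)\ge\dot\rho_i(t^\star)$, so $\rho_i$ cannot overtake $\rho_i^{(1)}$, exactly the quasimonotonicity exploited in Theorem~\ref{thm:pde}. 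If instead $i\in\overline{\mathcal{S}}$, the NMP triggers at $t^\star$ and resets $q_i$ to $q_i^{(1)}$, placing the node into the injection-ordered regime with equality and reducing it to the previous case. In either case no edge crossing can form at $t^\star$, contradicting its definition; hence $\rho\le\rho^{(1)}$ on $[0,T]$, and symmetrically $\rho\ge\rho^{(2)}$.

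The main obstacle I anticipate is the careful treatment of the reset dynamics: one must verify that the switched controlled system remains well-posed, that the reset indeed yields the one-sided (Dini) inequality $\dot\rho_i^{(1)}\ge\dot\rho_i$ at the switching instant, and that the crossings do not accumulate (no Zeno behavior), so that the first-crossing argument can be deployed on each inter-reset interval. Establishing the ordering of the adjacent edge flows at the crossing --- where the strict monotonicity of $f_{ij}$ in its gradient argument and the interior regularity from Assumption~\ref{assumption} enter --- is the technical heart, but it is inherited directly from the proof of Theorem~\ref{thm:pde}; the genuinely new content is the bookkeeping that the NMP keeps every monitored node in the density-ordered category exactly when it leaves the injection-ordered category.
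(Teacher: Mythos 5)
Your proposal is correct and takes essentially the same route as the paper, which proves the corollary in a single line as a direct application of Theorem~\ref{thm:pde}: by construction the NMP keeps every node either injection-ordered or density-ordered, so the theorem's hypotheses are satisfied on all of $[0,T]$. Your additional care about the switched dynamics at reset instants (well-posedness, one-sided derivative comparison, exclusion of Zeno behavior) goes beyond what the paper's one-sentence proof addresses, but it elaborates rather than replaces the underlying argument.
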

\begin{proof} The proof is a direct application of Theorem~\ref{thm:pde}, because by construction of the policy, the assumptions in the theorem are satisfied for all $t \in [0,T]$.
\end{proof}

\begin{figure}[!t]
\centering{
\includegraphics[width=\linewidth]{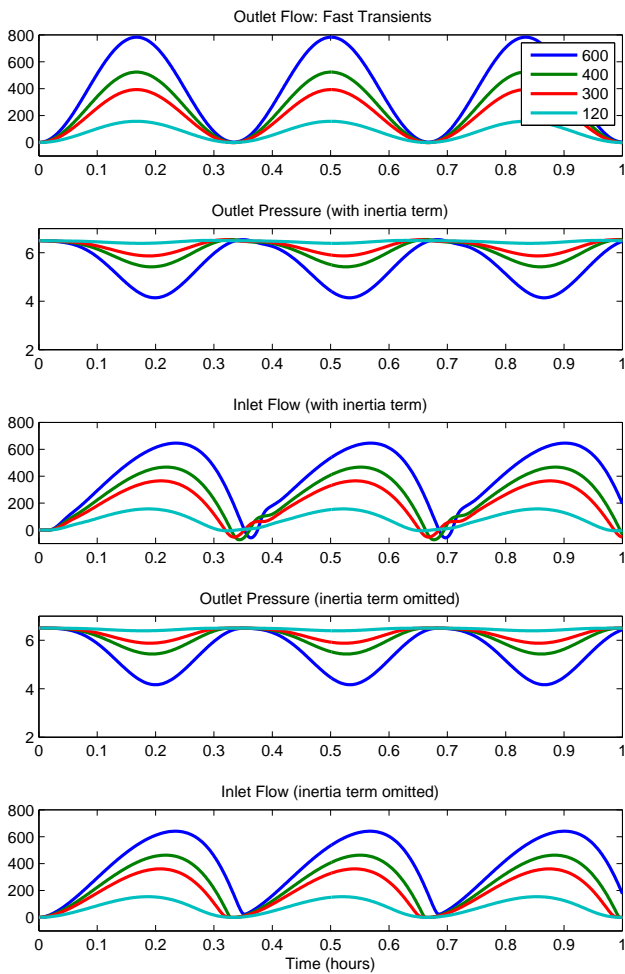} \caption{Flow in a single pipe with fast sinusoidal variation (3 cycles over 1 hour) in outlet flow with maximum magnitudes of 120, 300, 400, and 600 kg/s. From top to bottom: Outlet flow boundary condition; simulated outlet pressure (with inertia term); inlet flow (with inertia term); outlet pressure (inertia term omitted); inlet flow (inertia term omitted). The monotonicity theorem is not practical for the fast transient regime.} \label{fig:onepipefast} 
} 
\end{figure}

\begin{figure}[!t]
\centering{
\includegraphics[width=\linewidth]{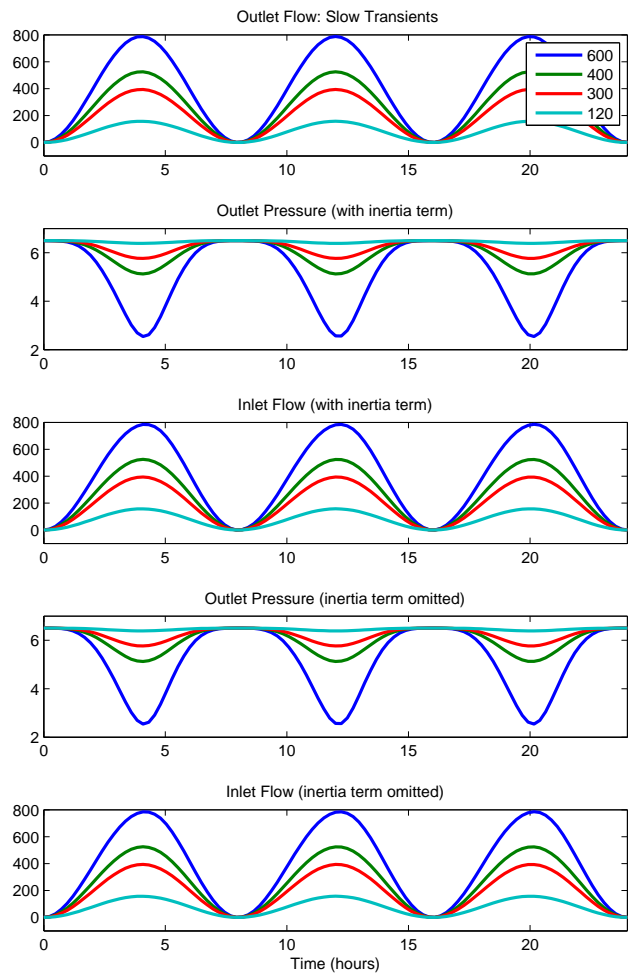} \caption{Flow in a single pipe with slow sinusoidal variation (3 cycles over 24 hours) in outlet flow with maximum magnitudes of 120, 300, 400, and 600 kg/s. From top to bottom: Outlet flow boundary condition; simulated outlet pressure (with inertia term); inlet flow (with inertia term); outlet pressure (inertia term omitted); inlet flow (inertia term omitted). Simulations including and omitting the inertia term $\partial \phi/\partial t$ are indistinguishable, so the monotonicity theorem can be applied for slow transients which are typical in practice.} \label{fig:onepipeslow} 
}
\end{figure}

\anatoly{
\section{Computational demonstrations} \label{sec:example}

In this section, we examine several computational examples in order to demonstrate the main result, as expressed in Theorem \ref{thm:pde} and Theorem \ref{thm:ode}.  In order to connect the physical modeling in Section \ref{sec:background} to the mathematical formulation in Section \ref{sec:formulation}, we first study a single pipe example to illustrate the restrictions on the transient regime in which the presented theory is relevant.  Then, we examine perturbations to an IBVP that is synthesized based on a model of an actual working gas pipeline system and associated measurement time-series, in order to verify that the main results do hold for gas pipeline systems in the ordinary operating regime.  Additionally, several simulations involving a small test network are presented in order to demonstrate an application of the monotonicity property for real-time control and an illustration of the nodal monitoring policy in practice.

\begin{figure*}[th!]
\centering{
\includegraphics[width=.99\linewidth]{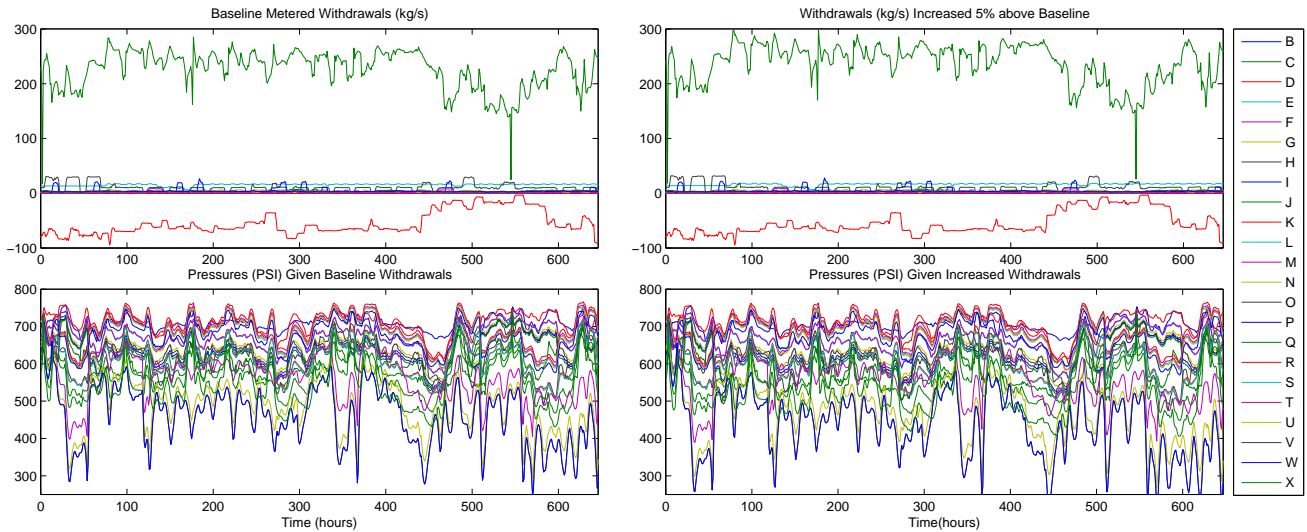} \vspace{-2ex} \caption{Testing the application of the monotone order propagation property in the normal operating regime of gas transmission pipelines. Top left: Baseline withdrawals (kg/s) custody transfer stations.  Top right: Increase of withdrawals above baseline by 5\%. Bottom left: Simulated pressure (gauge pounds per square inch (PSI)) solutions given baseline withdrawals. Note that 1 PSI equals 6894.76 Pascal.  Bottom right: Simulated pressure solutions given increased withdrawals. The letters in the legend labels correspond to the meter location tags in Figure \ref{fig:line200}.} \label{fig:line200sim} }
\vspace{-2ex}
\end{figure*}

\begin{figure}[t!]
\centering{
\includegraphics[width=.99\linewidth]{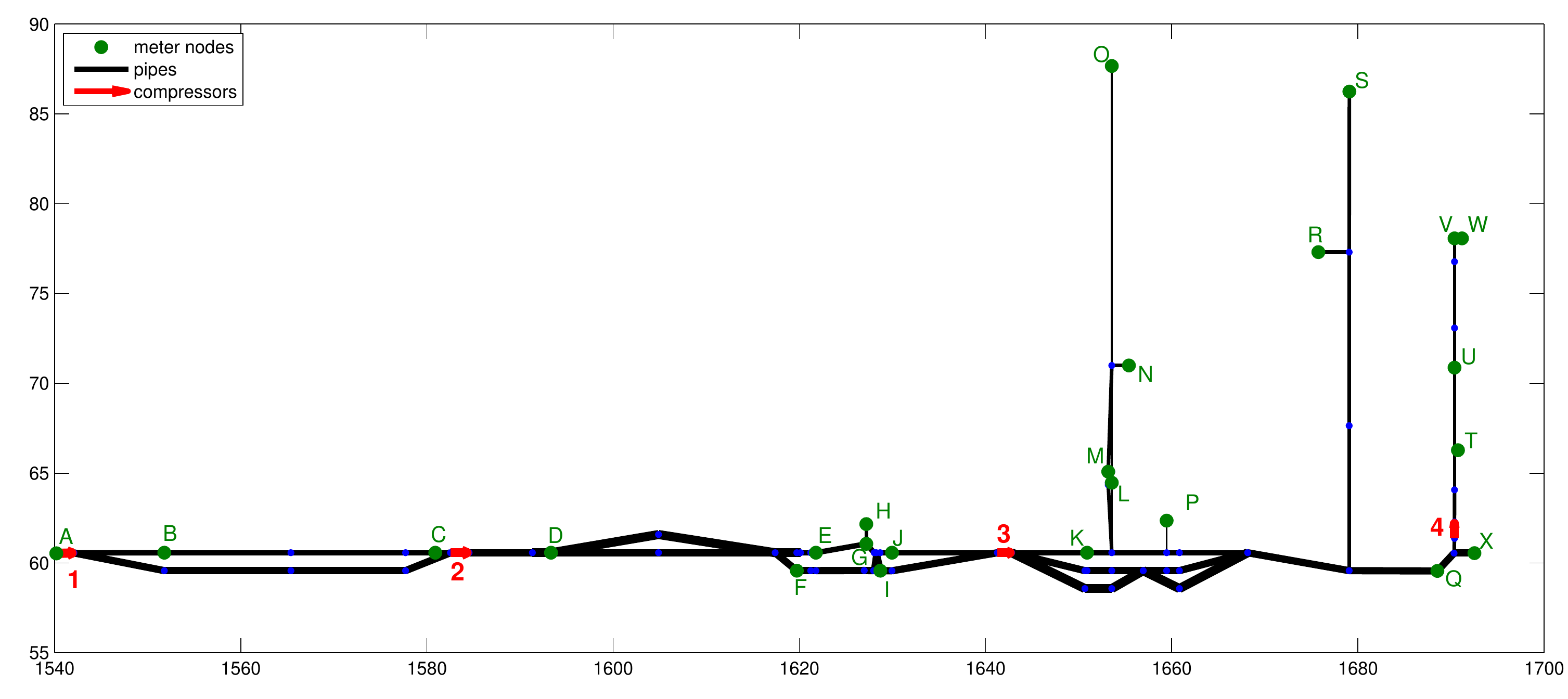} \caption{Schematic of pipeline subsystem \cite{zlotnik2017economic}. The green circles (labelled A to X) denote metered custody transfer locations. Nodes without meters are unlabelled. Red arrows (labelled 1 to 4) denote compressor stations.} \label{fig:line200} }
\vspace{-2ex}
\end{figure}

\begin{figure}[t!]
\centering{
\includegraphics[width=.99\linewidth]{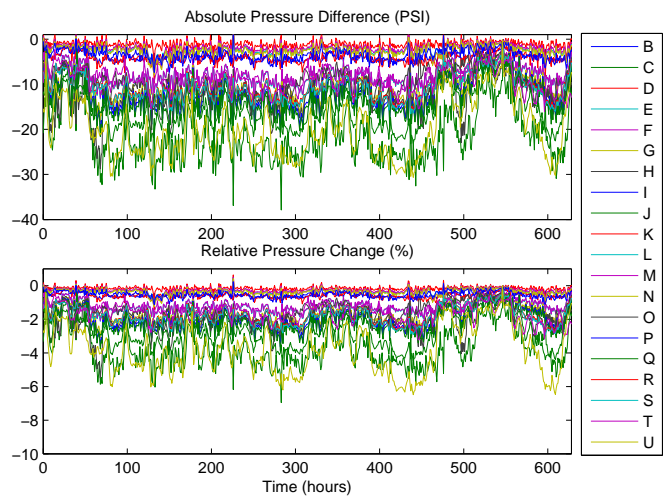}  \caption{Comparison of simulated pressure solutions at meter locations. Top: Absolute pressure difference (PSI).  Bottom: relative pressure difference (\%). With the exception of some minor deviations, the difference is negative. } \label{fig:line200comp} }
\vspace{-2ex}
\end{figure}


\subsection{Verification of the monotonicity property}
Consider a simulation of compressible gas flow on a single pipe as given by the system of equations \eqref{eq:gaspde1}, where the dependence of the gas compressibility factor $Z(p,T)$ on temperature is fixed while its dependence on pressure is given by the CNGA formula \cite{menon05,gyrya19} using a specific gas constant of $R=473.92$ J$\cdot$Kg$^{-1}\cdot(^\circ$K$)^{-1}$ and constant temperature $T=288.706$ $^\circ$K.  The pipe is 20 km in length, with a diameter of $0.9144$ meters and friction factor $\lambda=0.01$.  We fix the inlet pressure at 6.5 MPa, while sinusoidally varying the outlet flow in a series of simulations that are initialized with constant pressure in space and zero flow through the pipe.  We consider fast and slow variation in the boundary flow, and solve the IBVP for each regime including and then omitting the inertia term $\partial \phi/\partial t$ in \eqref{eq:gaspde1:b}.  All simulations are repeated for maximum magnitudes of the outlet flow that range from 120 kg/s to 600 kg/s.  The results are shown in Figures \ref{fig:onepipefast} and \ref{fig:onepipeslow} for fast and slow transients, respectively.  By the main result in Theorem \ref{thm:pde}, we expect that monotone ordering of the outlet flow should result in monotone ordering of the outlet pressure and inlet flow.  It is evident that this result does not hold for fast transients while for slow transients it does.  The key conclusion is that the monotone order propagation property does not hold in the physical regime of transient flows with fast boundary changes in which the inertia of the gas affects its momentum on the same order of magnitude as the resistance cause by turbulent drag. This is expected since the assertion in the theorems do not hold in this non-standard regime.

\begin{figure*}[t!]
\centering{
\includegraphics[width=.275\linewidth]{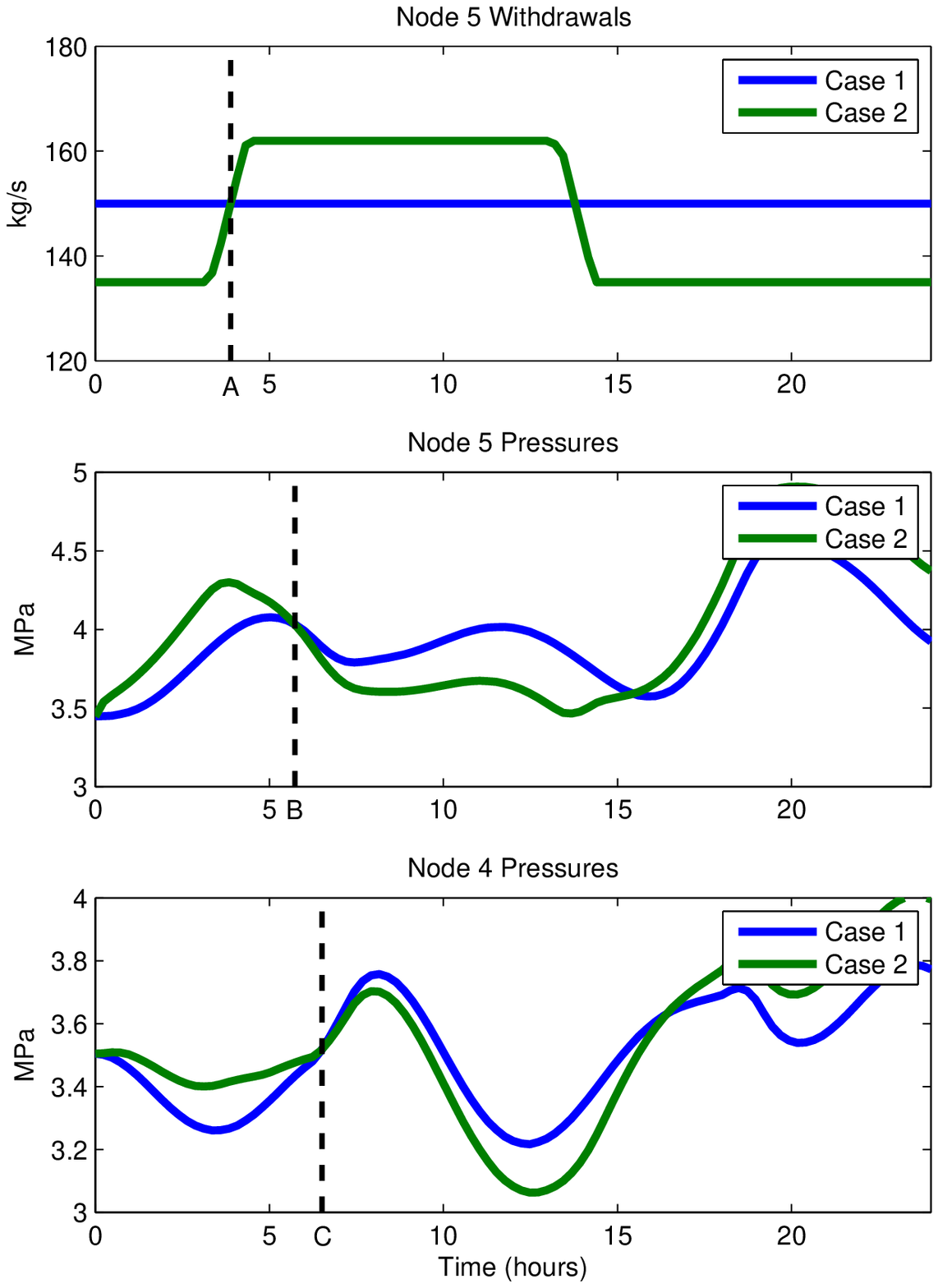} \includegraphics[width=.275\linewidth]{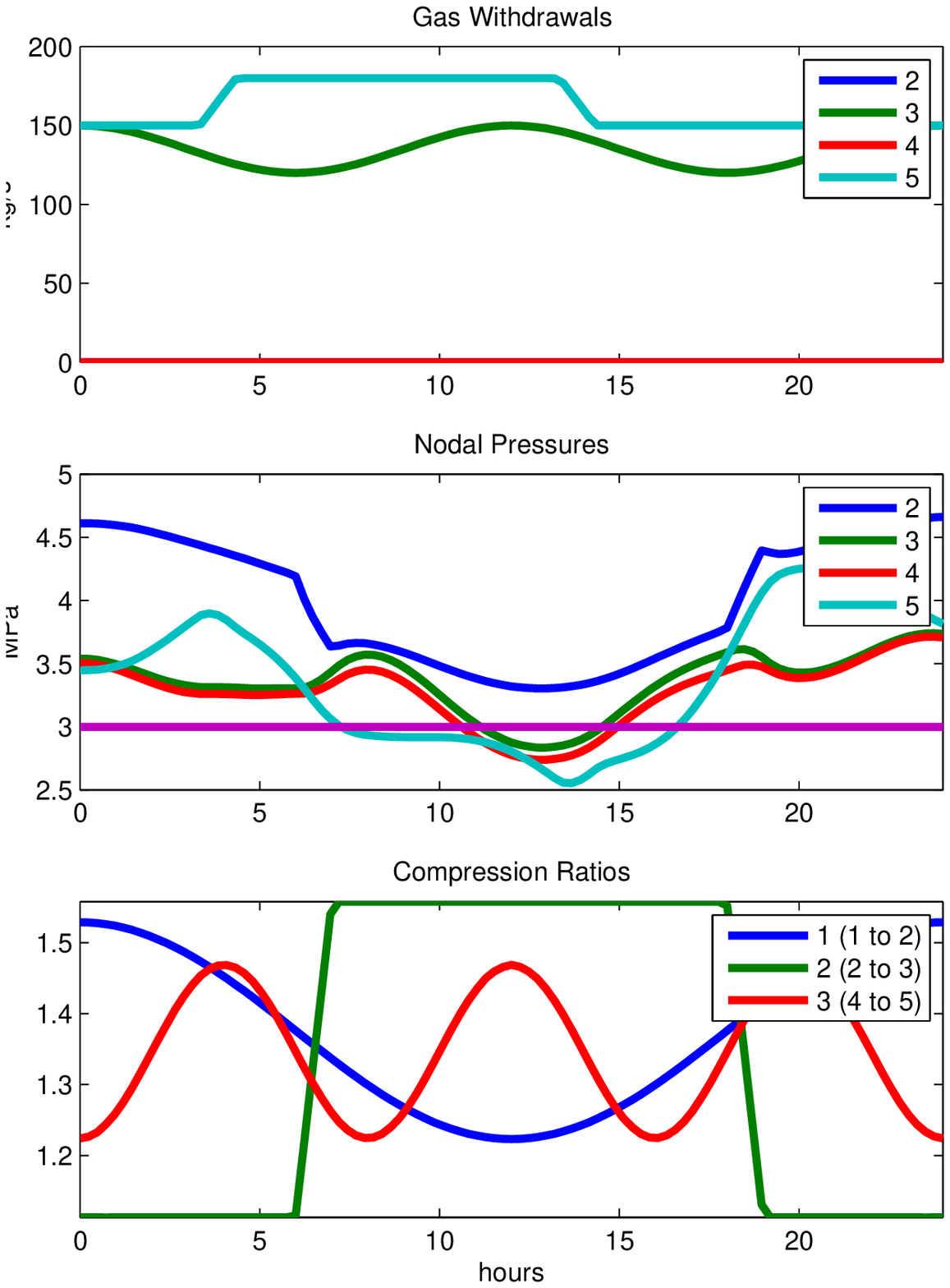} 
\includegraphics[width=.425\linewidth]{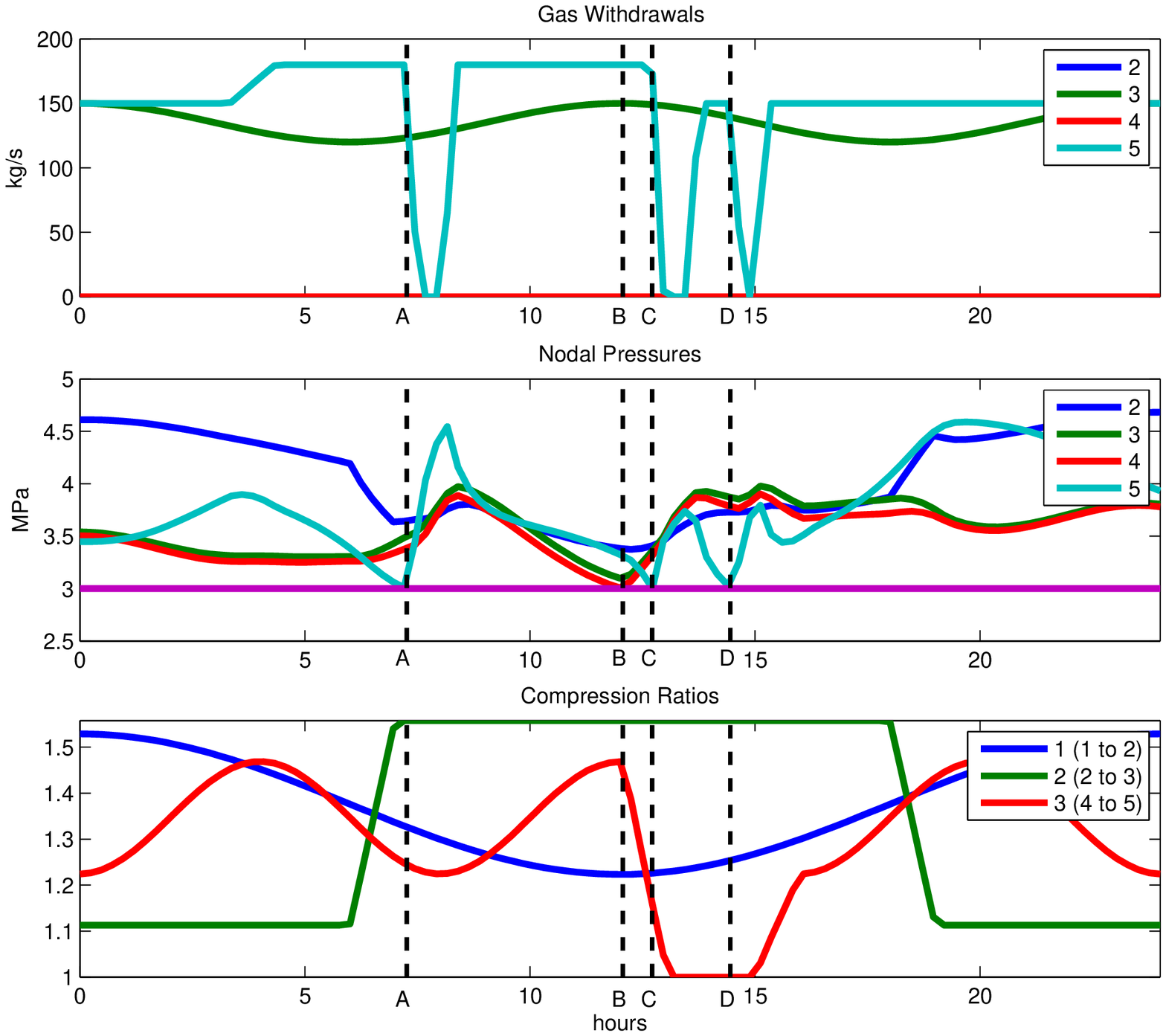} \caption{Left column: Illustration of the first crossing point property on the 5-node test network with two cases differing only by their withdrawals at node 5. Top: gas withdrawals at node 5 with order reversal between Case 1 (blue) and Case 2 (green) at $t=3.8888$ hours (A). Middle: pressures at node 5 with first crossing point at $t=5.7325$ hours (B).  Bottom: pressures at node 4 show crossing point later, at $t=6.5087$ hours (C).  Center and Right columns: Illustration of a monotonicty based real-time control policy on the 5-node test network. Center: Withdrawals at node 3 (green) and 5 (turquoise) that results in violations of the lower pressure constraint (magenta) at node 3 (green), 4 (red) and 5 (turquoise); Right: Application of the policy results in feasible pressures throughout the simulation. Control actions (black vertical line) are taken at the time when a pressure bound is reached and at the node where the pressure bound is reached. Top: gas flow withdrawals; Middle: resulting nodal pressures (minimum bound at 3 MPa is indicated in magenta); Bottom: oriented compression ratios. The policy is invoked over 15 minute ramps; (A): At $t=7.26$ hours, flow to node 5 is ramped down to zero for an hour; (B): at $t=12.06$ hours, compressor 3 is shut down; (C): at $t=12.71$ hours, flow to node 5 is curtailed for another hour; (D): at $t=14.45$ hours, flow to node 5 is curtailed again.}\label{fig:smallnetnmp}}  
\vspace{-2ex}
\end{figure*}

\begin{figure}[t!]
\centering{
\includegraphics[width=.9\linewidth]{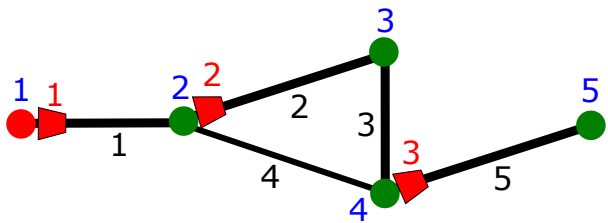} \caption{Schematic of 5-node test network \cite{gyrya19}. The green circles (labelled 2 to 5) denote flow nodes where withdrawals are given, and red circle denotes a slack node where pressure is given. Red boxes (labelled 1 to 3) denote compressors.} \label{fig:model5} }
\vspace{-2ex}
\end{figure}


Next we consider a model of an actual working pipeline subsystem with topology illustrated in Figure \ref{fig:line200}, which was simplified to basic components from a capacity planning model as part of a previous study that was used to validate optimal control modeling \cite{zlotnik2017economic}.  The network consists of 95 pipes with total length of 444.25 miles (714.95 km), which are connected at 78 reduced model nodes, and through which flow is boosted by 4 compressors. In addition, we obtain time-series data from a supervisory control and data acquisition (SCADA) system used for operation of the pipeline, which contains measurements of pressure, temperature, and metered flow leaving the system at 31 custody transfer stations and check measurement locations. Check measurements at the 4 compressor stations include pressure and temperature at suction and discharge. From this data, we use the CNGA equation of state \cite{menon05} to compute mass flow withdrawals at network nodes.  We then synthesize an IBVP where we start with steady-state initial conditions and transition to the boundary flows in the data.  For the simulation, gas density is specified at the node labeled A in Figure \ref{fig:line200}, and flows leaving the system are specified at nodes labeled B to X.  In Figure \ref{fig:line200sim}, we show the baseline flow withdrawals at B to X and resulting pressures, as well as the same simulation with all positive withdrawals increased by 5\%.  We see that, for the most part, increasing the withdrawals decreases pressures, and this is clearly displayed in Figure \ref{fig:line200comp}, which shows the absolute and relative difference between the pressure trajectories resulting given the increased and baseline outflows. The minor violations of the monotonicity property can be attributed to small contributions from the inertial term mentioned in the previous paragraph as well as numerical precision.

This simulation test provides an empirical validation of the monotone order propagation property for gas pipeline flows in the normal operating regime of relatively slowly varying transients.  The key conclusion from this analysis is that, if the pressure trajectories that result given the baseline and increased flow profiles are both considered feasible, then any uncertain flows that are bounded by these two profiles result in feasible pressures as well. With this property, solutions of optimal control problems of the form \eqref{eq:ocp1} can be certified as feasible for any such interval uncertainty.


\subsection{Nodal monitoring and real-time control}
The next computational study demonstrates the first crossing-point property, an application of the nodal monitoring policy, and an application of monotonicity for real-time control. We consider a small test network that is specified in detail in a previous study \cite{gyrya19}, and that consists of 5 nodes connected by 5 pipes and with 3 nodally located compressors as shown in Figure \ref{fig:model5}.  Here we reproduce a similar IBVP as specified in the previous study, except with the compression ratio of compressor 3 reduced to $c_3(t)=c_3(0)\cdot(1+\frac{1}{10}(1-\cos(6\pi t/T_0)))$.  Recall that the NMP takes advantage of the first crossing of trajectories at network nodes in situations when monotone ordering does not hold.  The first crossing point property is illustrated in the left column of Figure \ref{fig:smallnetnmp}, where changing ordering of withdrawals at a node for two IBVP simulations causes the first crossing of ordering in pressure for the two cases to occur at the same node. The crossing at node $4$ denoted by C is after that of node $5$ denoted by B. The crossing of all other nodes (not shown in the figure) either do not exist or are significantly later in time. This property leads to a spatial localization in monitoring requirement, where all monitoring resources can be focused on only a small subset of nodes where the injections are outside the intervals used during the planning phase.

Figure \ref{fig:smallnetnmp} (right) also shows the results of two additional IBVP simulations. These figures demonstrate an application of monotonicity property for real-time monitoring and control, as opposed to the robust planning problem outlined in \eqref{eq:ocp1}.  Suppose that 3 MPa is specified as a minimum operating pressure bound for all nodes in the system, in which case an initial (center) simulation results in pressures below the allowable minimum. Based on the monotonicity principle, whenever a pressure lower bound is approaching, either the gas withdrawal at that node is curtailed, or the compression from that node into adjacent pipes (which also serves to decrease the rate of flow leaving that node) is reduced. The resulting simulation is presented on the right, in which all nodal pressures remain above the lower bound of 3 MPa. Note that total curtailments of flow and total shutdowns of compressors are undesirable in practice as such measures can significantly increase flow volatility.
}

%

\section{Conclusions} \label{sec:conc}

In this study we have considered formulations for modeling physical flows on networks characterized by a class of parabolic partial differential equation (PDE) systems.  We derive conditions for monotonicity properties that are particularly advantageous when applied to optimize the flows of commodities over networks subject to uncertainty in parameters.  In particular we consider a class of problems where actuator control protocols and/or a subset of withdrawals are selected to optimize an economic or operational cost objective subject to nodal commodity withdrawal limits and bounds on the nodal potentials, and with interval uncertainty in another subset of flows that results in a semi-infinite problem formulation.   When the potentials are monotone functions of the withdrawals, the infinite collection of constraints that enforce the potential limits can be satisfied if the two bounds for the minimum and maximum values of the uncertainty interval are satisfied.  

Here we have derived theorems that establish the desired monotonicity properties using specialized approaches for the steady-state and transient flow regimes, as well as a monotonicity-preserving spatial discretization scheme for initial boundary value problems (IBVPs).  Specifically, we have proved for the system considered that ordering properties of solutions to the IBVP are preserved when the initial conditions and time-varying coupling law parameters at vertices are appropriately ordered.  The results have implications for robust optimization and optimal control formulations and real-time monitoring of uncertain dynamic flows on networks. Computational studies were used to demonstrate the relevance of the main results to the control of gas pipeline flows. The first key outcome is a tractable robust optimal control formulation that requires enforcing the physical flow constraints for only the extremal scenario as well as a nominal scenario.  The second implication is the ability to formulate a monitoring policy that actually allows temporary excursions of the nodal injections outside of the nominal interval uncertainty envelope.  The theory presented here, together with policies that account for system-specific engineering and operational requirements, enables tractable algorithms for robust optimization and optimal control of such systems under uncertainty.

\appendices 

\section{Proof of Theorem~\ref{thm:ss}} \label{sec:proof_ss}
The proof presented in this section is based on fundamental properties of network flows. A special case of this proof was presented in \cite{vuffray15cdc}. The proof consists of three main components. The first, which is stated in the following proposition, is a property of any network flows that obey the standard flow conservation equations in \eqref{eq:ss_flow_conservation}.
Because the flows $\phi_{ij}$ are constant along the length of each edge $(i,j)\in\cE$, i.e., independent of $x_{ij}$, it is convenient for ease of exposition to define the skew-symmetric variables given by $\phi_{ji} = -\phi_{ij}$ for all $(i,j) \in \cE$. We also define the combined neighborhood for each vertex $j \in \cV$ as $\partial j = \partial_+ j \cup \partial_- j$.

\begin{proposition}[Aquarius Theorem]
\label{thm:aquarius}
Consider two sets of flow injections $q_i^{(1)}$ and $q_i^{(2)}$ and let $\mathcal{S} \subseteq \cV$ be an arbitrary subset of $\cV$ such that for all $i \in \mathcal{S}$ we have $q_i^{(1)} \geq q_i^{(2)}$.
Let $(\phi_{ij}^{(1)})_{(i,j) \in \cE}$ and $(\phi_{ij}^{(2)})_{(i,j) \in \cE}$ be any solution to the flow conservation equations \eqref{eq:ss_flow_conservation} corresponding to the inputs $q^{(1)}$ and $q^{(2)}$. Then for every node $i \in \mathcal{S}$ there exists a non-intersecting path $i_1, \ldots, i_n$ where $i_1 \in \cV \setminus \mathcal{S}$ and $i_n = i$ such that $\phi_{i_l i_{l+1}}^{(1)} \leq \phi_{i_l i_{l+1}}^{(2)}$ for all $l = 1,\ldots, n-1$. Moreover, if $q_i^{(1)} > q_i^{(2)}$ then all the above flow inequalities are strict.
\end{proposition}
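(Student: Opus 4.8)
The plan is to reduce the claim to a statement about the \emph{difference flow} and prove it with a reachability-plus-cut argument. I would first set $\delta_{jk} \triangleq \phi_{jk}^{(2)} - \phi_{jk}^{(1)}$ for every ordered pair, so that $\delta$ inherits the skew-symmetry $\delta_{kj} = -\delta_{jk}$. Writing the conservation law \eqref{eq:ss_flow_conservation} in skew-symmetric form expresses the net outflow at a node as $\sum_{k\in\partial j}\phi_{jk} = q_j$; subtracting the two solutions yields the difference balance
\begin{align}
\sum_{k \in \partial j} \delta_{jk} = q_j^{(2)} - q_j^{(1)} =: d_j, \quad \fA j \in \cV.
\end{align}
Since the path condition $\phi_{i_l i_{l+1}}^{(1)} \leq \phi_{i_l i_{l+1}}^{(2)}$ is precisely $\delta_{i_l i_{l+1}} \geq 0$, the task becomes: for each $i \in \mathcal{S}$ produce a simple path ending at $i$ and starting in $\cV \setminus \mathcal{S}$ along which $\delta \geq 0$ in the direction of travel. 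The key sign fact is that $i \in \mathcal{S}$ forces $d_i \leq 0$.

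Next I would introduce the backward-reachability set $R$ consisting of all nodes $u$ from which $i$ can be reached by a walk whose every edge $(u_l, u_{l+1})$ satisfies $\delta_{u_l u_{l+1}} \geq 0$; trivially $i \in R$. The goal reduces to showing $R \cap (\cV \setminus \mathcal{S}) \neq \0$, since any witnessing walk can then be pruned to a non-intersecting (simple) path without destroying the sign condition on its edges. I would proceed by contradiction, assuming $R \subseteq \mathcal{S}$. For any cut edge with $u \in R$ and $w \in \partial u \setminus R$, the fact that $w \notin R$ means that prepending $w \to u$ to a good walk from $u$ must fail, forcing $\delta_{wu} < 0$ and hence $\delta_{uw} > 0$.

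The crux is then a cut summation: summing the difference balance over $u \in R$, every edge internal to $R$ contributes $\delta_{uu'} + \delta_{u'u} = 0$ by skew-symmetry, leaving only the cut edges,
\begin{align}
\sum_{u \in R} d_u = \sum_{\substack{u \in R,\, w \notin R \\ w \in \partial u}} \delta_{uw}.
\end{align}
The right-hand side is strictly positive whenever the cut is nonempty, while $R \subseteq \mathcal{S}$ makes the left-hand side $\leq 0$, a contradiction unless the cut is empty, i.e. $R = \cV$. The residual case $R = \cV \subseteq \mathcal{S}$ gives $\mathcal{S} = \cV$, where a global sum of \eqref{eq:ss_flow_conservation} yields $\sum_j (q_j^{(1)} - q_j^{(2)}) = 0$ and thus $q^{(1)} = q^{(2)}$, so the statement is vacuous (under the standing presence of at least one density-prescribed node, $\cV \setminus \mathcal{S} \neq \0$). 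For the strict assertion I would rerun the argument verbatim with $R$ defined through strict edges $\delta > 0$: boundary edges then only give $\delta_{uw} \geq 0$, but $d_i < 0$ at $i \in R$ makes the left-hand side strictly negative, again contradicting $\sum_{u\in R} d_u \geq 0$ and yielding a path with all inequalities strict.

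The main obstacle I anticipate is the sign bookkeeping rather than any deep idea: getting the directed conservation law \eqref{eq:ss_flow_conservation} into the skew-symmetric net-outflow form so that internal edges cancel exactly in the cut sum, and verifying that the boundary of $R$ carries the \emph{strict} sign $\delta_{uw} > 0$ needed to contradict $\sum_{u\in R} d_u \leq 0$. Once the cut identity is established, extracting a simple path and disposing of the degenerate $\mathcal{S} = \cV$ case are routine.
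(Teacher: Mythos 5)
Your proof is correct and is essentially the paper's own argument: both grow a set of nodes reachable from $i$ along edges on which the flow difference has the favorable sign, then sum the conservation law over that set so that skew-symmetry cancels the internal edges and the ordering of the injections forces a good cut edge; the paper merely does this layer-by-layer (via the BFS sets $B_k$ and $A_k$) rather than in one shot with the full backward-reachability set, and shares the same implicit assumption that every component of the graph meets $\cV\setminus\mathcal{S}$. One point in your favor: you explicitly verify the strict-inequality clause (rerunning the cut argument with strict edges and using $d_i<0$), whereas the paper's proof states this claim but never actually argues it.
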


The second and third pieces of the proof contained in the following propositions are properties of the dissipation equation \eqref{eq:ss_dissipation}. 
\begin{proposition}
\label{thm:edge_monotonicity}
Fix $(i,j) \in \cE$. Consider two scenarios such that $\rho_i^{(1)} \geq \rho_i^{(2)}$ and $\phi_{ij}^{(1)} \leq \phi_{ij}^{(2)}$. Then we have $\rho_j^{(1)} \geq \rho_j^{(2)}$ and $\rho_{ij}^{(1)}(x)\geq\rho_{ij}^{(2)}(x)$ for all $x\in [0,L_{ij}]$. 
Similarly if we have two scenarios such that $\rho_j^{(1)} \geq \rho_j^{(2)}$ and $\phi_{ji}^{(1)} \leq \phi_{ji}^{(2)}$, then $\rho_i^{(1)} \geq \rho_i^{(2)}$ and $\rho_{ij}^{(1)}(x)\geq\rho_{ij}^{(2)}(x)$ for all $x\in [0,L_{ij}]$.
\end{proposition}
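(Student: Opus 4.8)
The plan is to reduce the statement to a one-dimensional comparison of the two density profiles along the single edge $(i,j)$, and then transfer the resulting boundary ordering back to the nodal densities through the monotone compatibility maps. First I would use the steady-state compatibility conditions \eqref{eq:ss_compressor}: since $\underline{\alpha}_{ij}$ is increasing and $\rho_i^{(1)} \ge \rho_i^{(2)}$, the left-boundary values obey $\rho_{ij}^{(1)}(0)=\underline{\alpha}_{ij}(\rho_i^{(1)}) \ge \underline{\alpha}_{ij}(\rho_i^{(2)})=\rho_{ij}^{(2)}(0)$, recalling the definition \eqref{eq:end_p_def}. Symmetrically, once I have shown the pointwise profile ordering $\rho_{ij}^{(1)}(x)\ge\rho_{ij}^{(2)}(x)$ on all of $[0,L_{ij}]$, evaluating at $x=L_{ij}$ gives $\overline{\alpha}_{ij}(\rho_j^{(1)})\ge\overline{\alpha}_{ij}(\rho_j^{(2)})$, and since $\overline{\alpha}_{ij}$ is strictly increasing (hence order-reflecting) this yields $\rho_j^{(1)}\ge\rho_j^{(2)}$. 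Thus the entire proposition rests on the single claim that ordered left-boundary data together with ordered edge flows propagate to an ordered profile across the edge.

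To set up this comparison, I would exploit that on each edge the steady-state flow $\phi_{ij}$ is constant in $x$, so that the dissipation equation \eqref{eq:ss_dissipation} becomes an autonomous first-order ODE for $\rho_{ij}(x)$ once $\phi_{ij}$ is fixed. Because $f_{ij}(u,v)$ is strictly increasing in its gradient argument $v$, the relation $\phi_{ij}=f_{ij}(\rho_{ij},\partial_x\rho_{ij})$ can be solved uniquely for $\partial_x\rho_{ij}$ as a function of $\rho_{ij}$ and $\phi_{ij}$, and Assumption~\ref{thm:ss_assumption} guarantees that the corresponding trajectory starting from the prescribed left-boundary value exists and is unique on $[0,L_{ij}]$. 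Writing $w(x)=\rho_{ij}^{(1)}(x)-\rho_{ij}^{(2)}(x)$, I have $w(0)\ge 0$, and the goal is to show $w\ge 0$ throughout.

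The heart of the argument is a first-contact contradiction. Suppose $w(x_1)<0$ for some $x_1$, and let $x_0=\sup\{x\le x_1:\ w(x)\ge 0\}$; by continuity $w(x_0)=0$, so the two profiles share a common density $u_0$ at $x_0$ while carrying the ordered flows $\phi_{ij}^{(1)}\le\phi_{ij}^{(2)}$, and $w<0$ immediately to the right of $x_0$. Feeding the common density $u_0$ and the two ordered flows into the strictly monotone dissipation relation pins down the ordering of the two gradients $\partial_x\rho_{ij}^{(1)}(x_0)$ and $\partial_x\rho_{ij}^{(2)}(x_0)$: when $\phi_{ij}^{(1)}<\phi_{ij}^{(2)}$, strictness forces a strict separation $w'(x_0)>0$, which is incompatible with $w$ dipping below zero just to the right of $x_0$. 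The remaining degenerate case $\phi_{ij}^{(1)}=\phi_{ij}^{(2)}$ is handled separately: then both profiles solve the same ODE and agree at $x_0$, so uniqueness (Assumption~\ref{thm:ss_assumption}) forces $\rho_{ij}^{(1)}\equiv\rho_{ij}^{(2)}$ on the whole edge, again precluding $w<0$. Either way we reach a contradiction, establishing $w\ge 0$ on $[0,L_{ij}]$ and hence the edge-density ordering.

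Finally, the second (mirror) statement, in which node $j$ and the flow $\phi_{ji}$ are prescribed, follows by the same argument after reorienting the edge: using the skew-symmetry $\phi_{ji}=-\phi_{ij}$, the hypothesis $\phi_{ji}^{(1)}\le\phi_{ji}^{(2)}$ becomes $\phi_{ij}^{(1)}\ge\phi_{ij}^{(2)}$, and integrating the edge ODE from $x=L_{ij}$ back toward $x=0$ with the ordered right-boundary data $\rho_{ij}^{(1)}(L_{ij})\ge\rho_{ij}^{(2)}(L_{ij})$ reproduces the identical comparison. The step I expect to be the main obstacle is precisely this contact-point analysis: correctly extracting the gradient ordering from the dissipation relation (which requires care with the sign convention relating $f_{ij}$ to $\phi_{ij}$) and securing a strict separation, together with cleanly disposing of the equal-flow degeneracy through the uniqueness assumption rather than through a monotonicity margin.
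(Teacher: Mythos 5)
Your proposal is correct and follows essentially the same route as the paper: transfer the nodal ordering to the left boundary via the increasing compatibility map, run a first-spatial-crossing contradiction on the edge using strict monotonicity of $f_{ij}$ in the gradient argument, dispose of the equal-flow degeneracy via the uniqueness in Assumption~\ref{thm:ss_assumption}, and handle the mirror statement by reorientation. The only cosmetic difference is that the paper cases on the ordering of the gradients at the crossing point and contradicts the flow hypothesis, whereas you case on the flow ordering and contradict the crossing behavior --- the same argument run in the opposite direction.
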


\begin{proposition}
\label{thm:inside_edge_monotonicity}
Fix $(i,j) \in \cE$. Consider two scenarios such that $\rho_i^{(1)} \geq \rho_i^{(2)}$ and $\rho_j^{(1)} \geq \rho_j^{(2)}$.
Then we have $\rho_{ij}^{(1)}(x) \geq \rho_{ij}^{(2)}(x)$ for all $x_{ij} \in [0,L_{ij}]$.
\end{proposition}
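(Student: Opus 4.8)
The plan is to prove the statement by contradiction, working entirely on the single fixed edge $(i,j)$, where in steady state the flows $\phi_{ij}^{(1)}$ and $\phi_{ij}^{(2)}$ are each constant in $x$. First I would translate the nodal hypotheses into boundary orderings of the two density profiles: since the compatibility functions $\underline{\alpha}_{ij}$ and $\overline{\alpha}_{ij}$ in \eqref{eq:ss_compressor} are increasing, the assumptions $\rho_i^{(1)} \geq \rho_i^{(2)}$ and $\rho_j^{(1)} \geq \rho_j^{(2)}$ give $\rho_{ij}^{(1)}(0) \geq \rho_{ij}^{(2)}(0)$ and $\rho_{ij}^{(1)}(L_{ij}) \geq \rho_{ij}^{(2)}(L_{ij})$. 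Hence the difference $d(x) := \rho_{ij}^{(1)}(x) - \rho_{ij}^{(2)}(x)$ is continuous and nonnegative at both endpoints, and the goal is to show $d \geq 0$ throughout $[0,L_{ij}]$.

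The step I expect to be the main obstacle is extracting a usable comparison of the two flows. The naive approach of examining an interior minimum of $d$ fails: at such a minimum the two profiles have equal derivatives but \emph{different} density values, and since $f_{ij}$ couples the flow to both the density and its derivative, one cannot isolate a comparison between $\phi_{ij}^{(1)}$ and $\phi_{ij}^{(2)}$. The resolution is to argue instead at \emph{crossing points}, where the two density values coincide. Supposing $d(x) < 0$ somewhere, I would take a connected component $(a,b) \subset (0,L_{ij})$ of the open set $\{x : d(x) < 0\}$; since $d \geq 0$ at both endpoints of $[0,L_{ij}]$, continuity forces $d(a) = d(b) = 0$, so the profiles share common densities $\rho_a = \rho_{ij}^{(1)}(a) = \rho_{ij}^{(2)}(a)$ and $\rho_b = \rho_{ij}^{(1)}(b) = \rho_{ij}^{(2)}(b)$ at these two crossing points, with $d < 0$ strictly in between.

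At $a$ the difference $d$ leaves zero downward, so $(\rho_{ij}^{(1)})'(a) \leq (\rho_{ij}^{(2)})'(a)$; evaluating the dissipation relation \eqref{eq:ss_dissipation} at the common density $\rho_a$ and invoking the \textbf{strict} monotonicity of $f_{ij}$ in its second argument converts this derivative inequality directly into $\phi_{ij}^{(1)} = f_{ij}(\rho_a,(\rho_{ij}^{(1)})'(a)) \leq f_{ij}(\rho_a,(\rho_{ij}^{(2)})'(a)) = \phi_{ij}^{(2)}$. At $b$ the difference returns to zero from below, so $(\rho_{ij}^{(1)})'(b) \geq (\rho_{ij}^{(2)})'(b)$, and the identical argument at $\rho_b$ yields $\phi_{ij}^{(1)} \geq \phi_{ij}^{(2)}$. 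Combining the two gives $\phi_{ij}^{(1)} = \phi_{ij}^{(2)}$, so on $(a,b)$ both profiles solve the \emph{same} first-order dissipation ODE \eqref{eq:ss_dissipation} and agree at $x = a$. By the uniqueness of trajectories guaranteed by Assumption~\ref{thm:ss_assumption}, they must coincide on $(a,b)$, contradicting $d < 0$ there. This contradiction establishes $d \geq 0$ on all of $[0,L_{ij}]$, which is exactly the claim; the one-sided derivative comparisons at $a$ and $b$ are routine once the crossing-point framing converts sign information about the derivative gap into a sign comparison of the flows.
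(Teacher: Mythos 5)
Your proposal is correct and follows essentially the same route as the paper: establish the boundary ordering from the monotone compatibility functions, locate two crossing points bracketing a region where the ordering fails, use the monotonicity of $f_{ij}$ in its derivative argument to deduce opposite flow inequalities at the two crossings and hence $\phi_{ij}^{(1)}=\phi_{ij}^{(2)}$, and then invoke the uniqueness in Assumption~\ref{thm:ss_assumption} to force the profiles to coincide, a contradiction. Your connected-component framing of the set $\{x: d(x)<0\}$ is a slightly cleaner way of producing the two crossing points than the paper's phrasing, but the substance is identical.
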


We prove Theorem~\ref{thm:ss} using the above three propositions. 

\begin{proof}[Proof of Theorem~\ref{thm:ss}]
We first prove that the nodal densities satisfy $\rho_i^{(1)} \geq \rho_i^{(2)}$ for all $i\in \mathcal{S}$. By the premise of the theorem, we have for all $i \in \cV \setminus \mathcal{S}$ that $\rho_i^{(1)} \geq \rho_i^{(2)}$. Fix any $j \in \mathcal{S}$. By Proposition~\ref{thm:aquarius} there exists a non-intersecting path $i_1, \ldots, i_n$ where $i_1 \in \cV \setminus \mathcal{S}$ and $i_n = j$ such that $\phi_{i_l i_{l+1}}^{(1)} \leq \phi_{i_l i_{l+1}}^{(2)}$ for all $l = 1,\ldots, n-1$. The rest is proved by induction. For the base case in the induction, since $i_1 \in \cV \setminus \mathcal{S}$ we have that $\rho_{i_1}^{(1)} \geq \rho_{i_1}^{(2)}$. We make the induction hypothesis that $\rho_{i_l}^{(1)} \geq \rho_{i_l}^{(2)}$ for $1 \leq l \leq n-1$. Now consider the edge $(i_l, i_{l+1})$. By induction hypothesis, we have $\rho_{i_l}^{(1)} \geq \rho_{i_l}^{(2)}$ and by the choice of the path we have $\phi_{i_l i_{l+1}}^{(1)} \leq \phi_{i_l i_{l+1}}^{(2)}$. Applying Proposition~\ref{thm:edge_monotonicity} for the edge $(i_l,i_{l+1})$ we get that $\rho_{i_{l+1}}^{(1)} \geq \rho_{i_{l+1}}^{(2)}$, and the induction is complete.

Now that it has been established that all nodal densities satisfy $\rho_i^{(1)} \geq \rho_i^{(2)}$, it remains to apply Proposition~\ref{thm:inside_edge_monotonicity} at every edge to end the proof.
\end{proof}

\begin{proof}[Proof of Proposition~\ref{thm:aquarius}]
We construct the required path by induction. Fix $i \in S$. Define a non-decreasing sequence of subsets $B_k \subset \cV$ and $A_k = \cup_{l=1}^{k}B_k$ as follows:
\begin{align*}
&B_1 = \{ i\} \\
&B_{k+1} = \{v \in \cV \setminus A_k \ \mid \ \exists j\in \partial v \cap B_k \mbox{ s.t. } \phi_{vj}^{(1)} \leq \phi_{vj}^{(2)}  \}.
\end{align*}
We will show that the sets defined above are strictly increasing until we encounter some vertex in $\cV \setminus S$. More precisely, we will show that there exists a positive integer $K \geq 1$ such that $B_k \neq \emptyset$ for all $1 \leq k \leq K$, and the first $K-1$ sets satisfy $B_i \cap (\cV \setminus S) =  \emptyset$ for $1\leq i \leq K-1$ and $B_K \cap (\cV \setminus S) \neq \emptyset$. We prove the above statement by induction. For the base case we have by definition $B_1 = \{i\} \neq \emptyset$. Let $\cE_k \subset \cE$ denote all the edges connecting two vertices in $A_k$, 
\begin{align}
\cE_k = \{(u,v) \in \cE \ \mid \ u,v \in A_k \}.
\end{align}
By summing the continuity equation \eqref{eq:ss_flow_conservation} over the vertices in $A_k$ we get 
\begin{align*}
    0 &= \sum_{u \in A_k} \left( q_u^{(1)} + \sum_{j \in \partial_+ u}\phi_{ju}^{(1)} + \sum_{j \in \partial_- u}\phi_{ju}^{(1)} \right) \\
    &= \sum_{u \in A_k} q_u^{(1)} + \sum_{u \in A_k}\left(   \sum_{j \in \partial_+ u}\phi_{ju}^{(1)} + \sum_{j \in \partial_- u}\phi_{ju}^{(1)} \right)  \\
    &= \sum_{u \in A_k} q_u^{(1)} + \sum_{(u,v) \in \cE_k} (\phi_{uv}^{(1)} + \phi_{vu}^{(1)}) + \sum_{u \in A_k, v \in \partial u \setminus A_k} \phi_{vu}^{(1)}\\
    &\stackrel{(a)}{=} \sum_{u \in A_k}q_u^{(1)} + \sum_{{u \in A_k},{v \in \partial u \setminus A_k} } \phi_{vu}^{(1)} ,
\end{align*}
where $(a)$ follows because the rest of the terms in the summation cancel out by the skew-symmetry property of $\phi$. Hence,
\begin{align*}
    \sum_{{u \in A_k},{v \in \partial u \setminus A_k} } \phi_{vu}^{(1)} &= -\sum_{u \in A_k} q_u^{(1)} \leq -\sum_{u \in A_k} q_u^{(2)} \\
    &= \sum_{{u \in A_k},{v \in \partial u \setminus A_k} } \phi_{vu}^{(2)} 
\end{align*}
The above inequality implies that there exists $u_k \in A_k$ and $v_k \in \partial u_k \setminus A_k$  such that $\phi_{v_k u_k}^{(1)} \leq \phi_{v_k u_k}^{(2)}$. 
Observe that $u_k \notin A_{k-1}$ because otherwise by definition of $A_k$ we must have $v_k \in B_k \subset A_k$ which contradicts the fact that $v_k \notin A_k$. Also by construction we have $u_k \in A_k$. Hence $u_k \in A_k \setminus A_{k-1} = B_k$ and $v_{k} \in B_{k+1} \neq \emptyset$. This concludes the proof by induction that $B_k \neq \emptyset$ for all $1 \leq k \leq K$. In addition, because $A_k$ is a strictly increasing sequence of sets, we must have $K \leq |\cV|$.

The construction of the required path is now straightforward. By construction, for each $1 \leq k \leq K$ and for each $ \in B_k$ there exists a path of length $k$ from $u$ to $i$. Because $B_K \cap (\cV \setminus S) \neq \emptyset$ holds, it follows that the proof is complete.
\end{proof}

\begin{proof}[Proof of Proposition~\ref{thm:edge_monotonicity}]
Consider the first case where $\rho_i^{(1)} \geq \rho_i^{(2)}$ and $\phi_{ij}^{(1)}\leq \phi_{ij}^{(2)}$. Because $\rho_i^{(1)} \geq \rho_i^{(2)}$ and $\underline{\alpha}_{ij}(.)$ is an increasing function, we have $\rho_{ij}^{(1)}(0) \geq \rho_{ij}^{(2)}(0)$. We prove the proposition by contradiction. Define the first spatial crossing point as 
\begin{align}
x_{ij}^c \triangleq \sup \{x_{ij} \in I_{ij} \, : \, \rho_{ij}^{(1)}(\hat{x}_{ij}) \geq \rho_{ij}^{(2)}(\hat{x}_{ij}) \,\, \forall \hat{x}_{ij} \in [0,x_{ij}]  \}.
\end{align}
If $x_{ij}^c = L_{ij}$ then we have $\bar{\rho}_{ij}^{(1)} \geq \bar{\rho}_{ij}^{(2)}$ and since $\bar{\alpha}_{ij}(.)$ is increasing we have $\rho_j^{(1)} \geq \rho_j^{(2)}$ and the claim in the proposition holds. For the sake of contradiction, assume that $x_{ij}^c < L_{ij}$. Then we must have 
\begin{align}
\rho_{ij}^{(1)}(x_{ij}^c) = \rho_{ij}^{(2)}(x_{ij}^c) \label{eq:ss_crossing_equality}
\end{align}
and there exists $\delta > 0$ such that 
\begin{align}
\rho_{ij}^{(1)}(x_{ij}) < \rho_{ij}^{(2)}(x_{ij}) \quad \forall x_{ij} \in (x_{ij}^c,x_{ij}^c+\delta). \label{eq:ss_crossing}
\end{align}
From the above two equations, we must have 
\begin{align}
    \pp_x \rho_{ij}^{(1)}(x_{ij}^c) \leq \pp_x \rho_{ij}^{(2)}(x_{ij}^c). \label{eq:ss_first_derivative_ordering}
\end{align}
We can classify the condition in \eqref{eq:ss_first_derivative_ordering} into two cases: 
\begin{align}
\mbox{Case 1: } \quad &\pp_x \rho_{ij}^{(1)}(x_{ij}^c) = \pp_x \rho_{ij}^{(2)}(x_{ij}^c), \label{eq:ss_case1} \\
\mbox{Case 2: } \quad &\pp_x \rho_{ij}^{(1)}(x_{ij}^c) < \pp_x \rho_{ij}^{(2)}(x_{ij}^c). \label{eq:ss_case2}
\end{align}

Observe that if Case 1 holds, then we have $\phi_{ij}^{(1)} = \phi_{ij}^{(2)} = \phi_{ij}$ and hence $\rho_{ij}^{(1)}(x_{ij})$ and $\rho_{ij}^{(2)}(x_{ij})$ are solutions to the initial value problem given by 
\begin{align}
    f_{ij}(\rho_{ij}(x_{ij}), \pp_x \rho_{ij}(x_{ij})) + \phi_{ij} = 0, \quad x_{ij} \in (x_{ij}^c,x_{ij}^c+\delta),
\end{align}
with the initial value given by $\rho_{ij}(x_{ij}^c) = \rho_{ij}^{(1)}(x_{ij}^c) = \rho_{ij}^{(2)}(x_{ij}^c)$. By the uniqueness property in Assumption~\ref{thm:ss_assumption}, we must have 
\begin{align}
\rho_{ij}^{(1)}(x_{ij}) = \rho_{ij}^{(2)}(x_{ij}) \quad \forall x_{ij} \in (x_{ij}^c,x_{ij}^c+\delta),
\end{align}
which is in direct contradiction with \eqref{eq:ss_crossing}. This eliminates Case 1. Assuming that Case 2 holds, we have from \eqref{eq:ss_crossing_equality} and \eqref{eq:ss_case2} that 
\begin{align}
\phi_{ij}^{(1)} =& -f_{ij}(\rho_{ij}^{(1)}(x_{ij}^c), \pp_x \rho_{ij}^{(1)}(x_{ij}^c)) \nonumber \\
    \stackrel{(a)}{>}& -f_{ij}(\rho_{ij}^{(2)}(x_{ij}^c), \pp_x \rho_{ij}^{(2)}(x_{ij}^c)) = \phi_{ij}^{(2)}, \label{eq:ss_contradiction}
\end{align}
where $(a)$ holds because $f_{ij}$ is increasing in its second argument. The conclusion in \eqref{eq:ss_contradiction} violates the premises of the proposition and thus our proof by contradiction is complete.

The second part of the proposition where $\rho_j^{(1)} \geq \rho_j^{(2)}$ and $\phi_{ji}^{(1)} \leq \phi_{ji}^{(2)}$ is proved using similar arguments.
\end{proof}

\begin{proof}[Proof of Proposition~\ref{thm:inside_edge_monotonicity}]
Since $\underline{\alpha}_{ij}(.)$ and $\overline{\alpha}_{ij}(.)$ are increasing functions, we have that $\rho_{ij}^{(1)}(0) \geq \rho_{ij}^{(2)}(0)$ and $\rho_{ij}^{(1)}(L_{ij}) \geq \rho_{ij}^{(2)}(L_{ij})$. For the sake of contradiction, if the proposition is not true then the two nodal densities must cross at at least two points 
$x_{c_1} \in [0,L_{ij}]$ and  $x_{c_2} \in [0,L_{ij}]$. The first crossing point $x_{c_1}$ must satisfy 
$\rho_{ij}^{(1)}(x_{c_1}) = \rho_{ij}^{(2)}(x_{c_1})$ and $\partial_x\rho_{ij}^{(1)}(x_{c_1}) \leq \partial_x\rho_{ij}^{(2)}(x_{c_1})$. Therefore,
\begin{align}
    \phi_{ij}^{(1)} &= - f_{ij}(\rho_{ij}^{(1)}(x_{c_1}) ,\partial_x\rho_{ij}^{(1)}(x_{c_1})) \nonumber\\
    &\stackrel{(a)}{\geq} - f_{ij}(\rho_{ij}^{(2)}(x_{c_1}) ,\partial_x\rho_{ij}^{(2)}(x_{c_1})) = \phi_{ij}^{(2)},    \label{eq:prop6proof1}
\end{align}
where (a) holds because $f_{ij}$ is increasing in its second argument.
At the same time, the second crossing point $x_{c_2}$ must satisfy exactly the opposite relations, i.e.,
$\rho_{ij}^{(1)}(x_{c_2}) = \rho_{ij}^{(2)}(x_{c_2})$ and $\partial_x\rho_{ij}^{(1)}(x_{c_2}) \geq \partial_x\rho_{ij}^{(2)}(x_{c_2})$, resulting in
\begin{align}
     \phi_{ij}^{(1)} &= - f_{ij}(\rho_{ij}^{(1)}(x_{c_2}) ,\partial_x\rho_{ij}^{(1)}(x_{c_2})) \nonumber\\
    &{\leq} - f_{ij}(\rho_{ij}^{(2)}(x_{c_2}) ,\partial_x\rho_{ij}^{(2)}(x_{c_2})) = \phi_{ij}^{(2)}. \label{eq:prop6proof2}
\end{align}
Combining \eqref{eq:prop6proof1} and \eqref{eq:prop6proof2}, we get $\phi_{ij}^{(1)} = \phi_{ij}^{(2)}$. However, this means that by the uniqueness property in Assumption~\ref{thm:ss_assumption}, we must have $\rho_{ij}^{(1)}(x) = \rho_{ij}^{(2)}(x)$ for all $x \in (x_{c_1},L]$. Since $x_{c_1}$ is the first crossing point, we also have that $\rho_{ij}^{(1)}(x) \geq \rho_{ij}^{(2)}(x)$ for all $x \in [0,x_{c_1}]$, thus showing that there is no crossing of the densities. 


\end{proof}

\section{Proof of Theorem~\ref{thm:pde}}  \label{sec:proof_pde}
In this section we present a direct proof of monotone ordering of solutions to IBVPs involving the equations \eqref{eq:in_continuity}-\eqref{eq:in_pressure_comp} and initial conditions \eqref{eq:in_initial_condition}.
This result is a generalization of the result previously proposed in \cite{misra16mtns}. The approach taken here is based on the notion of crossing points for solutions of the dynamic equations, at which certain variable values change ordering.  

\subsection{Crossing Points}
The proof is constructed by establishing the non-existence of the so-called ``first crossing point". We formalize this definition below.
\begin{definition} \label{def:first_crossing}
 Let $\rho_{ij}^{(1)}(t,x_{ij})$ and $\rho_{ij}^{(2)}(t,x_{ij})$ be the unique classical solutions corresponding to the initial conditions $\rho_{ij}^{(1)}(0,x_{ij})$ and  $\rho_{ij}^{(2)}(0,x_{ij})$ and injections $q^{(1)}_i(t)$ and $q^{(2)}_i(t)$
 respectively. Further suppose that for all $(i,j) \in \cE$ and for all $x_{ij} \in I_{ij}$ we have $\rho_{ij}^{(1)}(0,x_{ij}) \geq \rho_{ij}^{(2)}(0,x_{ij})$. Then a tuple $(t_c, x_c)$, where $t_c \in (0,T]$ and $x_c \in I_{ij}$ for some $(i,j) \in \cE$ is called a first crossing point if
 \begin{align}
 t_c = \sup \{t \in &[0,T] \ : \ \rho_{ij}^{(1)}(t,x_{ij}) \geq \rho_{ij}^{(2)}(t,x_{ij}) \nonumber  \\
 & \fA (i,j) \in \cE, \, x_{ij} \in I_{ij}  \}, \label{eq:crossing_time}
 \end{align}
 and, there exists a $\delta > 0$, such that
 \begin{align}
 \rho_{ij}^{(1)}(t,x_{c}) < \rho_{ij}^{(2)}(t,x_{c}),  \label{eq:positive_derivative}
 \end{align}
 for all $t \in (t_c, t_c + \delta)$.
\end{definition}

First crossing points need not be unique because there may be multiple coordinates $x_c$ that satisfy the above definition. The crossing time $t_c$ however, is unique by definition. Note that whenever there is no crossing point in the system dynamics until some time $t_0$, then the ordering of the initial conditions must be preserved till $t_0$. In the rest of the section, we prove the appropriate non-existence of first crossing points in order to establish Theorem~\ref{thm:pde}.
As a remark to the reader, a weaker notion of crossing point that only considers the spatial coordinate was utilized in the proof of Proposition~\ref{thm:edge_monotonicity} in Section~\ref{sec:steady_state}.

\subsection{Technical Lemmas}
In this section, we prove several technical lemmas that will be useful to establish Theorem~\ref{thm:pde}. Let $\rho_{ij, \epsilon}^{(1)}(t,x_{ij})$ be the solution to the perturbed system in  \eqref{perturbed1}-\eqref{perturbed2}
with nodal input parameters set at $q_{i}^{(1)}(t)$. The following lemmas prove that there can be no crossing point of the perturbed solution $\rho_{ij, \epsilon}^{(1)}(t,x_{ij})$ and $\rho_{ij}^{(2)}(0,x_{ij})$ either in the interior of an edge or at a vertex $i \in \cV$ where $q_i^{(1)}(t) \geq q_i^{(2)}(t)$.

\begin{lemma} \label{lem:perturbed_interior}
Let $(i,j) \in \cE$ be an edge. Suppose that for all $x_{ij} \in I_{ij}$ we have $\rho_{ij}^{(1)}(0,x_{ij}) \geq \rho_{ij}^{(2)}(0,x_{ij})$.
Then there is no first crossing point  $(t_c, x_c)$ between the perturbed solution $\rho_{ij, \epsilon}^{(1)}(t,x_{ij})$ and  $\rho_{ij}^{(2)}(t,x_{ij})$ such that $t_c \in [0,T]$ and $0 < x_c < L_{ij}$.
\end{lemma}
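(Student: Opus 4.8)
The plan is to reduce the coupled edge dynamics to a single scalar quasilinear parabolic equation and then run a Nagumo-type (comparison/strong maximum principle) argument by contradiction. Substituting the dissipation relation \eqref{perturbed2} into the continuity equation \eqref{perturbed1} eliminates the flow and yields, for the perturbed solution, $\partial_t \rho^{(1)}_{ij,\epsilon} = \partial_x f_{ij}(t,\rho^{(1)}_{ij,\epsilon},\partial_x \rho^{(1)}_{ij,\epsilon}) + \epsilon$, while the unperturbed solution obeys $\partial_t \rho^{(2)}_{ij} = \partial_x f_{ij}(t,\rho^{(2)}_{ij},\partial_x \rho^{(2)}_{ij})$. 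Expanding the spatial divergence by the chain rule puts each equation in quasilinear form $\partial_t \rho = (\partial_u f_{ij})\,\partial_x \rho + (\partial_v f_{ij})\,\partial_{xx}\rho$ (with an extra $+\epsilon$ for the perturbed one), whose diffusion coefficient satisfies $\partial_v f_{ij} \geq 0$ by the monotonicity of $f_{ij}$ in its third argument. I would then set $w(t,x) = \rho^{(1)}_{ij,\epsilon}(t,x) - \rho^{(2)}_{ij}(t,x)$ and assume, for contradiction, that an interior first crossing point $(t_c,x_c)$ with $0 < x_c < L_{ij}$ exists.

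Next I would extract the local sign conditions at $(t_c,x_c)$. By the supremum definition of $t_c$ (Definition~\ref{def:first_crossing}) and continuity of the classical solutions, $w(t_c,x) \geq 0$ for all $x \in I_{ij}$; combined with \eqref{eq:positive_derivative} and continuity this forces $w(t_c,x_c) = 0$, so that $x_c$ is an interior spatial minimizer of $w(t_c,\cdot)$. The first- and second-order interior-minimum conditions give $\partial_x w(t_c,x_c) = 0$ and $\partial_{xx} w(t_c,x_c) \geq 0$. Moreover, since $w(t,x_c) < 0 = w(t_c,x_c)$ for $t \in (t_c,t_c+\delta)$, every right-hand difference quotient is negative, so $\partial_t w(t_c,x_c) \leq 0$.

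The crux is evaluating $\partial_t w$ at the crossing. The key observation is that $w(t_c,x_c) = 0$ and $\partial_x w(t_c,x_c) = 0$ mean that $\rho^{(1)}_{ij,\epsilon} = \rho^{(2)}_{ij}$ and $\partial_x \rho^{(1)}_{ij,\epsilon} = \partial_x \rho^{(2)}_{ij}$ hold at $(t_c,x_c)$; hence the arguments of $f_{ij}$, and therefore the coefficients $\partial_u f_{ij}$ and $\partial_v f_{ij}$, coincide for the two solutions at that point. Subtracting the two quasilinear equations, the first-order terms cancel and I obtain $\partial_t w(t_c,x_c) = (\partial_v f_{ij})\,\partial_{xx} w(t_c,x_c) + \epsilon \geq \epsilon > 0$, using $\partial_v f_{ij} \geq 0$ together with $\partial_{xx} w(t_c,x_c) \geq 0$. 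This contradicts $\partial_t w(t_c,x_c) \leq 0$, so no interior first crossing point can exist.

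I expect the main obstacle to be the rigorous justification of the pointwise computations rather than the idea itself: the interior-minimum conditions, the one-sided time-derivative bound, and the chain-rule expansion all rely on the $C^2$ regularity of both solutions guaranteed by Assumption~\ref{assumption}(iii) and on sufficient smoothness of $f_{ij}$. The role of the perturbation $\epsilon$ deserves emphasis, since it is exactly what makes the argument work: without it one would only reach $\partial_t w \geq 0$, consistent with a touching (non-strict) crossing, whereas the strictly positive source $+\epsilon$ breaks the tie and yields a genuine contradiction. This is precisely why the lemma is stated for the perturbed solution $\rho^{(1)}_{ij,\epsilon}$, with the limit $\epsilon \to 0$ supplied by Assumption~\ref{assumption}(iv) deferred to the eventual assembly of Theorem~\ref{thm:pde}.
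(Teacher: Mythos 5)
Your proposal is correct and follows essentially the same route as the paper's proof: the paper likewise derives $\partial_x w(t_c,x_c)=0$, $\partial_{xx}w(t_c,x_c)\geq 0$, and $\partial_t w(t_c,x_c)\leq 0$ at an assumed interior first crossing, expands $\partial_x f_{ij}$ by the chain rule so the first-order terms cancel, and uses the strict positivity supplied by the $+\epsilon$ source to reach the same contradiction. Your reformulation as a single quasilinear parabolic equation in $w$ is only a cosmetic repackaging of the paper's substitution of the dissipation relation into the continuity equation.
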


\begin{lemma} \label{lem:nodal_cross_equiv}
Let $j \in \cV$.  Suppose that a first crossing occurs at $(t_c, x_c=0)$ on an edge $(j,k)\in\cE$ for one $k\in\partial_- j$ or at $(t_c, x_c=L_{ij})$ on an edge $(i,j)\in\cE$ for one $i\in\partial_+ j$.  Then a first crossing point occurs at $(t_c, x_c=0)$ for all edges $(j,k)\in\cE$ with $k\in\partial_- j$ and also at $(t_c, x_c=L_{ij})$ for all edges $(i,j)\in\cE$ with $i\in\partial_+ j$.
\end{lemma}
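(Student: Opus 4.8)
The plan is to exploit the fact that, at a fixed vertex $j$, the densities at the $j$-endpoints of every incident edge are images of the \emph{single} scalar nodal density $\rho_j(t)$ under the monotone compatibility maps \eqref{eq:in_pressure_comp}. Indeed, for each outgoing edge $(j,k)$ with $k\in\partial_- j$ the relevant endpoint is $x=0$ and $\underline{\rho}_{jk}(t)=\underline{\alpha}_{jk}(t,\rho_j(t))$, while for each incoming edge $(i,j)$ with $i\in\partial_+ j$ the relevant endpoint is $x=L_{ij}$ and $\overline{\rho}_{ij}(t)=\overline{\alpha}_{ij}(t,\rho_j(t))$. Both solutions $\rho^{(1)}$ and $\rho^{(2)}$ obey these relations with the \emph{same} functions $\underline{\alpha}_{jk},\overline{\alpha}_{ij}$, so the ordering of any one $j$-endpoint density is determined purely by the ordering of $\rho_j^{(1)}$ against $\rho_j^{(2)}$. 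The argument is therefore entirely pointwise in $t$ and uses no regularity of the interior PDE solution.

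First I would reduce to one case. Suppose the given first crossing is at $(t_c,L_{ij})$ for an incoming edge $(i,j)$; the case of an outgoing edge at $x_c=0$ is identical after exchanging $\overline{\alpha}_{ij}$ for $\underline{\alpha}_{jk}$. By the reversal condition \eqref{eq:positive_derivative} of Definition~\ref{def:first_crossing} there is a $\delta>0$ with
\begin{align*}
\overline{\rho}_{ij}^{(1)}(t)=\rho_{ij}^{(1)}(t,L_{ij})<\rho_{ij}^{(2)}(t,L_{ij})=\overline{\rho}_{ij}^{(2)}(t),\qquad t\in(t_c,t_c+\delta).
\end{align*}

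Next I would transfer this reversal \emph{into} the nodal density. Writing $\overline{\rho}_{ij}(t)=\overline{\alpha}_{ij}(t,\rho_j(t))$, if $\rho_j^{(1)}(t)\geq\rho_j^{(2)}(t)$ held at some such $t$, then monotonicity of $\overline{\alpha}_{ij}(t,\cdot)$ would give $\overline{\rho}_{ij}^{(1)}(t)\geq\overline{\rho}_{ij}^{(2)}(t)$, contradicting the display above; hence
\begin{align*}
\rho_j^{(1)}(t)<\rho_j^{(2)}(t),\qquad t\in(t_c,t_c+\delta).
\end{align*}
I would then push this strict ordering of $\rho_j$ back \emph{out} onto every other incident edge: for each $k\in\partial_- j$, strict monotonicity of $\underline{\alpha}_{jk}(t,\cdot)$ yields $\underline{\rho}_{jk}^{(1)}(t)=\underline{\alpha}_{jk}(t,\rho_j^{(1)}(t))<\underline{\alpha}_{jk}(t,\rho_j^{(2)}(t))=\underline{\rho}_{jk}^{(2)}(t)$ on $(t_c,t_c+\delta)$, and similarly $\overline{\rho}_{i'j}^{(1)}(t)<\overline{\rho}_{i'j}^{(2)}(t)$ for every $i'\in\partial_+ j$. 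Because $t_c$ is the \emph{global} first crossing time defined by the supremum in \eqref{eq:crossing_time}, shared across all edges and points, each of these endpoints inherits both defining properties of Definition~\ref{def:first_crossing} with the common $\delta$. Hence $(t_c,0)$ is a first crossing point on every outgoing edge and $(t_c,L_{i'j})$ on every incoming edge at $j$, which is the claim.

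The only delicate point, and the step I would flag, is the directionality of the two transfers. Carrying the inequality from an edge endpoint into the scalar $\rho_j$ only requires that the compatibility maps be order-preserving (used contrapositively), whereas carrying it back out to the remaining endpoints requires them to be \emph{strictly} increasing, so that a strict ordering of $\rho_j$ produces strict orderings of the endpoint densities. I would therefore make explicit the standing hypothesis stated after \eqref{eq:in_pressure_comp} that each $\underline{\alpha}_{ij}(t,\cdot)$ and $\overline{\alpha}_{ij}(t,\cdot)$ is strictly increasing; apart from this, the lemma is a direct consequence of the nodal compatibility relations.
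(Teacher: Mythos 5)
Your proof is correct and follows essentially the same route as the paper's: both arguments pivot through the single nodal density $\rho_j$ and the monotone compatibility maps in \eqref{eq:in_pressure_comp}, the paper by explicitly forming the bijective increasing composition $\overline{\alpha}_{ij}(t,\underline{\alpha}_{jk,t}^{-1}(\cdot))$ between endpoint densities, and you by a contrapositive transfer into $\rho_j$ followed by a strict-monotonicity transfer back out. The strictness caveat you flag is precisely the invertibility assumption the paper invokes in its own proof.
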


\begin{lemma} \label{lem:perturbed_vertex}
Let $j \in \cV$. Suppose that for all $k \in \partial_- j$ and for all $x_{jk} \in I_{jk}$ we have $\rho_{jk}^{(1)}(0,x_{jk}) \geq \rho_{jk}^{(2)}(0,x_{jk})$, and that for all $i \in \partial_+j$ and for all $x_{ij} \in I_{ij}$ we have $\rho_{ij}^{(1)}(0,x_{ij}) \geq 
\rho_{ij}^{(2)}(0,x_{ij})$. Further, suppose that $q_j^{(1)}(t) \geq q_j^{(2)}(t)$.
Then there is no first crossing point $(t_c, x_c)$ between the perturbed solution $\rho_{jk, \epsilon}^{(1)}(t,x_{jk})$ and $\rho_{jk}^{(2)}(t,x_{jk})$ such that $t_c \in [0,T]$ and $x_c = 0$ for any $k\in\partial_-j$ or $x_c=L_{ij}$ for any $i \in \partial_+j$.
\end{lemma}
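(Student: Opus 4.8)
The plan is to argue by contradiction: assume a first crossing point $(t_c,x_c)$ does occur at the vertex $j$, i.e. at $x_c=L_{ij}$ for some incoming edge $(i,j)$ with $i\in\partial_+ j$ or at $x_c=0$ for some outgoing edge $(j,k)$ with $k\in\partial_- j$. By Lemma~\ref{lem:nodal_cross_equiv} the crossing then occurs simultaneously at \emph{every} half-edge incident to $j$, so at time $t_c$ we have $\overline{\rho}_{ij,\epsilon}^{(1)}(t_c)=\overline{\rho}_{ij}^{(2)}(t_c)$ for all $i\in\partial_+ j$ and $\underline{\rho}_{jk,\epsilon}^{(1)}(t_c)=\underline{\rho}_{jk}^{(2)}(t_c)$ for all $k\in\partial_- j$. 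Writing $w_{ij}=\rho_{ij,\epsilon}^{(1)}-\rho_{ij}^{(2)}$, the defining property \eqref{eq:crossing_time} of the first crossing time gives $w_{ij}(t_c,\cdot)\geq 0$ on all edges, vanishing at the crossing coordinate. Since $w$ attains the boundary value $0$ from within a region where it is nonnegative, the one-sided spatial derivatives satisfy $\partial_x w_{ij}(t_c,L_{ij})\leq 0$ on incoming edges and $\partial_x w_{jk}(t_c,0)\geq 0$ on outgoing edges.

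The key step is to promote these one-sided derivative inequalities to exact equalities using the nodal flow balance. Because $f_{ij}$ is strictly increasing in its third argument and the boundary densities agree at $t_c$, the derivative signs translate into the flux orderings $\overline{\phi}_{ij,\epsilon}^{(1)}\geq\overline{\phi}_{ij}^{(2)}$ and $\underline{\phi}_{jk,\epsilon}^{(1)}\leq\underline{\phi}_{jk}^{(2)}$. Subtracting the Kirchhoff--Neumann balance \eqref{eq:in_nodal_continuity} written for the two solutions at $t_c$, and using the hypothesis $q_j^{(1)}(t_c)\geq q_j^{(2)}(t_c)$, yields
\begin{align}
\bp{q_j^{(1)}-q_j^{(2)}} + \sum_{i\in\partial_+ j}\bp{\overline{\phi}_{ij,\epsilon}^{(1)}-\overline{\phi}_{ij}^{(2)}} - \sum_{k\in\partial_- j}\bp{\underline{\phi}_{jk,\epsilon}^{(1)}-\underline{\phi}_{jk}^{(2)}} = 0,
\end{align}
which is a sum of nonnegative terms equal to zero. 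Hence every term vanishes, so all incident fluxes coincide; combined with the equal boundary densities and the \emph{strict} monotonicity of $f$ in its third argument, this forces the exact equalities $\partial_x w_{ij}(t_c,L_{ij})=0$ and $\partial_x w_{jk}(t_c,0)=0$.

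Finally I would close the argument using the parabolic structure together with the perturbation. Substituting the dissipation relation \eqref{perturbed2} into the perturbed continuity equation \eqref{perturbed1} and subtracting the corresponding unperturbed relation, then evaluating at the crossing point where $w=\partial_x w=0$, collapses the difference equation to $\partial_t w = (\partial_v f_{ij})\,\partial_{xx} w + \epsilon$ with $\partial_v f_{ij}>0$ (the zeroth- and first-order terms vanish precisely because $w$ and $\partial_x w$ do). Since $w(t_c,\cdot)\geq 0$ near the boundary with zero value and zero slope there, a second-order Taylor expansion gives $\partial_{xx} w\geq 0$ at that point, whence $\partial_t w(t_c,x_c)\geq\epsilon>0$. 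On the other hand, \eqref{eq:positive_derivative} in Definition~\ref{def:first_crossing} forces $w(t,x_c)<0$ for $t$ just above $t_c$ while $w(t_c,x_c)=0$, so $\partial_t w(t_c,x_c)\leq 0$, a contradiction that rules out the crossing. I expect the main obstacle to be the flow-balance rigidity step: it is the only place where the vertex hypothesis $q_j^{(1)}\geq q_j^{(2)}$ and the coupling across all incident edges (via Lemma~\ref{lem:nodal_cross_equiv}) enter, and it is essential that \emph{strict} monotonicity upgrades the one-sided inequalities to exact vanishing of $\partial_x w$, which is what lets the positive source $\epsilon$ dominate the sign of $\partial_{xx} w$ and produce the contradiction.
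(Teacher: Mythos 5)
Your proof is correct and uses the same ingredients as the paper's: Lemma~\ref{lem:nodal_cross_equiv} to propagate the crossing to every half-edge incident to $j$, the signs of the one-sided spatial derivatives of the difference at the vertex, strict monotonicity of $f_{ij}$ in its last argument to convert those into flux orderings, the Kirchhoff balance together with $q_j^{(1)}\geq q_j^{(2)}$, and finally the interior $\epsilon$-perturbation argument. The only difference is organizational: the paper splits into the cases of derivative equality versus strict inequality at the crossing and refutes each separately, whereas you use the flow balance (a vanishing sum of nonnegative terms) to force exact derivative equality on every incident edge up front and then invoke the interior-type argument once --- a slightly tighter packaging of the same logic that also avoids the paper's unjustified-looking claim that strict inequality on one outgoing edge implies strict inequality on all of them.
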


\begin{proof}[Proof of Lemma~\ref{lem:perturbed_interior}]
Suppose for the sake of contradiction that there exists a first crossing point $(t_c, x_c)$ such that $0  < x_c < L_{ij}$. Then by Definition \ref{def:first_crossing},
\begin{align}
\rho_{ij, \epsilon}^{(1)}(t_c, x_c) &= \rho_{ij}^{(2)}(t_c, x_c), \label{cp_1} \\
\rho_{ij, \epsilon}^{(1)}(t_c, x) &\geq \rho_{ij}^{(2)}(t_c, x), \ x \in (0,L_{ij}). \label{cp_2}
\end{align}
By Assumption \ref{assumption}, the functions $\rho_{ij, \epsilon}^{(1)}(t_c, x)$ and $\rho_{ij}^{(2)}(t_c, x)$ and hence the function $g: (0,L_{ij}) \rightarrow \mathbb{R}$ given by
\begin{align} \label{eq:g_fun}
g(x) &= \rho_{ij, \epsilon}^{(1)}(t_c, x)  - \rho_{ij}^{(2)}(t_c, x)
\end{align}
is twice continuously differentiable. Combined with \eqref{cp_1}-\eqref{cp_2}, this means the function $g(.)$ must also satisfy
\begin{align}
\ppx{x}g(x_c) = 0, \quad \frac{\partial}{\partial x^2}g(x_c) \geq 0, \label{cp_3}
\end{align}
which in turn yields
\begin{align}
\partial_x \rho_{ij, \epsilon}^{(1)}(t_c, x_c) &= \partial_x\rho_{ij}^{(2)}(t_c, x_c), \label{eq:cp_equality} \\
 \partial_x^2 \rho_{ij, \epsilon}^{(1)}(t_c, x_c) &\geq \partial_x^2 \rho_{ij}^{(2)}(t_c, x_c). \label{eq:cp_ineq1}
\end{align}
See Figure~\ref{fig:crossing} for a pictorial interpretation of the relations \eqref{eq:cp_equality} and \eqref{eq:cp_ineq1}.

\begin{figure}[t]
\centering
\includegraphics[width=1.0\linewidth]{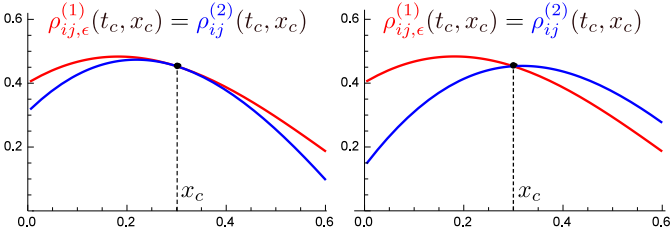} \vspace{-2ex} \caption{Left panel: example of a first crossing point at $x_c$. Right panel: example of a crossing point at $x_c$ that is not a first crossing point.  Relations \eqref{eq:cp_equality} and \eqref{eq:cp_ineq1} are satisfied in the left panel but not in the right panel.
} \label{fig:crossing} \vspace{2ex}
\end{figure}

Further, \eqref{eq:positive_derivative} implies that $\partial_t \rho_{ij, \epsilon}^{(1)}(t_c, x_c) \leq \partial_t \rho_{ij}^{(2)}(t_c, x_c)$, so that applying the continuity equation \eqref{eq:in_continuity} and its perturbed version \eqref{perturbed1}, we obtain
\begin{align}
-  \pp_x\phi_{ij,\epsilon}^{(1)}(t_c,x_c)  + \epsilon \leq - \pp_x\phi_{ij}^{(2)}(t_c,x_c). \label{eq:pert_comp}
\end{align}
We substitute for the flow terms in \eqref{eq:pert_comp} using the dissipation equation \eqref{eq:in_dissipation_eq} and its perturbed counterpart \eqref{perturbed2} to obtain the relation
\begin{align}
 \partial_x &f_{ij}(t_c,\rho_{ij, \epsilon}^{(1)}(t,x_{c}), \partial_{x}\rho_{ij, \epsilon}^{(1)}(t,x_{c})) \nonumber \\
   &\leq \partial_x  f_{ij}(t,\rho_{ij}^{(2)}(t,x_{c}), \partial_{x}\rho_{ij}^{(2)}(t,x_{c})) - \epsilon \nonumber \\
   &<  \partial_x  f_{ij}(t,\rho_{ij}^{(2)}(t,x_{c}), \partial_{x}\rho_{ij}^{(2)}(t,x_{c})). \label{eq:cp_ineq2}
\end{align}
Using the chain rule for differentiation, we can rewrite for $k = 1,2$,
\begin{align}
\!\!\! \partial_x &f_{ij}(t_c,\rho_{ij}^{(k)}(t,x_{c}), \partial_{x}\rho_{ij}^{(k)}(t,x_{c}))  \nonumber \\
\!\!\! = & \,\partial_u f_{ij}(t_c,\rho_{ij}^{(k)}(t,x_{c}), \partial_{x}\rho_{ij}^{(k)}(t,x_{c})) \partial_x \rho_{ij}^{(k)}(t,x_{c}) \nonumber\\
\!\!\!  &+  \partial_v f_{ij}(t_c,\rho_{ij}^{(k)}(t,x_{c}), \partial_{x}\rho_{ij}^{(k)}(t,x_{c})) \partial_x^2 \rho_{ij}^{(k)}(t,x_{c}). \label{eq:fx_ineq0}
\end{align}
Substituting \eqref{eq:fx_ineq0} into \eqref{eq:cp_ineq2} and using \eqref{cp_1} and \eqref{eq:cp_equality}, we get
\begin{align}
\!\!\!\!\!\!\!& \partial_v f_{ij}(t_c,\rho_{ij, \epsilon}^{(1)}(t,x_{c}), \partial_{x}\rho_{ij, \epsilon}^{(1)}(t,x_{c})) \partial_x^2 \rho_{ij, \epsilon}^{(1)}(t,x_{c})  \nonumber\\
   & \quad < \partial_v f_{ij}(t_c,\rho_{ij}^{(2)}(t,x_{c}), \partial_{x}\rho_{ij}^{(2)}(t,x_{c})) \partial_x^2 \rho_{ij}^{(2)}(t,x_{c}). \label{eq:fv_comp1}
\end{align}
Because the dissipation function $f_{ij}(t,u,v)$ is strictly increasing in the third argument $v$, and recalling the equivalence relations \eqref{cp_1} and \eqref{eq:cp_equality}, we have
\begin{align}
\!\!\!\! &\partial_v f_{ij}(t_c,\rho_{ij, \epsilon}^{(1)}(t,x_{c}), \partial_{x}\rho_{ij, \epsilon}^{(1)}(t,x_{c})) \nonumber \\
 & \quad =  \partial_v f_{ij}(t_c,\rho_{c}^{(2)}(t,x_{c}), \partial_{x}\rho_{ij}^{(2)}(t,x_{c})) > 0. \label{eq:fv_comp2}
\end{align}
Finally, the equality and positivity of $\partial_v f_{ij}$ terms in \eqref{eq:fv_comp2} can be used to simplify \eqref{eq:fv_comp1} to yield the simple strict inequality
\begin{align}
\partial_x^2 \rho_{ij}^{(1)}(t,x_{ij}) < \partial_x^2 \rho_{ij}^{(2)}(t,x_{ij}).
\end{align}
This contradicts \eqref{eq:cp_ineq1}, and hence our assumption must be incorrect and the proof of the lemma is complete.
\end{proof}


\begin{proof}[Proof of Lemma~\ref{lem:nodal_cross_equiv}]
Recall that by the compatibility constraints \eqref{eq:in_pressure_comp}, we have for any $k \in \partial_- j$ that $\rho_{jk}(t_c, 0) = \underline{\alpha}_{jk} (t_c,\rho_{j}(t_c))$ 
and for any $i \in \partial_+ j$ that $\rho_{ij}(t_c, L_{ij}) = \overline{\alpha}_{ij} (t_c,\rho_{j}(t_c))$.  Here $\underline{\alpha}_{jk}(t_c,\rho)$ and $\overline{\alpha}_{ij}(t_c,\rho)$ are invertible functions of $\rho$ for all $(i,j),(j,k)\in\cE$ and $\rho>0$, because we have assumed that the compatibility functions are strictly increasing for positive values in the second argument.  Let us then denote by $\underline{\alpha}_{jk,t}^{-1}(\cdot)$ and $\overline{\alpha}_{ij,t}^{-1}(\cdot)$ the inverses of the corresponding functions at the time $t$.  Then, for any $i\in\partial_+ j$ and $k\in\partial_- j$, we have the relations
\begin{align}
\rho_{ij}(t_c, L_{ij})& = \overline{\alpha}_{ij} (t_c,\underline{\alpha}_{jk,t_c}^{-1}(\rho_{jk}(t_c, 0))), \label{eq:bij1}\\
\rho_{jk}(t_c, 0) &= \underline{\alpha}_{jk} (t_c,\overline{\alpha}_{ij,t_c}^{-1}(\rho_{ij}(t_c, L_{ij}))), \label{eq:bij2}
\end{align}
where $\overline{\alpha}_{ij} (t,\underline{\alpha}_{jk,t}^{-1}(\cdot))$ and $\underline{\alpha}_{jk} (t,\overline{\alpha}_{ij,t}^{-1}(\cdot))$ are compositions of invertible increasing functions and therefore bijective and increasing.  As a result, by Definition \ref{def:first_crossing}, the existence of a crossing  point at $(t_c,x_{jk}=0)$ for some $k\in\partial_- j$ implies that there is also a crossing point at $(t_c, x_{jk} = 0)$ for all $k \in \partial_- j$ and also at $(t_c, x_{ij} = L_{ij})$ for all $i \in \partial_+ j$.
\end{proof}

\begin{proof}[Proof of Lemma~\ref{lem:perturbed_vertex}]
We again seek to reach a contradiction by starting with the assumption that there exists a first crossing point $(t_c,x_c)$ for some $t_c \in [0,T]$ between the quantities  $\rho_{jk, \epsilon}^{(1)}(t, x_{jk})$ and
 $\rho_{jk}^{(2)}(t, x_{jk})$ for some  $k \in \partial_- j$ and $x_{jk}=x_c = 0$. By the definition of first crossing point we must have
\begin{align}
\rho_{jk, \epsilon}^{(1)}(t_c, 0) &= \rho_{jk}^{(2)}(t_c, 0), \label{cpv_1} \\
\rho_{jk, \epsilon}^{(1)}(t_c, x) &\geq \rho_{jk}^{(2)}(t_c, x), \ x \in [0,L_{jk}]. \label{cpv_2}
\end{align}
We then can apply a similar argument as that used in the proof of Lemma \ref{lem:perturbed_interior}.  Because $\rho_{jk}^{(1)}(t_c, x)  - \rho_{jk}^{(2)}(t_c, x)$ is twice continuously differentiable,
one of the following options must be true.
\begin{itemize}
\item \emph{Option 1:}
\begin{align}
\partial_x \rho_{jk, \epsilon}^{(1)}(t_c, 0) &= \partial_x\rho_{jk}^{(2)}(t_c, 0),  \\
 \partial_x^2 \rho_{jk, \epsilon}^{(1)}(t_c, 0) &\geq \partial_x^2 \rho_{jk}^{(2)}(t_c, 0).
\end{align}
\item \emph{Option 2:}
\begin{align}
\partial_x \rho_{jk, \epsilon}^{(1)}(t_c, 0) &> \partial_x\rho_{jk}^{(2)}(t_c, 0). \label{eq:cpv_inequality}
\end{align}
\end{itemize}

By following the exact same argument as in the proof of Lemma~\ref{lem:perturbed_interior},  we can show that \emph{Option 1} leads to a contradiction. What remains is to prove that \emph{Option 2} is also disallowed.  Because $f_{jk}(t,u,v)$ is strictly increasing in $v$, using \eqref{cpv_1} and \eqref{eq:cpv_inequality} we see that
\begin{align}
f_{jk}(t_c, \rho_{jk}^{(1)}(t_c,0), &\partial_x \rho_{jk}^{(1)}(t_c,0))  \nonumber\\
& > f_{jk}(t_c, \rho_{jk}^{(2)}(t_c,0), \partial_x \rho_{jk}^{(2)}(t_c,0)). \label{eq:pert_ineq1}
\end{align}
Applying Lemma \ref{lem:nodal_cross_equiv} to edges outgoing from node $j$ we find that \eqref{cpv_1} and \eqref{eq:cpv_inequality} hold for all $k \in \partial_- j$, hence so does \eqref{eq:pert_ineq1}.  Combining with the dissipation equation \eqref{eq:in_dissipation_eq}, this gives for all $k \in \partial_- j$  that $\phi_{jk}^{(1)}(t_c, 0) < \phi_{jk}^{(2)}(t_c, 0)$.  Similarly, Lemma \ref{lem:nodal_cross_equiv} implies that the relations $\rho_{ij, \epsilon}^{(1)}(t_c, L_{ij}) = \rho_{ij}^{(2)}(t_c, L_{ij})$ and $\partial_x \rho_{ij, \epsilon}^{(1)}(t_c, L_{ij}) \leq \partial_x\rho_{ij}^{(2)}(t_c, L_{ij})$ must hold for all $i\in\partial_+ j$, and hence $\phi_{ij}^{(1)}(t_c, L_{ij}) \geq \phi_{ij}^{(2)}(t_c, L_{ij})$ hold for all $i \in \partial_+ j$ as well.  We then apply the flow conservation equation \eqref{eq:in_nodal_continuity} to obtain
\begin{align}
q^{(1)}(t_c) &= \sum_{k \in \partial_- j} \phi_{jk}^{(1)}(t_c, 0) -  \sum_{i \in \partial_+ j} \phi_{ij}^{(1)}(t_c, L_{ij}) \nonumber \\
& <   \sum_{k \in \partial_- j} \phi_{jk}^{(2)}(t_c, 0) -  \sum_{i \in \partial_+ j} \phi_{ij}^{(2)}(t_c, L_{ij})  \nonumber \\
&= q^{(2)}(t_c).
\end{align}
The last statement is in contradiction with the assumptions of Lemma~\ref{lem:perturbed_vertex}.
\end{proof}

Before proceeding to the proof of Theorem~\ref{thm:pde} we state one last lemma that relates the solution of the perturbed system in  \eqref{perturbed1}-\eqref{perturbed2}  to the original system.
\begin{lemma} \label{rem:perturbed_ordering}
The solution to the perturbed system $\rho_{ij,\epsilon}^{(1)}(t,x_{ij})$ is always greater than or equal to the solution $\rho_{ij}^{(1)}(t,x_{ij})$ of the original system for all $t \in [0,T]$.
\end{lemma}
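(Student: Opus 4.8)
The plan is to prove the claim by the same first-crossing-point technique developed in Lemmas~\ref{lem:perturbed_interior}--\ref{lem:perturbed_vertex}, exploiting that the two solutions being compared are driven by the \emph{same} injection profile $q^{(1)}$ while the perturbation supplies the strict advantage that precludes any crossing. First I would record that at the initial time the perturbed solution strictly dominates: by the perturbed initial data \eqref{eq:in_initial_condition_pert}, $\rho_{ij,\epsilon}^{(1)}(0,x_{ij}) = \rho_{ij}^{0}(x_{ij}) + \epsilon \geq \rho_{ij}^{0}(x_{ij}) = \rho_{ij}^{(1)}(0,x_{ij})$ for every $(i,j)\in\cE$ and $x_{ij}\in I_{ij}$. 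By Definition~\ref{def:first_crossing} and the remark following it, it then suffices to show that no first crossing point can develop between $\rho_{ij,\epsilon}^{(1)}$ and the unperturbed solution $\rho_{ij}^{(1)}$ on $[0,T]$; the absence of such a point forces the initial ordering to persist for all $t\in[0,T]$.

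The key observation is that Lemmas~\ref{lem:perturbed_interior} and \ref{lem:perturbed_vertex} compare the perturbed solution $\rho_{ij,\epsilon}^{(1)}$ against \emph{any} classical solution of the unperturbed system whose initial data it dominates (and, for the vertex lemma, whose injections satisfy $q^{(1)}\geq q^{(2)}$). I would therefore instantiate those two lemmas with the unperturbed solution $\rho_{ij}^{(1)}$ itself playing the role of the second solution, setting $q^{(2)}:=q^{(1)}$. The initial-data hypothesis $\rho_{ij}^{(1)}(0,x_{ij})\geq\rho_{ij}^{(2)}(0,x_{ij})$ then holds with equality, and the injection hypothesis $q_j^{(1)}(t)\geq q_j^{(2)}(t)$ holds with equality as well, so both lemmas apply directly. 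A candidate crossing location $x_c$ lies either in the interior $(0,L_{ij})$ of some edge, which is excluded by Lemma~\ref{lem:perturbed_interior}, or at an edge endpoint, i.e. a vertex, which is excluded by Lemma~\ref{lem:perturbed_vertex} (with Lemma~\ref{lem:nodal_cross_equiv} ensuring that a vertex crossing manifests simultaneously at all adjacent edge boundaries). These cases are exhaustive, so no first crossing point exists and the asserted ordering $\rho_{ij,\epsilon}^{(1)}(t,x_{ij})\geq\rho_{ij}^{(1)}(t,x_{ij})$ holds for all $t\in[0,T]$.

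The point requiring care, and what I expect to be the main obstacle, is to confirm that this reduction is not vacuous even though the injections and, after subtracting $\epsilon$, the initial profiles coincide. Here the perturbation does the essential work: in the proof of Lemma~\ref{lem:perturbed_interior} the strict inequality producing the contradiction, namely \eqref{eq:cp_ineq2}, arises precisely from the $-\epsilon$ term of the perturbed continuity equation \eqref{perturbed1}, and not from any difference in injections or initial data. In the vertex case the same interior argument disposes of the first sub-option, while the flow-balance step yields the strict contradiction $q^{(1)}(t_c)<q^{(2)}(t_c)$, which under $q^{(2)}=q^{(1)}$ reads $q^{(1)}(t_c)<q^{(1)}(t_c)$ and remains a genuine contradiction. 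I would also verify that the perturbed solution continues to obey the nodal balance \eqref{eq:in_nodal_continuity} and the compatibility conditions \eqref{eq:in_pressure_comp} with injection $q^{(1)}$, since these are inherited from the way the perturbed initial boundary value problem is posed in Assumption~\ref{assumption}(iv) and are needed for the vertex flow-balance step to be valid.
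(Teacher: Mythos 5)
Your proposal is correct and follows essentially the same route as the paper: the paper's proof likewise observes that Lemmas~\ref{lem:perturbed_interior} and~\ref{lem:perturbed_vertex} apply verbatim with $\rho_{ij}^{(2)}$ replaced by $\rho_{ij}^{(1)}$ (and hence $q^{(2)}=q^{(1)}$), so no first crossing point can occur and the initial ordering persists on $[0,T]$. Your additional remarks --- that the strict inequality comes from the $-\epsilon$ term rather than from any gap in the data, and that the vertex contradiction $q^{(1)}(t_c)<q^{(1)}(t_c)$ survives the degenerate instantiation --- are correct elaborations of details the paper leaves implicit.
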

\begin{proof}
We observe that all assumptions in Lemma~\ref{lem:perturbed_interior} and Lemma~\ref{lem:perturbed_vertex} are satisfied if we replace $\rho_{ij}^{(2)}(t,x_{ij})$ by  $\rho_{ij}^{(1)}(t,x_{ij})$.
As a consequence, there can be no first crossing point between $\rho_{ij,\epsilon}^{(1)}(t,x_{ij})$ and $\rho_{ij}^{(1)}(t,x_{ij})$. Therefore, for all $t \in [0,T]$ we must have $\rho_{ij,\epsilon}^{(1)}(t,x_{ij}) \geq \rho_{ij}^{(1)}(t,x_{ij})$.
\end{proof}
\vspace{0.1in}

\subsection{Proof of Theorem~\ref{thm:pde}}
\begin{proof}
Fix an $\epsilon > 0$.  Let $\rho_{ij, \epsilon}^{(1)}(t,x_{ij})$ be the solution to the perturbed system in  \eqref{perturbed1}-\eqref{perturbed2}
with nodal input parameters set at $q_{i}^{(1)}(t)$. Then by Lemma~\ref{lem:perturbed_interior}, there can be no first crossing point between $\rho_{ij, \epsilon}^{(1)}(t,x_{ij})$ and $\rho_{ij}^{(2)}(t,x_{ij})$ such that $t_c \in [0,t_0]$ and $0 < x_{ij} < L_{ij}$
for some $(i,j) \in \cE$. The above statement is also true by Lemma~\ref{lem:perturbed_vertex} for $i \in \mathcal{S}$.
Further, by Lemma~\ref{rem:perturbed_ordering}, we have that $\rho_{i}^{(1)}(t) \geq \rho_{i}^{(2)}(t)$ implies $\rho_{i, \epsilon}^{(1)}(t) \geq \rho_{i}^{(2)}(t)$ and hence there is no crossing point at $i \notin \mathcal{S}$. This means that, for all $t \in [0,t_0]$, we have
\begin{align}
\rho_{ij, \epsilon}^{(1)}(t,x_{ij}) \geq \rho_{ij}^{(2)}(t,x_{ij}), \label{eq:main_ineq_a}
\end{align}
for all $(i,j) \in \cE$ and $x_{ij} \in I_{ij}$. Because $\epsilon > 0$ was chosen arbitrarily, we can take the limit $\epsilon \rightarrow 0$ in \eqref{eq:main_ineq_a}, and using Assumption~\ref{assumption}-(iv), we get
\begin{align}
\rho_{ij}^{(1)}(t,x_{ij}) \geq \rho_{ij}^{(2)}(t,x_{ij}), \label{eq:main_ineq}
\end{align}
This completes the proof of the Main Theorem.
\end{proof}

\section{Proof of Theorem~\ref{thm:ode}}  \label{sec:proof_ode}
In this section we present a proof that the monotone order propagation property is preserved for systems of the form in equations \eqref{eq:in_continuity}-\eqref{eq:in_pressure_comp} when they are discretized through the procedure presented in Section \ref{sec:discrete_dynamics}.  Our proof relies on an application of the theory of monotone control systems \cite{angeli03} to the system of ODEs that describe the nodal density dynamics in \eqref{disceq3}. Denoting the vector of densities by $\mathbf{\rho} = \left(\rho_i \right)_{i \in S}$  and the vector of injections by $\mathbf{q} = (q_i)_{i\in S}$ for the nodes in $S$, we can rewrite the system of ODEs in \eqref{disceq3} as
\begin{align}
\dot{\mathbf{\rho}} = F(\mathbf{\rho},\mathbf{q}), \label{eq:ode_vector_form}
\end{align}
where $F_i(\mathbf{\rho},\mathbf{q})$ is defines as the right hand side of \eqref{disceq3}. 

\begin{definition}
A matrix $A \in \mathbbm{R}^{n \times n}$ is called \emph{non-negative} if all of its entries are non-negative
\end{definition}

\begin{definition}
A matrix $A \in \mathbbm{R}^{n \times n}$ is called \emph{Metzler} if all of its off-diagonal entries are non-negative, i.e.,, $A_{ij} \geq 0$ for all $i \neq j$.
\end{definition}

\begin{proposition} \label{thm:kamke_muller}
Suppose that the system of ODEs in \eqref{eq:ode_vector_form} is such that the matrix $\nabla_{\mathbf{q}} F$ is non-negative and $\nabla_{\mathbf{\rho}} F$ is Metzler, then the conclusion in Theorem~\ref{thm:ode} holds.
\end{proposition}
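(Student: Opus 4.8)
The plan is to read the two sign hypotheses as exactly the Kamke--Müller conditions that make \eqref{eq:ode_vector_form} a cooperative (monotone) control system, and then to establish the order-preservation conclusion of Theorem~\ref{thm:ode} through a first-crossing comparison argument. First I would fix the state/input decomposition imposed by Theorem~\ref{thm:ode}: the densities $(\rho_i)_{i\in\mathcal{S}}$ are the genuine ODE states, while the injections $(q_i)_{i\in\mathcal{S}}$ and the prescribed densities $(\rho_i)_{i\in\cV\setminus\mathcal{S}}$ together form an exogenous input that is ordered componentwise, $u^{(1)}(t)\geq u^{(2)}(t)$, by the premises of the theorem. The sign hypotheses then supply the full monotone structure: the non-negativity of $\nabla_{\mathbf{q}}F$ makes the state dynamics order-preserving in the injections, while the off-diagonal entries of the Metzler matrix $\nabla_{\mathbf{\rho}}F$ provide both the quasimonotonicity of the state dynamics and the non-negative sensitivity of each $\mathcal{S}$-state to the prescribed neighbor densities on $\cV\setminus\mathcal{S}$.

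Next I would introduce the difference $z_i(t)=\rho_i^{(1)}(t)-\rho_i^{(2)}(t)$. For $i\in\cV\setminus\mathcal{S}$ we have $z_i\geq 0$ for all $t$ by hypothesis, so only the indices $i\in\mathcal{S}$ require work, with $z_i(0)\geq 0$ there. To force strict inequalities I would run the argument on a perturbed trajectory, mirroring the device used in the proof of Theorem~\ref{thm:pde}: let $\rho^{(1)}_\delta$ solve $\dot\rho=F(\rho,u^{(1)})+\delta\mathbf{1}$ with $\rho^{(1)}_\delta(0)=\rho^{(1)}(0)+\delta\mathbf{1}$, and set $z_{i,\delta}=\rho^{(1)}_{\delta,i}-\rho_i^{(2)}$, so that $z_{i,\delta}(0)\geq\delta>0$ on $\mathcal{S}$. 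Suppose some $\mathcal{S}$-component dips to zero and let $t_1$ be the first such time, at which $z_{i,\delta}(t_1)=0$, $z_{j,\delta}(t_1)\geq 0$ for all $j$, and hence $\dot z_{i,\delta}(t_1)\leq 0$. Splitting
\begin{align*}
\dot z_{i,\delta} &= \big[F_i(\rho^{(1)}_\delta,u^{(1)})-F_i(\rho^{(2)},u^{(1)})\big] \\
&\quad + \big[F_i(\rho^{(2)},u^{(1)})-F_i(\rho^{(2)},u^{(2)})\big] + \delta,
\end{align*}
the second bracket is non-negative because $u^{(1)}\geq u^{(2)}$ and $\nabla_{\mathbf{q}}F\geq 0$; writing the first bracket via the fundamental theorem of calculus as $\int_0^1\sum_j \partial_{\rho_j}F_i(\rho^{(2)}+sz_\delta,u^{(1)})\,z_{j,\delta}\,ds$, the diagonal $j=i$ term drops out since $z_{i,\delta}(t_1)=0$, while every off-diagonal term is a product of a non-negative Metzler entry with a non-negative $z_{j,\delta}(t_1)$. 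Thus $\dot z_{i,\delta}(t_1)\geq\delta>0$, contradicting $\dot z_{i,\delta}(t_1)\leq 0$.

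The crux of the argument, and the main obstacle, is precisely this degeneracy at a touching point: without the perturbation one obtains only $\dot z_i(t_1)\geq 0$, which does not exclude a downward crossing. The strict perturbation $\delta>0$ converts this into $\dot z_{i,\delta}(t_1)\geq\delta>0$ and thereby rules out any first crossing, giving $\rho^{(1)}_\delta(t)\geq\rho^{(2)}(t)$ on $[0,T]$. Letting $\delta\to 0$ and invoking continuous dependence of the ODE solution on its initial data and right-hand side then yields $\rho_i^{(1)}(t)\geq\rho_i^{(2)}(t)$ for all $i\in\cV$ and all $t\in[0,T]$, which is the conclusion of Theorem~\ref{thm:ode}. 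Equivalently, once the sign structure is identified, one may simply invoke the comparison theorem for cooperative systems under the Kamke conditions \cite{kamke32,hirsch05} or the monotone control systems framework of \cite{angeli03}, since the hypotheses verified here are exactly theirs.
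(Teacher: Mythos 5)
Your proposal is correct and follows essentially the same route as the paper, which simply cites the Kamke--M\"uller conditions from the monotone control systems literature \cite{kamke32,angeli03,hirsch05} without further argument; your first-crossing comparison with the $\delta$-perturbation is precisely the standard proof of that cited comparison theorem, with the state/input decomposition (states on $\mathcal{S}$, inputs comprising the injections and the prescribed densities on $\cV\setminus\mathcal{S}$) identified correctly. The only thing worth noting is that your version is self-contained where the paper's is a black-box citation, so nothing is missing.
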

\begin{proof}
The proof follows from well-known results in the theory of monotone control systems as an application of the Kamke-M\"uller conditions \cite{kamke32,angeli03,hirsch05}.
\end{proof}

To prove Theorem~\ref{thm:ode}, what remains is to show that the conditions in Proposition~\ref{thm:kamke_muller} hold. 
For $i \neq j$ we have $\left[\nabla_{\mathbf{q}}\right]_{ij} = 0$. For the diagonal elements, we compute
\begin{align}
\left[\nabla_{\mathbf{q}} F\right]_{jj}= \frac{2}{\eP} \left(\frac{\partial}{\partial \rho}\alpha_j(t,\rho_j) \right)^{-1} \stackrel{(a)}{>} 0,
\end{align}
where $(a)$ follows by using \eqref{eq:alpha_def} to get $\alpha_j(t,\rho_j) = \sum_{i\in\partial_+j} \overline{\alpha}_{ij}(t,\rho_j) + \sum_{k\in\partial_-j} \underline{\alpha}_{jk}(t,\rho_j)$ and observing that each of the functions $\overline{\alpha}_{ij}(t,\rho_j)$ and $\underline{\alpha}_{jk}(t,\rho_j)$ is monotonically increasing in $\rho_j$. This shows that $\nabla_{\mathbf{q}} F$ is non-negative.

If $i \notin \partial_+ j \cup \partial_- j$ then $\left[\nabla_{\mathbf{\rho}}F\right]_{ij} = 0$. Let $i \in \partial_+ j$. Then
\begin{align}
\left[\nabla_{\mathbf{\rho}}F\right]_{ij} &= \frac{2}{\eP} \left(\frac{\partial}{\partial \rho}\alpha_j(t,\rho_j) \right)^{-1} \frac{1}{\eP}\frac{\partial}{\partial \rho} \underline{\alpha}_{ij}(t,\rho_i) \times \nonumber \\
& h_{\mu(ij)}\left(t,\overline{\alpha}_{ij}(t,\rho_j), \frac{1}{\eP}(\overline{\alpha}_{ij}(t,\rho_j)-\underline{\alpha}_{ij}(t,\rho_i)) \right),
\end{align}
where $h_{\mu(ij)}(t,u,v) = \frac{\partial}{\partial v} f_{\mu(ij)}(t,u,v) > 0$, and where the inequality follows from the assumption of Theorem~\ref{thm:ode}.  It follows that $\left[\nabla_{\mathbf{\rho}}F\right]_{ij} \geq 0$, and a similar computation shows that for $k \in \partial_- j$, the inequality $\left[ \nabla_{\mathbf{\rho}}F \right]_{jk} \geq 0$ holds as well. This proves that $\nabla_{\mathbf{\rho}}F$ is Metzler, and the proof of Theorem~\ref{thm:ode} follows by using Proposition~\ref{thm:kamke_muller}.

We note that the result established above holds for the specific model of natural gas networks.  In particular, a result similar to Theorem~\ref{thm:ode} has been applied to the case of ideal gas modeling in the context of optimal state estimation \cite{sundar2018state}.

\anatoly{
\section*{Acknowledgements} This study was conducted at Los Alamos National Laboratory under the auspices of the National Nuclear Security Administration of the U.S. Department of Energy under Contract No. 89233218CNA000001. Support was provided by the Advanced Research Projects Agency-Energy (ARPA-e) of the U.S. Department of Energy under Award No. DE-AR0000673, the Advanced Grid Research and Development program of the D.O.E. Office of Electricity, and the Laboratory Directed Research \& Development program at Los Alamos National Laboratory.  The authors gratefully acknowledge the assistance of the Gas Pipeline Group at Kinder Morgan corporation, who contributed the data set used in the computational analysis. 

}

\bibliographystyle{unsrt}
\bibliography{gas_master,monotonicity_master,cdc_gas_network}

\end{document}